\DeclareSymbolFontAlphabet{\mathbb}{AMSb}
\DeclareSymbolFontAlphabet{\mathbbl}{bbold}
\newtheorem{theorem}{Theorem}[section]
\newtheorem{proposition}[theorem]{Proposition}
\newtheorem{corollary}[theorem]{Corollary}
\newtheorem{lemma}[theorem]{Lemma}
\theoremstyle{definition} 
\newtheorem{defn}[theorem]{Definition}
\newtheorem{remark}[theorem]{Remark}
\newtheorem{question}[theorem]{Question}
\newcommand{\cO}{\mathcal{O}}
\newcommand{\lie}[1]{\ensuremath{\mathfrak{#1}}}
\newcommand{\g}{{\lie{g}}}
\newcommand{\R}{\bb{R}}
\newcommand{\qu}{/\kern-.7ex/}
\newcommand{\lqu}{\backslash \kern-.7ex \backslash}%left
\newcommand{\on}{\operatorname} 
\newcommand{\Aut}{\on{Aut}}
\newcommand{\Hom}{\on{Hom}}
\title{Gromov--Witten theory beyond maximal contacts}
\author{Yu Wang}
\address{Department of Mathematics \\ Louisiana State University \\303 Lockett Hall \\ Baton Rouge \\ LA \\ United States of America \\ 70803}
\email{yuwang@lsu.edu}
\author{Fenglong You}
\address{Department of Mathematics \\ ETH Z\"urich, \\Rämistrasse 101, \\8092 Zürich, \\Switzerland}
\email{fenglong.you@math.ethz.ch}
\address{School of Mathmatical Sciences \\ University of Nottingham, \\University Park \\Nottingham, NG7 2RD \\United Kingdom.}
\email{fenglong.you@nottingham.ac.uk}
\thanks{}
\keywords{}
\begin{document}
\date{\today}

\begin{abstract} 
Given a smooth projective variety $X$ and a smooth nef divisor $D$, we identify genus zero relative Gromov--Witten invariants of $(X,D)$ with $(n+1)$ relative markings with genus zero orbifold Gromov--Witten invariants of multi-root stacks over the $\mathbb P^1$-bundle $P:=\mathbb P(\mathcal O_X(-D)\oplus \mathcal O_X)$ with $n$ orbifold markings. This is a generalization of the local-relative correspondence beyond maximal contacts. Repeating this process, we identify genus zero relative Gromov--Witten invariants of ambient insertions with genus zero absolute Gromov--Witten invariants of toric bundles. 
We also present how this correspondence can be used to compute genus zero two-point relative Gromov--Witten invariants. 
\end{abstract}

\maketitle 

\tableofcontents

\section{Introduction}

\subsection{The degeneration scheme}

Relative Gromov--Witten theory was introduced in \cite{LR}, \cite{IP}, \cite{Li1}, \cite{Li2} in order to apply the degeneration technique to study Gromov--Witten theory. When the central fiber of the degeneration has two irreducible components glued along a smooth divisor, the degeneration formula relates absolute Gromov--Witten invariants with relative Gromov--Witten invariants of the degenerated pieces. Relative Gromov--Witten invariants that appear in the degeneration formula are usually with several relative markings with possibly primitive insertions. There have been tremendous advances in understanding Gromov--Witten theory via degenerations. For example, the degeneration scheme proposed by \cite{MP}. However, as far as we know, computing invariants with several relative markings in general, even in genus zero, is still very difficult.

The simplest type of relative invariants are genus zero relative invariants with one relative marking. These invariants can be computed via the relative $I$-functions in the relative mirror theorem \cite{FTY}*{Theorem 1.4} when the divisor is nef. 

For many genus zero relative invariants with several relative markings, in principle, one can also compute them via the extended relative $I$-functions in \cite{FTY}*{Theorem 1.5}. An example of such computation is presented in \cite{You22}, where the author computed the proper Landau--Ginzburg potentials, which are generating functions of two-point relative Gromov--Witten invariants of smooth log Calabi--Yau pairs. It can be seen in \cite{You22} that the computation is much more complicated than the computation of one-point relative Gromov--Witten invariants. The computation in \cite{You22} is not just applying the relative mirror theorem in \cite{FTY}*{Theorem 1.5} for extended $I$-functions, one also needs to understand the extended relative mirror maps and prove several non-trivial identities for relative Gromov--Witten invariants. The computation for more general two-point relative invariants or relative invariants with more than two relative markings is even more complicated. 
%Moreover, the relative mirror theorem of \cite{FTY} only involves relative invariants with ambient insertions at the relative markings.    

There can not be a general and effective degeneration scheme without understanding relative Gromov--Witten invariants with more than one relative marking. The goal of this article is to determine genus zero relative Gromov--Witten invariants with several relative markings effectively.

The following invariants are related to relative invariants and can be used to study relative invariants. Here we focus on the genus zero case:
\begin{enumerate}
    \item Local Gromov--Witten invariants when the divisor is nef: By \cite{vGGR}, they are relative invariants with maximal contact and ambient insertions.
    \item Orbifold Gromov--Witten invariants of root stacks: By \cite{ACW} and \cite{FWY}, they coincide with relative invariants when the root $r$ is taken to be sufficiently large. When the divisor is snc, the large $\vec r$-limits of orbifold Gromov--Witten invariants of multi-root stacks are defined in \cite{TY20c}. When the irreducible components of the divisor are nef, these orbifold invariants (with maximal contacts) coincide with local invariants (of higher ranks) by \cite{BNTY}.
    \item Logarithmic Gromov--Witten invariants defined by \cite{Kim10}, \cite{GS13}, \cite{Chen}, \cite{AC} and \cite{ACGS}: By \cite{AMW} they coincide with relative invariants when the divisor is smooth. But different from local and orbifold invariants by \cite{NR}, \cite{BNR22}, \cite{BNR24}.
    \item Naive logarithmic Gromov--Witten invariants for snc pairs: by \cite{BNR22} and \cite{BNR24}, they coincide with orbifold invariants.
\end{enumerate}
We give more detailed explanation of (1) and (2), which are invariants that we will study in this paper.

\subsection{The local-relative correspondence}

One way to study genus zero relative Gromov--Witten invariants is through the local-relative correspondence.

Let $X$ be a smooth projective variety and let $D\subset X$ be a smooth divisor. When $D$ is nef, genus zero relative Gromov--Witten invariants of $(X,D)$ with maximal contact orders are genus zero relative invariants with only one relative marking. It was first conjectured by Takahashi \cite{Takahashi} that genus zero maximal contact invariants of $\mathbb P^2$ relative to a smooth cubic $E$ are equal to local Gromov--Witten invariants of the total space of $K_{\mathbb P^2}$ (up to a constant factor and a sign). The $(\mathbb P^2,E)$-case was proved by Gathmann by explicit computation on both sides \cite{Gathmann03}. In general, maximal contact invariants are local invariants of the total space of the bundle $\mathcal O_{X}(-D)$ by the work of \cite{vGGR}. Some generalizations of \cite{vGGR} are proved in \cite{TY20b}, \cite{BFGW} and \cite{BNTY}. The relation between relative invariants with maximal contacts and local invariants is referred to as the local-relative correspondence or the local-log principle. Since, for snc pairs, local invariants and log invariants are different  in general \cite{NR}, we decided to use the term ``the local-relative correspondence'' instead of ``the local-log principle'' to avoid any confusion. 

Another way to study the relative Gromov--Witten theory is to consider it as a limit of the orbifold Gromov--Witten theory of root stacks $X_{D,r}$ for $r$ sufficiently large \cite{ACW}, \cite{TY18}, \cite{FWY}, \cite{TY20b}, \cite{TY20c}. We refer to this as the relative-orbifold correspondence. Through this correspondence, one-point invariants (with maximal contact) can also be computed via the relative mirror theorem \cite{FTY} when the divisor $D$ is nef. 

Compared with the local-relative correspondence, the relative-orbifold correspondence is not restricted to the maximal contact invariants, do not assume the nefness of the divisor $D$ and gives nice structure properties \cite{FWY}. On the other hand, the advantage of the local-relative correspondence is that the geometry of the bundle $\mathcal O_X(-D)$ is usually simpler than the orbifold $X_{D,r}$. For example, if $X$ is a toric variety, then $\mathcal O_X(-D)$ is still toric. But $X_{D,r}$ is not toric when $D$ is not a toric invariant divisor of $X$. Instead, $X_{D,r}$ is a hypersurface of a $\mathbb P^1[r]$-bundle over $X$ \cite{FTY}. In addition, the local-relative correspondence does not require ``the limiting process" as in the relative-orbifold correspondence. Despite the restriction to the maximal contact case, the above advantages of the local-relative correspondence have led to many interesting applications, e.g., the holomorphic anomaly equation for $(\mathbb P^2,E)$ in \cite{BFGW}.

Although so far the local-relative correspondence only applies to the maximal contact case, one can consider it as a motivating case of a more general correspondence. We can think of the local-relative correspondence as a way to reduce the number of relative markings from $1$ to $0$ and increase the target dimension by $1$. There are at least two natural questions that one can ask.

\begin{question}
    What is the geometrical reasoning for the local-relative correspondence? 
\end{question}

As far as we know, besides a heuristic argument for $(\mathbb P^2,E)$ at the end of \cite{Takahashi}, the remarkable observation of Takahashi \cite{Takahashi} was based on computation. There was no direct geometric motivation for the conjecture. The local-relative correspondence for general targets was proved by \cite{vGGR} by degeneration to the normal cone. One may think of the degeneration argument in \cite{vGGR} as a geometrical explanation of the local-relative correspondence. However, at least for us, it is not yet clear how local invariants can encode tangency conditions. If there is a way to encode tangency conditions for local theory, then why is it restricted to the maximal contact case? This leads to the second question.

\begin{question}
Is there such a relation beyond the maximal contact invariants? 
\end{question}

In other words, is it possible to reduce the relative Gromov--Witten theory beyond maximal contacts (that is, with several relative markings) to the (absolute) Gromov--Witten theory of certain higher rank toric bundles over $X$ and use this generalized correspondence to study structures of relative Gromov--Witten theory?

Recall that the local-relative correspondence can be stated as follows (we state it on the invariant level and assume that the virtual dimension constraint is satisfied.):

\begin{align}\label{iden-local-rel-1}
\langle [\iota^*\gamma]_{D\cdot \beta}\rangle_{0,1,\beta}^{(X,D)}=(-1)^{D\cdot \beta+1}\langle [\gamma\cup D]\rangle_{0,1,\beta}^{\mathcal O_X(-D)},
\end{align}
where $\iota:D\hookrightarrow X$ is the inclusion and $\gamma\in H^*(X)$. We remark that the original version of the local-relative correspondence was stated for the case when the relative marking of $(X,D)$ has insertion $[1]_{D\cdot\beta}$ and no marking for the local theory, but an additional factor of $D\cdot\beta$ coming from the divisor equation. As mentioned in \cite{TY20b}*{Equation (2)} and \cite{FW}*{Introduction}, it is not difficult to observe that the proof of the local-relative correspondence in \cite{vGGR} can be easily adapted to prove Identity (\ref{iden-local-rel-1}). The local Gromov--Witten invariant of $\mathcal O_{X}(-D)$ is a twisted Gromov--Witten theory of $X$ by the inverse Euler class of $ O_X(-D)$.  Without this twist, the virtual dimensions of two theories will not match. We refer to \cite{CG} for the definition of the twisted Gromov--Witten theory.

However, it does not seem to be a natural way for the local theory to encode the information of contact orders. For maximal contact invariants, there is only one relative marking and the contact order is fixed when the curve class is fixed. In this case, no serious issues arise.

We may think of the local-relative correspondence as a way to turn a relative marking in the relative Gromov--Witten theory into a relative marking of the local Gromov--Witten theory with contact order $0$. When there are several relative markings, we may want to turn some of the relative markings into relative markings with contact order zero (i.e. interior markings.) Then the question becomes: which divisor of $\mathcal O_{X}(-D)$ is the relative divisor? Unfortunately, for the local Gromov--Witten theory of the total space of the bundle $\mathcal O_{X}(-D)$, there is no natural choice of such a divisor. 

Here comes the key observation: the reason that we do not see the divisor is that the divisor is not in $\mathcal O_{X}(-D)$. The divisor is at infinity! 

We consider the following invariant:
\begin{align}\label{inv-P-0}
(-1)^{D\cdot \beta}\langle [\gamma\cup(X_{\infty}- \pi^*D)]_0\rangle_{0,1,\beta+0f}^{(P,X_{\infty}+ X_{\sigma})},
\end{align}
where $$\pi:P=\mathbb P(\mathcal O_X(-D)\oplus \mathcal O_X)\rightarrow X$$ is a $\mathbb P^1$-bundle over $X$; $X_{\infty}$ is the infinity divisor of $P$; $X_\sigma$ is the graph of the section $\sigma$ of $\mathcal O_X(D)$ that defines $D$, and $X_{\infty}\cap X_{\sigma}=D$; $f$ is the class of a fiber of the bundle $P\rightarrow X$; the invariant is the limiting orbifold invariant of the multi-root stack of $(P,X_{\infty}+ X_{\sigma})$ defined in \cite{TY20c}*{Definition 20}. Note that we have no fiber class here. Let $X_0$ be the zero divisor of $P$. Then the basic divisor relation in $H^2(P)$ is $X_\infty-\pi^*D=X_0$.

It can be proved by degeneration that the local invariant in (\ref{iden-local-rel-1}) is also equal to (\ref{inv-P-0}). The proof will be a special case of the proof of our main theorem. The set-up here is more subtle than the observation that we mentioned above, because we do not just consider tangency conditions along the infinity divisor $X_{\infty}$, but we also consider tangency conditions along another divisor $X_\sigma$. Intuitively, when the curves are in $X_\infty$, we will get the relative Gromov--Witten invariants of $(X_\infty,X_\infty\cap X_\sigma)=(X,D)$ instead of the absolute Gromov--Witten invariants of $X_\infty$.  

With this compactification and the choice of divisors, we are now able to encode some tangency conditions. As we will see later, for relative invariants with $2$ relative markings, the relation is the following:
\begin{align*}
&\langle [\iota^*\gamma_0]_{k_0},[\gamma_1]_{k_1}\rangle_{0,2,\beta}^{(X,D)}\\
=&(-1)^{k_0}\langle [\gamma_0\cup(X_\infty-\pi^*D)]_{(0,0)}, [\gamma_1]_{(k_1,k_1)} \rangle_{0,2,\beta+k_1f_1}^{(P,X_{\infty}+X_{\sigma})},
\end{align*}
where $k_0+k_1=D\cdot \beta$, $\gamma_0\in \iota^*H^*(X)$, $\gamma_1\in H^*(D)$, $\iota: D\hookrightarrow X$ is the inclusion.

For relative invariants with $(n+1)$ relative markings, the relation is the following:
\begin{align*}
&\langle [\iota^*\gamma_0]_{k_0},[\gamma_1]_{k_1},\ldots,[\gamma_n]_{k_n}\rangle_{0,n+1,\beta}^{(X,D)}\\
=&(-1)^{k_0}\langle [\gamma_0\cup(X_\infty-\pi^*D)]_{(0,0)}, [\gamma_1]_{(k_1,k_1)},\ldots,[\gamma_n]_{(k_n,k_n)} \rangle_{0,n+1,\beta+(\sum_{i=1}^n k_i)f}^{(P,X_{\infty}+X_{\sigma})},
%=&(-1)^{D\cdot \beta}\langle [\gamma_0\cup (X_{\infty,1}-D)]_0, [\gamma_1\cup X_{\infty,1}\cup X_{\infty,1}]_{0} \rangle_{0,2,\beta+k_1f_1}^{\mathcal O_{P_1}(-X_{\infty,1})\oplus \mathcal O_{P_1}(-X_{\infty,1})},
\end{align*}
where $k_0+k_1+\cdots+k_n=D\cdot \beta$, $\gamma_0\in H^*(X)$ and $\gamma_i\in H^*(D)$ for $i=1,\ldots,n$.

\if{
Let $P_1:=P$ and $X_{\infty,1}=X_\infty$. If $\gamma_i\in \iota^*H^*(X)$, for $i=1,\ldots,n$, then we can repeat this process by considering $P_2=\mathbb P(\mathcal O_{P_1}(-X_{\infty,1})\oplus \mathcal O_{P_1})$. Let $X_{\infty,2}$ be the infinity divisor of the $\mathbb P^1$-bundle $\pi_2:P_2\rightarrow P_1$. Let $X_{\sigma,2}\subset P_2$ denote the divisor that is linear equivalent to $X_\infty$ and $X_{\infty,2}\cap X_{\sigma,2}$. Now we consider the invariants of
\[
\left(P_2\times_{P_1}P_2,X_{\infty, 21}+X_{\sigma,21}+X_{\infty,22}+X_{\sigma,22}\right)
\]
 to reduce the number of relative markings by $2$, where $X_{\infty,2i}$ and $X_{\sigma,2i}$ are the divisors $X_{\infty,2}$ and $X_{\sigma,2}$ from the $i$-th copy of $P_2$ in the fiber product $P_2\times_{P_1}P_2$.

  In general, we construct a rank $n$ bundle $P_n$ which is an iterative $\mathbb P^1$-bundle over $X$. And consider the invariants of the fiber product of $P_n$'s relative to its infinity and $\sigma$ divisors.
  }\fi
 
%\textcolor{red}{Should we write down a general relation in terms of higher rank bundles?}

We state the above ideas more precisely on the level of virtual cycles in the following section.

\subsection{Main theorems}
Let $X$ be a smooth projective variety and $D$ be a smooth nef divisor of $X$. We consider a $\mathbb P^1$-bundle 
\[
P:=\mathbb P(\mathcal O_X(-D)\oplus \mathcal O_X)
\]
and let $X_{\infty}$ be the divisor at infinity and $X_\sigma$ be the graph of the section $\sigma$ of $\mathcal O_X(D)$ that defines $D$. The divisor $X_\sigma$ is a divisor linearly equivalent to $X_\infty$ and $X_\sigma \cap X_\infty=D$. 
%We consider a section $\sigma$ of $\mathcal O_X(D)$ such that the zero locus of $\sigma$ is the divisor $D$. The section $\sigma$ induces a section $\tilde f$ of $\mathcal O_{P}(1)$ such that the zero locus $\tilde f^{-1}(0)$ of $\tilde f$ is $X_\sigma:=\sigma(X)\cong X$ and the intersection of $\tilde f^{-1}(0)$ and $X_\infty$ is $D$. See, for example, \cite{FTY}*{Section 3.1}.

 Let $\overline{M}_{0,\vec k,n_0,\beta}(X,D)$ be the moduli space of genus zero relative stable maps to $(X,D)$ with degree $\beta\in H_2(X)$, $(n+1)$ relative markings with contact orders
 \[
 \vec k=(k_0,k_1,\ldots,k_n)\in (\mathbb Z_{> 0})^{n+1},
 \]
 and $n_0$ interior markings.

On the other hand, we consider the moduli space 
\[
\overline{M}_{0,(k),n_0,\beta+\sum_{i=1}^nk_if}\left(P,X_{\infty}+X_{\sigma}\right)
\]
of genus zero orbifold stable maps to the multi-root stack associated to the pair $\left(P,X_{\infty}+X_{\sigma}\right)$ with degree $\beta+\sum_{i=1}^nk_if\in H_2(P)$, $(n+1)$  markings with contact orders 
\[
(0,0),(k_1,k_1),(k_2,k_2),\ldots,(k_n,k_n)
\]
 and $n_0$ additional interior markings, as defined in \cite{TY20c}*{Definition 20}.

Relative Gromov--Witten cycles can be pushed forward to $A_*(\overline{M}_{0,n+n_0+1,\beta}(X)\times_{X^{n}} D^{n})$ via the following  diagram (see, for example, \cite{FWY}*{Section 5}), where we omit the topological data for the moduli spaces:

\begin{tikzcd}
\overline{M}(X,D)
\arrow[bend left]{drr}{F}
\arrow[bend right,swap]{ddr}{\on{ev}_{(1,\ldots,n)}}
\arrow[dashed]{dr}[description]{\pi_{\on{rel}}} & & \\
& \overline{M}(X)\times_{X^{n}} D^{n} \arrow{r}{} \arrow{d}[swap]{\on{ev}_{(1,\ldots,n)}}
& \overline{M}(X) \arrow{d}{\on{ev}_{(1,\ldots,n)}} \\
& D^{n} \arrow[swap]{r}{\iota}
& X^{n}.
\end{tikzcd}

There is a similar diagram for $\overline{M}_{0,(k),n_0,\beta+\sum_{i=1}^nk_if}\left(P,X_{\infty}+X_{\sigma}\right)$:

\begin{tikzcd}
\overline{M}(P, X_{\infty}+X_{\sigma})
\arrow[bend left]{drr}{F}
\arrow[bend right,swap]{ddr}{\on{ev}_{(1,\ldots,n)}}
\arrow[dashed]{dr}[description]{u} & & \\
& \overline{M}(X)\times_{X^{n}} D^{n} \arrow{r}{} \arrow{d}[swap]{\on{ev}_{(1,\ldots,n)}}
& \overline{M}(X) \arrow{d}{\on{ev}_{(1,\ldots,n)}} \\
& D^{n} \arrow[swap]{r}{\iota}
& X^{n},
\end{tikzcd}
\\where $F$ is induced by the projection $P\rightarrow X$ and forgets the orbifold structures (root constructions) on $P$. For the marking $p_0$, we have insertions from $H^*(X)$.

We have an identity between the genus zero relative Gromov--Witten theory of $(X,D)$ with $(n+1)$-relative markings and the genus zero orbifold Gromov--Witten theory of $(P,X_{\infty}+X_{\sigma})$ with $n$-orbifold markings:

\begin{theorem}\label{thm-main-1}
  Given a smooth projective variety $X$ with a smooth nef divisor $D$, genus zero relative invariants of $(X,D)$ satisfy the following identity:
        \begin{equation}\label{iden-main}
          \begin{split}
& \pi_{\on{rel},*}\left([\overline{M}_{0,\vec k,n_0,\beta}(X,D)]^{\on{vir}}\right)\\
= &(-1)^{k_0} u_*\left( [\overline{M}_{0,(k),n_0,\beta+ (\sum_{i=1}^nk_i)f}(P, X_{\infty}+X_{\sigma})]^{\on{vir}}\cap\on{ev}_0^*(X_\infty-\pi^*D)\right)\\       
\in        &  A_*(\overline{M}_{0,n+n_0+1,\beta}(X)\times_{X^{n}} D^{n}),
 \end{split}
        \end{equation}
        where
the RHS are the virtual cycles for the moduli spaces of genus zero orbifold stable maps to the pair $(P,X_{\infty}+ X_{\sigma})$ with contact orders 
\[
(0,0), (k_1,k_1),\ldots, (k_n,k_n),
\]
as defined in \cite{TY20c}*{Definition 20}. 
\end{theorem}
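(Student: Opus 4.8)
The proof proceeds by degenerating $P$ to the normal cone of $X_\infty$, which is the standard geometric engine behind the local-relative correspondence, but adapted to track the extra relative divisor $X_\sigma$. I would first set up the degeneration $\mathcal{P} \to \mathbb{A}^1$ of the $\mathbb{P}^1$-bundle $P$ so that the general fiber is $P$ and the special fiber is $P \cup_{X_\infty} Q$, where $Q = \mathbb{P}(\mathcal{O}_X(-D) \oplus \mathcal{O}_X)$ again (a second copy of $P$) glued along $X_\infty \cong X$. Throughout, the divisor $X_\sigma$ and the root-stack structures along $X_\infty + X_\sigma$ must be carried along compatibly: on the $P$-component the relevant pair becomes $(P, X_\sigma)$ with the original tangency along $X_\infty$ now ``used up'' by the gluing node, while on the $Q$-component one gets the multi-root stack of $(Q, X_{\infty,Q} + X_{\sigma,Q})$. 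The key point — this is where the divisor $(X_\infty - D)$ insertion at $p_0$ enters — is that inserting $\mathrm{ev}_0^*(X_\infty - D)$ forces the marking $p_0$ onto the component where it sees tangency, effectively selecting the term in the degeneration formula in which $p_0$ lies on the $X_\infty$-side; the cohomology class $X_\infty - D$ restricts to $X_\infty \cong X$ as (minus) the normal bundle class, which is exactly what converts a contact-order-$0$ orbifold marking on $P$ into a genuine relative marking of contact order $k_0$ on $(X,D)$ (cf.\ the discussion around \eqref{iden-local-rel-1}).

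The second step is to apply the degeneration formula for orbifold/relative Gromov--Witten virtual cycles — in the form developed in \cite{TY20c}, \cite{FWY} — to the right-hand side of \eqref{iden-main}. This expresses the virtual cycle of the multi-root stack of $(P, X_\infty + X_\sigma)$ as a sum over splittings of the curve class and the markings into a piece mapping to the $P$-component (relative to $X_\infty$) and a piece mapping to the $Q$-component (relative to $X_{\infty,Q}$), glued along $X_\infty$ with matching contact orders and summed over a basis of $H^*(X_\infty)$. One then argues that, after capping with $\mathrm{ev}_0^*(X_\infty - D)$ and pushing forward via $u$, all but one distinguished term vanish: the surviving term has the entire curve in the $Q$-component together with the marking $p_0$, while the $P$-component contributes only the ``trivial'' factor. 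This vanishing is the same mechanism as in \cite{vGGR}: the fiber class $f$ and the numerical constraints force the $P$-side contributions to be either rigid or zero, and the virtual dimension count (together with the fact that $D$ is nef, so that the relevant obstruction bundle has no higher cohomology in the relevant range) kills the rest.

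The third step is to recognize the surviving term. On the $Q$-side one has a $\mathbb{P}^1$-bundle $Q = \mathbb{P}(\mathcal{O}_X(-D)\oplus \mathcal{O}_X)$ over $X$ with a relative marking of contact order $k_0$ along $X_{\infty,Q}$ plus $n$ relative markings of contact order $k_i$ along $X_{\sigma,Q}$; using the relative divisor/fiber relations in $H_2(Q)$ and the projection $Q \to X$, this invariant is identified — via the rubber calculus and the fact that the section at infinity is $X \hookrightarrow Q$ with normal bundle $\mathcal{O}_X(D)$ — with the relative invariant $\langle [\iota^*\gamma_0]_{k_0}, [\gamma_1]_{k_1}, \dots, [\gamma_n]\rangle^{(X,D)}_{0,n+1,\beta}$, with the sign $(-1)^{k_0}$ emerging from the Euler class of the excess/obstruction bundle exactly as the $(-1)^{D\cdot\beta + 1}$ does in \eqref{iden-local-rel-1}. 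Matching the pushforwards to $A_*(\overline{M}_{0,n+1,\beta}(X)\times_{X^n} D^n)$ via the two commuting diagrams for $\pi_{\mathrm{rel}}$ and $u$ then gives \eqref{iden-main}.

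The main obstacle I expect is the bookkeeping in the degeneration formula in the presence of \emph{two} relative divisors and their root-stack structures simultaneously: one must verify that the degeneration of the multi-root stack of $(P, X_\infty + X_\sigma)$ is again a multi-root stack (in the sense of \cite{TY20c}) on each component with the correct twisting, that the gluing/splitting axioms hold with the orbifold markings along $X_\sigma$ passing ``straight through'', and — most delicately — that the capping with $\mathrm{ev}_0^*(X_\infty - D)$ genuinely annihilates every term except the distinguished one, rather than merely the generic term. This vanishing is not formal: it relies on a dimension/positivity argument using nefness of $D$ and a careful analysis of which configurations of the $P$-component are virtually nontrivial, and it is the technical heart of the proof. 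Once that selection principle is established, the remaining identification is a (lengthy but routine) computation with rubber integrals and Chern classes of line bundles on $\mathbb{P}^1$-bundles.
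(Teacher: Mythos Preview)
Your proposal uses a different degeneration from the paper's, and the difference is not cosmetic. You degenerate $P$ to the normal cone of $X_\infty$, obtaining $P \cup_{X_\infty} Q$ with both components $\mathbb{P}^1$-bundles over $X$. The paper instead degenerates the \emph{base} $X$ to the normal cone of $D$ and carries the bundle along: $\mathfrak{P} = \mathbb{P}(\mathcal{O}_{\mathfrak{X}}(-\mathfrak{D}) \oplus \mathcal{O})$ has central fiber $(X \times \mathbb{P}^1) \cup_{D \times \mathbb{P}^1} P_Y$. The crucial feature is that the strict transform $\mathfrak{D}$ is empty over the $X$-component, so the bundle \emph{trivializes} there to $X \times \mathbb{P}^1$, and the pair $(X_\infty, X_\sigma)$ limits to $(X \times \infty, X \times \mathrm{pt})$. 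This product structure is the engine of the proof: by the product formula the virtual class on that side factors as $[\overline{M}(X,D)]^{\on{vir}} \times [\overline{M}(\mathbb{P}^1, 0+\infty)]^{\on{vir}}$, so the relative class of $(X,D)$ with all $(n+1)$ contact orders appears directly --- no rubber calculus is needed to manufacture it. In your degeneration neither component is a product and the gluing divisor is a copy of $X$, not of $D$; I do not see how the class $[\overline{M}_{0,\vec k,\beta}(X,D)]^{\on{vir}}$ with $(n+1)$ tangencies along $D$ would fall out of rubber integrals on a $\mathbb{P}^1$-bundle over $X$ relative to two sections isomorphic to $X$.

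There is also a direction error in your selection mechanism, and the resulting architecture is essentially the opposite of what actually happens. The class $X_\infty - D$ is linearly equivalent to the zero section $X_0 \subset P$, which is \emph{disjoint} from $X_\infty$; capping with it pushes $p_0$ \emph{away} from $X_\infty$, not onto it. In the paper's proof \emph{all} markings $p_0,\dots,p_n$ end up on the $P_Y$-side, each on its own vertex carrying only a fiber class (for $v_0$ the local-relative correspondence on $(Y,D_0+D_\infty)$ supplies the sign $(-1)^{k_0}$), while the unique vertex over $X \times \mathbb{P}^1$ carries the class $\beta$, no legs, and all $(n+1)$ edges --- and its contribution \emph{is} $\pi_{\on{rel},*}[\overline{M}_{0,\vec k,\beta}(X,D)]^{\on{vir}}$. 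The elimination of all other graphs is not a positivity/dimension argument on the main component as you suggest, but rather a sequence of combinatorial lemmas forcing every edge to carry zero age along the orbifold divisors, together with orbifold quantum Lefschetz and a localization argument on $(Y,D_0+D_\infty)$ that pins each $P_Y$-vertex to a single fiber.
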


%\textcolor{red}{Where does the relation take value? }

%\textcolor{red}{How can we include descendant classes?}

\begin{remark}
    The state space for the last $n$ markings are naturally $H^*(D)$ on both sides of (\ref{iden-main}). For the relative marking $p_0$ on the LHS, the insertion is from the ambient space $\iota^*H^*(X)\subset H^*(D)$. The marking $p_0$ on the RHS is an interior marking with insertion coming from $H^*(X)$ where an element of $H^*(X)$ is viewed as an element of $H^*(P)$ via the pullback by $\pi:P\rightarrow X$.
\end{remark}

\begin{remark}
    In \cite{BNR22} and \cite{BNR24}, Battistella--Nabijou--Ranganathan also considered the so-called naive invariants of an snc pair. In genus zero, these naive invariants are the same as the orbifold invariants by \cite{BNR22}. So, we can replace orbifold invariants by naive invariants in Theorem \ref{thm-main-1}.  
\end{remark}

%\begin{remark}    It can be seen from the proof in Section \ref{sec:proof} that we can also allow several interior markings on the LHS. Then the RHS will also have several interior markings such that insertions are from $H^*(X)$. \end{remark}

%\textcolor{red}{If the insertion has the class $[D]$, then the degeneration formula is more complicated. But for relative GW of $(X,D)$ there will also be the same complexity,, so it is ok.}

\begin{remark}
     We consider Theorem \ref{thm-main-1} as a generalization of the local-relative correspondence beyond the maximal contacts.  
     For maximal contact invariants, we do not need the fiber class to fix any contact orders because the maximal contact $D\cdot\beta$ is already fixed when $\beta$ is fixed. 

     For relative invariants with contact orders $k_0,\ldots,k_n$, the local theory of $\mathcal O_X(-D)$ does not contain the discrete data to encode these contact orders.
      By considering the invariants of $P$, we can have a choice of the fiber class and choices of contact orders along $X_\infty$ and $X_\sigma$ at markings $p_i$ to encode some additional information.   When we fix a fiber class $\sum_{i=1}^n k_i f$, the remaining contact order $k_0=D\cdot\beta-\sum_{i=1}^n k_i$ is also fixed when $\beta$ is fixed. In this way, we reduce the number of relative markings by $1$. The contact orders $k_i$, $i=1,\ldots,n$, to $D$ are recorded as the contact orders $(k_i,k_i)$, $i=1,\ldots,n$ to $X_\infty+X_\sigma$ and $X_\infty\cap X_\sigma=D$.
 \end{remark}

Now we can consider the Gromov--Witten theory of multi-root stacks defined in \cite{TY20c} for an snc pair $(X,D)$, where $X$ is a smooth projective variety and $D=D_1+\ldots+D_m$ is snc. When all $D_i$ are nef for all $i\in \{1,\ldots,m\}$, there is a local-orbifold correspondence (\cite{TY20b} and \cite{BNTY}) that equals the genus zero local Gromov--Witten theory of the bundle $\bigoplus_{i=1}^m\mathcal O_X(-D_i)$ with the genus zero orbifold Gromov--Witten theory of $(X,D)$ with maximal contacts. The local-orbifold correspondence of \cite{BNTY} follows from either the local-orbifold correspondence for smooth orbifold pairs or the product formula for multi-root stacks \cite{BNTY}*{Theorem B} and \cite{BNR22}*{Theorem 2.5}. 
%For example, suppose $m=2$, the proof of Theorem \ref{thm-main-1} for the smooth orbifold pair $(X_{D_{2},r},D_{1,(D_1\cap D_2,r)})$ follows from the proof of Theorem \ref{thm-main-1} in Section \ref{sec:proof} by allowing the extra root structure along $D_2$. Alternatively, by the rank reduction technique, the product formula and Theorem \ref{thm-main-1} imply the local-orbifold correspondence beyond maximal contacts.

To state this result for the snc pair $(X,D)$ more precisely, we need to introduce some notation here.
\begin{itemize}
    \item Let
\[
\mathcal X_{m}:=X_{(D_1,r_1),\ldots, (D_{m},r_{m})}
\]
be a multi-root stack of $X$ along $D_i$ with sufficiently large roots $r_i$, for $i=1,\ldots,m$. Then $\mathcal X_m$ is a $r_m$-th root stack of $\mathcal X_{m-1}$ along $\mathcal D_m$, where $\mathcal D_m$ is the pullback of $D_m$ from $X$. By \cite{TY20c}*{Theorem 9}, the genus zero Gromov--Witten invariants of $\mathcal X_m$ does not depend on $r_i$ when $r_i$ is sufficiently large. The large $\vec r$-limits of these invariants are defined as the orbifold invariants of the snc pair $(X,D)$ in \cite{TY20c}*{Section 3}.
\item Let $\mathcal P_m$ be the $\mathbb P^1$-bundle 
\[
\pi_m: \mathcal P_m:=\mathbb P(\mathcal O_{\mathcal X_{m-1}}(-\mathcal D_m)\oplus \mathcal O_{\mathcal X_{m-1}})\rightarrow \mathcal X_{m-1}.
\]
\item Let $\mathcal X_{\infty,m}$ be the divisor of $\mathcal P_m$ at infinity and $\mathcal X_{\sigma,m}$ be the graph of the section $\sigma$ of $\mathcal O_{\mathcal X_{m-1}}(\mathcal D_m)$ defining $\mathcal D_m$. The divisor $\mathcal X_{\sigma,m}$ is a divisor linearly equivalent to $\mathcal X_{\infty,m}$ and $\mathcal X_{\sigma,m} \cap \mathcal X_{\infty,m}=\mathcal D_m$.
\item Let $P_m$ be the coarse moduli space of $\mathcal P_m$. This is a $\mathbb P^1$-bundle $\pi_m:P_m\rightarrow X$. Then $\mathcal P_m$ is a multi-root stack of $P_m$ along the divisor $\pi_m^* D_1+\cdots+\pi_m^* D_{m-1}$.
\end{itemize}
 Note that $\mathcal X_{\infty,m}$ and $\mathcal X_{\sigma,m}$ are pullback from the infinity divisor $X_{\infty,m}$ and the $\sigma$-divisor $X_{\sigma,m}$ in $P_m$.  As $\mathcal P_m$ is a multi-root stack of $P_m$ along the divisor $\pi_m^* D_1+\cdots+\pi_m^* D_{m-1}$, we can also write the pair $\left(\mathcal P_m,\mathcal X_{\infty,m}+\mathcal X_{\sigma,m}\right)$ as $(P_m,\pi_m^* D_1+\cdots+\pi_m^* D_{m-1}+X_{\infty,m}+ X_{\sigma,m})$ and write $(\mathcal X_{m-1}, \mathcal D_m)$ as $(X,D)$.

 Let $\overline{M}_{0,\{\vec k^j\}_{j=1}^m,n_0,\beta}(X,D)=\overline{M}_{0,\{\vec k^j\}_{j=1}^m,n_0,\beta}(\mathcal X_{m-1},\mathcal D_m)$ be the moduli space of genus zero relative stable maps to $(X,D)$ with degree $\beta\in H_2(X)$, $(n+1)$-markings with contact orders
 \[
 \vec k^j=(k_0^j,k_1^j,\ldots,k_n^j)\in (\mathbb Z_{> 0})^{n+1}, \text{ for } j=1,\ldots,m,
 \]
 and $n_0$ interior markings.
Here we follow the notation in \cite{TY20c} where the superscript $j$ is indexing the irreducible components of the divisors, not representing powers of the numbers.

On the other hand, we consider the moduli space 
\[
\overline{M}_{0,(k^j)_{j=1}^m,n_0,\beta+\sum_{i=1}^nk_i^mf_m}\left(\mathcal P_m,\mathcal X_{\infty,m}+\mathcal X_{\sigma,m}\right)
\]
of genus zero orbifold stable maps to the multi-root stack associated to the pair $\left(\mathcal P_m,\mathcal X_{\infty,m}+\mathcal X_{\sigma,m}\right)$ with degree $\beta+\sum_{i=1}^nk_i^mf_m\in H_2(\mathcal P_m)$, $i$-th marking carries contact orders $k_i^j$ with $\pi_m^* D_j$ for $i=1,\ldots,n$. The contact orders with $X_{\infty,m}$ and $X_{\sigma,m}$ are 
\[
(0,0),(k_1^m,k_1^m),(k_2^m,k_2^m),\ldots,(k_n^m,k_n^m),
\]
and $n_0$ additional interior markings.

%\textcolor{red}{We also need a product formula for the fiber product of $\mathbb P^1$-bundles.}

Similarly to Theorem \ref{thm-main-1}, we consider the following diagrams:

\begin{tikzcd}
\overline{M}(\mathcal X_{m-1},\mathcal D_m)
\arrow[bend left]{drr}{F}
\arrow[bend right,swap]{ddr}{\on{ev}_{(1,\ldots,n)}}
\arrow[dashed]{dr}[description]{\pi_{\on{rel}}} & & \\
& \overline{M}(\mathcal X_{m-1})\times_{\mathcal X_{m-1}^{n}} \mathcal D_m^{n} \arrow{r}{} \arrow{d}[swap]{\on{ev}_{(1,\ldots,n)}}
& \overline{M}(\mathcal X_{m-1}) \arrow{d}{\on{ev}_{(1,\ldots,n)}} \\
& \mathcal D_m^{n} \arrow[swap]{r}{\iota}
& \mathcal X_{m-1}^{n}.
\end{tikzcd}

and

\begin{tikzcd}
\overline{M}(\mathcal P_{m}, \mathcal X_{\infty,m}+\mathcal X_{\sigma,m})
\arrow[bend left]{drr}{F}
\arrow[bend right,swap]{ddr}{\on{ev}_{(1,\ldots,n)}}
\arrow[dashed]{dr}[description]{u} & & \\
& \overline{M}(\mathcal X_{m-1})\times_{\mathcal X_{m-1}^{n}} \mathcal D_m^{n} \arrow{r}{} \arrow{d}[swap]{\on{ev}_{(1,\ldots,n)}}
& \overline{M}(\mathcal X_{m-1}) \arrow{d}{\on{ev}_{(1,\ldots,n)}} \\
& \mathcal D_m^{n} \arrow[swap]{r}{\iota}
& \mathcal X_{m-1}^{n},
\end{tikzcd}
\\
where we slightly abuse the notation by using the same letters $F$, $u$ etc., as in the smooth divisor case.

\begin{theorem}\label{thm-local-orb}
The generalized local-orbifold correspondence for snc pairs holds beyond maximal contacts:
\begin{equation}\label{iden-main-snc}
          \begin{split}
& \pi_{\on{rel},*}\left([\overline{M}_{0,\vec k,n_0,\beta}(\mathcal X_{m-1},\mathcal D_m)]^{\on{vir}}\right)\\
= &(-1)^{k_0} u_*\left( [\overline{M}_{0,(k),n_0,\beta+ (\sum_{i=1}^nk_i)f}(\mathcal P_{m}, \mathcal X_{\infty,m}+\mathcal X_{\sigma,m})]^{\on{vir}}\cap\on{ev}_0^*(\mathcal X_\infty-\pi_m^*\mathcal D_m)\right)\\       
        & \in A_*(\overline{M}_{0,n+n_0+1,\beta}(\mathcal X_{m-1})\times_{\mathcal X_{m-1}^{n}} \mathcal D_m^{n}).
 \end{split}
        \end{equation}
\end{theorem}

Theorem \ref{thm-local-orb} is a generalization of Theorem \ref{thm-main-1} by adding additional root constructions to some snc divisors. The proof is almost identical to the proof of Theorem \ref{thm-main-1}. We will present it in Section \ref{sec:proof-local-orb}. 

   % \textcolor{red}{More precise.....}

Theorem \ref{thm-main-1} reduces the number of relative markings from $(n+1)$ to $n$. To reduce the number of relative markings from $n$ to $(n-1)$ we can repeat the process using Theorem \ref{thm-local-orb}. After applying Theorem \ref{thm-main-1}, we have two divisors $X_{\infty,1}, X_{\sigma,1}\subset P_1:=P$, for each divisor we can construct a $\mathbb P^1$-bundle 
\[
P_2:=\mathbb P(\mathcal O_{P_1}(-X_{\infty,1}\oplus \mathcal O)).
\]
 By the product formula for multi-root stacks \cite{BNTY}*{Theorem 2.1}, we consider the fiber product
\begin{align}\label{P-2-P-2}
(P_2\times_{P_1}P_2,X_{\infty,21}+X_{\sigma, 21}+X_{\infty,22}+X_{\sigma,22}),
\end{align}
where $X_{\infty,2i}$ and $X_{\sigma,2i}$ are the infinity and $\sigma$ divisors of the $i$-th factor of the fiber product $P_2\times_{P_1}P_2$.

Then, by Theorem \ref{thm-main-1} and Theorem \ref{thm-local-orb}, the genus zero relative Gromov--Witten invariant with $(n+1)$-relative markings with contact order $k_0,k_1,\ldots,k_n$ becomes the genus zero Gromov--Witten invariant of the pair (\ref{P-2-P-2}) with contact orders
\[
\vec 0,\vec 0, (k_2,k_2,k_2,k_2),\ldots,(k_n,k_n,k_n,k_n).
\]

To further reduce the number of relative markings to $0$, we repeat this process $(n-1)$-times and conclude
\begin{theorem}\label{thm-main-n}
    Given a smooth pair $(X,D)$ with $D$ nef, the genus zero relative Gromov--Witten invariants of $(X,D)$ with $(n+1)$ relative markings and ambient insertions equal the (absolute/local) Gromov--Witten invariants of a toric bundle over $X$ with rank $2^{n+1}-1$.
\end{theorem}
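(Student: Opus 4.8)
# Proof Proposal for Theorem \ref{thm-main-n}

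The plan is to iterate Theorem~\ref{thm-main-1} together with its snc generalization Theorem~\ref{thm-local-orb}, peeling off one relative marking at a time while tracking the geometry of the target and the state spaces at the markings. First I would fix the combinatorial data: contact orders $k_0,k_1,\ldots,k_n$ with $k_0+\cdots+k_n=D\cdot\beta$, an insertion $\gamma_0\in H^*(X)$ at $p_0$, and insertions $\gamma_i\in\iota^*H^*(X)$ at $p_i$ for $i=1,\ldots,n$; the point of the ambient hypothesis is precisely that $\gamma_i$ is pulled back from $H^*(X)$, so its restriction to any boundary stratum of a bundle over $X$ is again ambient and the induction can continue. Set $(Q_0,D^{(0)})=(X,D)$, with $D^{(0)}$ having $m_0=1$ smooth component and $Q_0$ of relative rank $r_0=0$ over $X$. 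For the first stage I would apply Theorem~\ref{thm-main-1}: it identifies the pushforward of the genus zero relative virtual cycle of $(X,D)$ with $n+1$ markings with (up to the sign $(-1)^{k_0}$ and the insertion $\gamma_0\cup(X_\infty-D)$ at $p_0$) the pushforward of the genus zero orbifold virtual cycle of the multi-root stack of $(Q_1,D^{(1)})$, where $Q_1=P=\mathbb P(\mathcal O_X(-D)\oplus\mathcal O_X)$, $D^{(1)}=X_\infty+X_\sigma$ has $m_1=2$ smooth nef components, $Q_1$ has relative rank $r_1=1$ over $X$, the marking $p_0$ becomes interior, and the remaining $n$ relative markings carry contact orders $(k_i,k_i)$ along $(X_\infty,X_\sigma)$ with ambient insertions.

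The inductive step is where the bundle grows. Assume that after $j$ stages we have an snc pair $(Q_j,D^{(j)})$ with $D^{(j)}$ a sum of $m_j=2^j$ smooth nef components, $Q_j$ a toric bundle over $X$ of relative rank $r_j=2^j-1$, and $n+1-j$ relative markings with ambient insertions. For each component $D^{(j)}_a$ of $D^{(j)}$ I would form the $\mathbb P^1$-bundle $\mathbb P(\mathcal O_{Q_j}(-D^{(j)}_a)\oplus\mathcal O_{Q_j})$ over $Q_j$ with its infinity and $\sigma$ divisors; by the product formula for multi-root stacks \cite{BNTY} it suffices to perform the construction of Section~\ref{sec:proof} one component at a time, which forces passage to the $m_j$-fold fiber product $Q_{j+1}$ of these $\mathbb P^1$-bundles over $Q_j$. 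Applying the snc version of Theorem~\ref{thm-main-1}, namely Theorem~\ref{thm-local-orb} (i.e.\ the degeneration-to-the-normal-cone argument of Section~\ref{sec:proof} carried out in the presence of the remaining root structures), to the new infinity-and-$\sigma$ pairs reduces the number of relative markings by one and yields $(Q_{j+1},D^{(j+1)})$ with $m_{j+1}=2m_j=2^{j+1}$ components and $Q_{j+1}$ of relative rank $r_{j+1}=r_j+m_j=2^{j+1}-1$ over $X$. The ambient property is preserved exactly as in the first stage, so the induction continues.

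After $n+1$ stages every marking is interior and $Q_{n+1}$ is a toric bundle over $X$ of relative rank $r_{n+1}=2^{n+1}-1$ whose fibers are products of $\mathbb P^1$'s; the resulting invariants are the absolute genus zero Gromov--Witten invariants of $Q_{n+1}$. In the final stage, where only one relative marking remains and its contact is maximal (the fiber classes having absorbed all of $D\cdot\beta-k_0$), one may instead invoke the local--orbifold correspondence \cite{BNTY} to rewrite the invariants as local invariants of a sum of $2^n$ line bundles over the toric bundle $Q_n$, which is the ``local'' alternative in the statement; either way the total rank obeys the recursion $r_{j+1}=2r_j+1$ with $r_1=1$, giving $r_{n+1}=2^{n+1}-1$.

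The hard part will be the snc generalization of Theorem~\ref{thm-main-1} invoked at each inductive stage: one must verify that the degeneration argument of Section~\ref{sec:proof} goes through verbatim when the base pair already carries an snc boundary together with root structures, and that this is genuinely compatible with the product formula \cite{BNTY}, so that constructing one $\mathbb P^1$-bundle per boundary component and taking the fiber product reproduces the virtual cycle on the nose. The accompanying bookkeeping — checking that a contact order $k_i$ along $D$ propagates to the tuple $(k_i,\ldots,k_i)$ along the $2^j$ boundary components at stage $j$, and that ambient insertions restrict to ambient insertions at every boundary stratum so that the next stage's hypotheses hold — is routine but essential, since it is what makes the iteration well-defined.
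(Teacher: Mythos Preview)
Your proposal is correct and follows essentially the same route as the paper: the paper's argument for Theorem~\ref{thm-main-n} is precisely the iteration sketched in the paragraphs preceding its statement, applying Theorem~\ref{thm-main-1} together with its snc extension Theorem~\ref{thm-local-orb} (via the product formula of \cite{BNTY}) to peel off one relative marking at a time, doubling the number of boundary components and passing to the fiber product of $\mathbb P^1$-bundles at each stage. Your explicit tracking of the rank recursion $r_{j+1}=r_j+m_j=2r_j+1$, of the contact tuples $(k_i,\ldots,k_i)$, and of the ambient-insertion hypothesis is more detailed than what the paper writes out, but the underlying strategy is identical.
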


%\textcolor{red}{It may need to be more precise.}

\begin{remark}
By the mirror theorems of \cite{Brown}, \cite{CCIT15} and \cite{JTY}, genus zero Gromov--Witten invariants of toric bundles can be computed in terms of the genus zero Gromov--Witten invariants of the base. Theorem \ref{thm-main-n} can be used to compute all genus zero relative Gromov--Witten invariants with ambient insertions.  \end{remark}

% \textcolor{red}{Actually, these log invariants are also quite special. These are iterative $\mathbb P^1$-bundles. There are some examples of toric varieties ($\mathbb P^1\times \mathbb P^1\times\cdots \mathbb P^1$), where local invariants equal to log invariants. Can we investigate this also?}

By \cite{LP}*{Theorem 2}, when $H^*(X)$ is generated by $\on{Pic}(X)$, genus zero absolute Gromov--Witten invariants of $X$ can be reconstructed from one-point invariants. For toric bundles, one-point invariants can be computed via the mirror theorem of \cite{Brown}. Using the virtual localization formula, genus zero Gromov--Witten invariants of toric bundles can be expressed in terms of genus zero descendant Gromov--Witten invariants of the base $X$. The toric bundles appearing here are iterative $\mathbb P^1$-bundles. As discussed in \cite{CGT} and \cite{Koto}, Brown's theorem \cite{Brown} implies that there is a decomposition of the quantum cohomology of these toric bundles in terms of the quantum cohomology of the base.
This gives a reconstruction theorem for genus zero relative Gromov--Witten invariants.
\begin{corollary}\label{cor-reconst}
Suppose $H^*(X)$ is generated by $\on{Pic}(X)$. Then for a smooth nef divisor $D\subset X$, the genus zero primary relative Gromov--Witten invariants of $(X,D)$ with ambient insertions and with non-negative contacts, can be reconstructed from genus zero one-point invariants of $X$.   
\end{corollary}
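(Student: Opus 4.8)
The plan is to combine the chain of correspondences developed in the paper with known reconstruction results for absolute Gromov--Witten invariants of the toric bundles that appear. By Theorem \ref{thm-main-n}, every genus zero primary relative Gromov--Witten invariant of $(X,D)$ with ambient insertions and non-negative contact orders can be rewritten as an (absolute) genus zero Gromov--Witten invariant of an iterated $\mathbb P^1$-bundle $Y$ over $X$; more precisely, after the pushforward diagrams, the relative cycle is expressed via $u_*$ of a virtual class on a moduli space of stable maps to such a bundle, capped with pullbacks of classes from $X$ (and from the $X_\infty$, $X_\sigma$ divisors, which are themselves pulled back from $X$ up to the relevant root structures). So the first step is simply to record that it suffices to reconstruct genus zero primary invariants of these iterated $\mathbb P^1$-bundles $Y \to X$ from genus zero one-point invariants of $X$.

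Next I would invoke the structure of $H^*(Y)$. Since each stage of the tower is a projectivization of a rank-two bundle over the previous stage, the Leray--Hirsch theorem gives $H^*(Y)$ as a free module over $H^*(X)$ generated by monomials in the relative hyperplane classes $\xi_1,\dots,\xi_N$ (one for each $\mathbb P^1$-factor, $N = 2^{n+1}-1$), each $\xi_j$ lying in $\on{Pic}$ of the corresponding stage and hence in $\on{Pic}(Y)$. Combined with the hypothesis that $H^*(X)$ is generated by $\on{Pic}(X)$, this shows $H^*(Y)$ is generated by $\on{Pic}(Y)$. Now apply the reconstruction theorem of Lee--Pandharipande \cite{LP}: when $H^*(Y)$ is generated by divisor classes, all genus zero primary Gromov--Witten invariants of $Y$ are determined by the genus zero one-point (primary, or one-point with one descendant, depending on the precise statement) invariants of $Y$, via the WDVV/topological recursion relations together with the divisor and string equations. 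The second step is therefore the reduction to one-point invariants of $Y$ itself.

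The third step is to express the one-point invariants of $Y$ in terms of genus zero (descendant) one-point invariants of the base $X$. Here I would use Brown's mirror theorem \cite{Brown} for toric bundles: it determines the small $J$-function of $Y$ — equivalently the genus zero one-point descendant invariants of $Y$ — from that of $X$ by an explicit hypergeometric modification along the fibers, as recalled in the remark following Theorem \ref{thm-main-n} and in \cite{CGT}, \cite{Koto}. Since the tower is built from $\mathbb P^1$-bundles this is the iterated/projective-bundle case of Brown's theorem, which is well documented. Chaining these together: relative invariant $\rightsquigarrow$ absolute invariant of $Y$ (Theorem \ref{thm-main-n}) $\rightsquigarrow$ one-point invariants of $Y$ (Lee--Pandharipande, using $H^*(Y)$ generated by $\on{Pic}(Y)$) $\rightsquigarrow$ one-point invariants of $X$ (Brown). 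One should remark that the non-negative contact hypothesis is exactly what is needed for Theorem \ref{thm-main-n} to apply, and that ``ambient insertions'' guarantees all the insertions that appear on the $Y$ side come from $H^*(X) \hookrightarrow H^*(Y)$, so no exotic cohomology of $Y$ enters.

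I expect the main obstacle to be a bookkeeping one rather than a conceptual one: one must check that after all the pushforwards $\pi_{\on{rel}}$, $u$ and the product-formula identifications, the resulting class on the $Y$-side really is a genuine primary Gromov--Witten invariant of $Y$ (not, say, a twisted or descendant class that falls outside the scope of \cite{LP}) — in particular that the insertions $\on{ev}_0^*(X_\infty - D)$ and the fiber-class constraints translate into honest cohomology insertions and a fixed curve class, so that the Lee--Pandharipande reconstruction genuinely applies. A secondary subtlety is that \cite{LP} reconstructs from one-point invariants with a descendant (or one-point primary together with the $\psi$-class data); one has to confirm that Brown's theorem supplies precisely those one-point descendant invariants of $Y$ from $X$, which it does, but the matching of conventions (string/divisor equations, the role of $\psi$-classes on the genus zero moduli with few markings) is where care is required. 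None of these are serious difficulties, so I would present the corollary as a direct consequence of Theorem \ref{thm-main-n}, \cite{LP} and \cite{Brown}, with the module structure of $H^*(Y)$ over $H^*(X)$ via Leray--Hirsch as the one genuinely new (but elementary) input.
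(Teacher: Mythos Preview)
Your proposal is correct and follows essentially the same route as the paper: the paragraph immediately preceding the corollary invokes Theorem~\ref{thm-main-n}, the Lee--Pandharipande reconstruction \cite{LP}, and Brown's mirror theorem \cite{Brown} (together with \cite{CGT}, \cite{Koto}) in the same order you do. Your explicit Leray--Hirsch step showing that $H^*(Y)$ is generated by $\on{Pic}(Y)$ is the one ingredient the paper leaves implicit.
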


In general, by \cite{MP}, relative Gromov--Witten invariants of $(X,D)$ can be uniquely and effectively reconstructed from Gromov--Witten invariants of $X$, Gromov--Witten invariants of $D$ and the restriction map $H^*(X,\mathbb Q)\rightarrow H^*(D,\mathbb Q)$. 
%If we consider relative invariants as a limit of orbifold Gromov--Witten invariants of roots stacks, a reconstruction theorem for genus zero orbifold Gromov--Witten invariants only hold in very limited cases \cite{Rose}. 

\if{
Since all the relative invariants that appear in the degeneration formula can be determined by the $I$-functions, one may expect that there is a relation between the $I$-function of the variety being degenerated and the $I$-functions of the varieties of the degenerated pieces. Such a relation was found in \cite{DKY} in the toric setting and is expected to be true in general. It was not clear from \cite{DKY} that if the relation among $I$-functions is compatible with the degeneration formula as several obstructions were pointed out at the end of \cite{DKY}. The main issue was that the set of invariants determined by the $I$-functions and the set of invariants appearing in the degeneration formulas are different. With Corollary \ref{cor-reconst}, we can indeed expect that the relation among the $I$-functions are compatible with the degeneration (at least under the assumption of Corollary \ref{cor-reconst}). We view such a relation among $I$-functions or quantum D-modules, as a {\em quantum Mayer–Vietoris theorem}. There is a Mayer-Vietoris theorem for single Gromov--Witten invariants in \cite{MP}. Here, we mean a quantum Mayer-Vietoris theorem on the structural level.
}\fi

\subsection{Computation of genus zero relative Gromov--Witten invariants}

The main theorems reduce the computation of genus zero relative Gromov--Witten invariants with several relative markings to certain local Gromov--Witten invariants. These local targets are  special cases of toric bundles over $X$, whose structures are well-known. For example, there is a mirror theorem for toric bundles \cite{Brown} and there are Virasoro constraints for toric bundles \cite{CGT}. 

The computation of relative Gromov--Witten invariants is usually more complicated than the computation of local Gromov--Witten invariants. A computation of a generating function of genus zero two-point relative Gromov--Witten invariants of a smooth log Calabi--Yau pair $(X,D)$ was presented in \cite{You22}. The generating function is the proper Landau--Ginzburg potential of the mirror of the log Calabi--Yau pair $(X,D)$. 
The computation in \cite{You22} is highly nontrivial and involves considering extended relative $I$-functions, understanding the extended relative mirror map, computing several kinds of auxiliary invariants, and taking partial derivatives, among other technical steps.

Now, Theorem \ref{thm-main-1} implies that relative invariants with $2$ relative markings satisfy the following identity:
\begin{align*}
&\langle [\iota^*\gamma_0]_{k_0},[\gamma_1]_{k_1}\rangle_{0,2,\beta}^{(X,D)}\\
=&(-1)^{k_0}\langle [\gamma_0\cup(X_\infty-\pi^*D)]_{(0,0)}, [\gamma_1]_{(k_1,k_1)} \rangle_{0,2,\beta+k_1f_1}^{(P,X_{\infty}+X_{\sigma})},
\end{align*}
where $k_0+k_1=D\cdot \beta$, $\gamma_0\in \iota^*H^*(X)$, $\gamma_1\in H^*(D)$, $\iota: D\hookrightarrow X$ is the inclusion.
If we also have $\gamma_1\in \iota^*H^*(X)$, the local-orbifold correspondence with maximal contacts \cite{BNTY}*{Theorem A} implies that
\begin{align*}
&\langle [\iota^*\gamma_0]_{k_0},[\gamma_1]_{k_1}\rangle_{0,2,\beta}^{(X,D)}\\
=&(-1)^{k_0}\langle [\gamma_0\cup(X_\infty-\pi^*D)]_0, [\gamma_1\cup X_{\infty}\cup X_{\infty}]_{0} \rangle_{0,2,\beta+k_1f}^{\mathcal O_{P}(-X_{\infty})\oplus \mathcal O_{P}(-X_{\infty})}.
\end{align*}
Therefore, this reduces two-point relative invariants of $(X,D)$ to local invariants of the bundle $\mathcal O_{P}(-X_{\infty})\oplus \mathcal O_{P}(-X_{\infty})$.

In Section \ref{sec: computation}, we use it to reduce the computation of two-point invariants to the computation of one-point local Gromov--Witten invariants. Then we provide a simpler computation via the mirror theorem for toric bundles \cite{Brown}.

\begin{theorem}[=\cite{You22}*{Theorem 5.1}]\label{thm-lg-potential}
Let $X$ be a smooth projective variety with a smooth nef anticanonical divisor $D$. Let 
\[
W:=\vartheta_1=x^{-1}+x^{-1}\sum_{n=1}^\infty\sum_{\beta: D\cdot \beta=n+1}n\langle [1]_1,[\on{pt}]_{n}\rangle_{0,\beta,2}^{(X,D)}q^\beta
\]
be the mirror proper Landau--Ginzburg potential. Then
\[
W=x^{-1}\exp\left(g(y(q))\right),
\]
where
\[
g(y)=\sum_{\substack{\beta\in \on{NE}(X)\\ D\cdot \beta \geq 2}}\langle [\on{pt}]\psi^{D\cdot \beta-2}\rangle_{0,1,\beta}^Xy^\beta (D\cdot \beta-1)!
\]
and $y=y(q)$ is the inverse of the mirror map (\ref{mirror-map}).

\end{theorem}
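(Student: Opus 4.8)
The plan is to use Theorem~\ref{thm-main-1} (or rather its two-point, $n=1$ specialization, which is what Section~\ref{sec: computation} presumably records as a corollary) to rewrite the two-point relative invariants $\langle [1]_1,[\on{pt}]_n\rangle_{0,2,\beta}^{(X,D)}$ appearing in $W$ as genus zero orbifold invariants of the multi-root stack of $(P,X_\infty+X_\sigma)$ with one relative marking, and then to push this further via Theorem~\ref{thm-local-orb} (or the direct degeneration argument) to a one-point \emph{local} invariant of a rank two toric bundle over $X$, namely $\mathcal O_P(-X_\infty)\oplus\mathcal O_P(-X_\infty)$ over $P=\mathbb P(\mathcal O_X(-D)\oplus\mathcal O_X)$. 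Since $D=-K_X$, one checks that this local target has trivial canonical class along the fibre directions in the right way, so its one-point genus zero invariants are governed by Brown's mirror theorem for toric bundles \cite{Brown}. First I would set up the explicit identification: spell out which curve class $\beta+kf$ on $P$ corresponds to $\beta$ on $X$ with the prescribed contact order $k=D\cdot\beta-1$ at $p_0$ (here $n+1=D\cdot\beta$, so $k_1=n$ and $k_0=1$ matches the $[1]_1$ insertion), and record the sign $(-1)^{k_0}=-1$.

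Next I would write down the $I$-function of the local toric bundle. Using Brown's theorem, the $I$-function of $\mathcal O_P(-X_\infty)^{\oplus 2}\to P$ is obtained from the $I$-function of $P$ by inserting the twisting factors $\prod(-X_\infty - m z)$, and the $I$-function of the $\mathbb P^1$-bundle $P\to X$ is in turn the $I$-function of $X$ with the extra hypergeometric factor in the fibre variable coming from $\mathcal O_X(-D)\oplus\mathcal O_X$. Concatenating, the fibre-degree-$k$ part of the resulting $I$-function produces, after extracting the appropriate component and using the string/divisor equations to strip the one remaining insertion, exactly the generating series $\sum_\beta (D\cdot\beta-1)!\,\langle[\on{pt}]\psi^{D\cdot\beta-2}\rangle_{0,1,\beta}^X y^\beta$; the combinatorial factor $(D\cdot\beta-1)!$ and the power $\psi^{D\cdot\beta-2}$ should fall out of the product of the fibre hypergeometric factor and the two twisting factors evaluated at fibre degree $D\cdot\beta-1$. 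The $\exp$ and the $x^{-1}$ prefactor appear because, after the mirror transformation, the relevant entry of the fundamental solution matrix of the quantum $D$-module of the bundle is an exponential of a primitive (this is the standard ``string equation exponentiates the point class direction'' phenomenon, and matches the shape $W=x^{-1}\exp(g)$); the variable $x$ is the fibre coordinate/the infinity divisor class.

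The mirror map: I would identify the relative mirror map (\ref{mirror-map}) with the restriction of Brown's mirror map for the toric bundle to the relevant degree-one-in-$z$ part, and check that the change of variables $y=y(q)$ is exactly the inverse of the stated mirror map, so that after substitution the closed formula emerges. Throughout, the divisor equation in the fibre class $f$ (which contributes the factors $D\cdot\beta$, $n$, etc.\ visible in the definitions of $W$ and $g$) and the dilaton/string equations will be used repeatedly to reconcile the normalizations on the two sides.

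The main obstacle, I expect, is bookkeeping rather than anything conceptually deep: tracking the twisting classes, the $z$-expansions, and the mirror-map corrections precisely enough that the descendant insertion $\psi^{D\cdot\beta-2}$ and the factorial $(D\cdot\beta-1)!$ come out with exactly the right exponents and coefficients, and verifying that the ambient/non-ambient subtleties (the insertion $[\on{pt}]_n\in H^*(D)$ versus classes pulled back from $X$) do not obstruct the application of Theorem~\ref{thm-main-1} — in this $D=-K_X$, $\dim$-constrained case the point class on $D$ is ambient, so this should be fine, but it needs to be stated. A secondary subtlety is making sure the ``limiting'' orbifold invariant of the multi-root stack genuinely reduces to the local invariant here (this is where \cite{BNTY} and the $D\cdot\beta\to$ large-$r$ stabilization is invoked), but since $k_1=n$ is the only nonzero contact order and the virtual dimension is pinned down, the reduction is the maximal-contact one already in the literature.
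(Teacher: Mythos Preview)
Your overall architecture matches the paper: reduce the two-point relative invariant to a one-point local invariant of $E:=\mathcal O_P(-X_\infty)\oplus\mathcal O_P(-X_\infty)$ via Theorem~\ref{thm-main-1} and Corollary~\ref{cor-2-pt}, then compute with Brown's mirror theorem. But there is a genuine missing step, and it is exactly the step that makes the computation close up.

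You propose to take $k_0=1$ (the $[1]_1$ marking absorbed) and $k_1=n$ (the $[\on{pt}]_n$ marking kept relative). This yields the local invariant $\langle[\on{pt}_P]\rangle^E_{0,1,\beta+nf}$ with fibre degree $n=D\cdot\beta-1$ \emph{varying with $\beta$}, and you still carry an external factor of $n$ from the definition of $W$. Since $E$ is local Calabi--Yau, there is no virtual-dimension constraint linking $n$ to $\beta$, so the ``diagonal'' slice $n=D\cdot\beta-1$ is not singled out by the $J$-function; your generating series is not a natural coefficient of $J_E$, and your appeal to a ``string equation exponentiates the point class direction'' phenomenon does not produce the $\exp$ here. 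The paper instead first proves, via the WDVV equation for $(X,D)$, the swap identity
\[
n\,\langle [\on{pt}]_n,[1]_1\rangle_{0,2,\beta}^{(X,D)}=\tfrac{1}{n}\,\langle [1]_n,[\on{pt}]_1\rangle_{0,2,\beta}^{(X,D)},
\]
and only \emph{then} applies Theorem~\ref{thm-main-1} with $k_0=n$, $k_1=1$. Now the fibre degree is the fixed value $1$, so after the divisor equation one gets $(-1)^{n+1}\langle[\on{pt}_P]\rangle^E_{0,1,\beta+f}$. The sum over $\beta$ is then literally the $q_0^1$-coefficient of the $[h^2]/z$-part of $J_E$; Brown's mirror theorem identifies this with the corresponding coefficient of $I_E$, which is simply $y_0$. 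Hence $1+\sum_\beta(\cdots)q^\beta=y_0/q_0$, and the exponential appears directly from the mirror map $q_0=y_0\exp(-g(-y))$, not from any fundamental-solution argument. Without the WDVV swap your route does not reach this clean endpoint.
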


\begin{remark}
    By Theorem \ref{thm-main-1}, the invariants in Theorem \ref{thm-lg-potential} are equal to invariants of $(P,X_\infty+X_\sigma)$ with contact order $(1,1)$. Geometrically, one may think of these invariants as counting holomorphic disks in the local Calabi--Yau $K_X$ with Maslov index two. This agrees with the conjecture in \cite{GRZ} that the proper Landau--Ginzburg potential is the open mirror map. Of course, we do not actually consider open Gromov--Witten invariants here. Instead, we take a purely algebro-geometric point of view by considering relative invariants with maximal contacts as counting disks in the complement $P\setminus X_\infty=K_X$. One can expect that the open/closed duality of \cite{chan}, \cite{LLW11}, \cite{CCLT} also holds beyond the toric setting, then the conjecture of \cite{GRZ} will also be true beyond the toric setting.
\end{remark}

\begin{remark}
    In \cite{Wang}*{Theorem 1.2}, the first author showed that these two-point relative invariants of $(X,D)$ coincide with one-point absolute invariants of $P:=\mathbb P(\mathcal O_{X}(-D)\oplus \mathcal O)$. This is a special case as one of the relative markings has contact order $1$. In general, two-point relative invariants of $(X,D)$ are not the same as the one-point absolute invariants of $P$ (in particular, the virtual dimensions of the moduli spaces are not the same.) 
\end{remark}

\begin{remark}
    In \cite{You20}*{Theorem 1.1}, the second author computed the instanton corrections (open Gromov--Witten invariants) of toric Calabi--Yau orbifolds in terms of one-point relative Gromov--Witten invariants of a (partial) compactification. In particular, when the toric Calabi--Yau is the total space of a canonical bundle $K_X$ of a semi-Fano toric manifold, the instanton corrections are in terms of one-point relative Gromov--Witten invariants of $(P,X_{\infty})$. It is not hard to observe that the generating function of one-point relative Gromov--Witten invariants computed in \cite{You20}*{Theorem 1.1} coincides with the expression for the proper Landau--Ginzburg potential in \cite{You22}*{Theorem 1.6} (see \cite{GRZ}*{Theorem 1.3} for the computation for toric del Pezzo surfaces via a different method), see also Theorem \ref{thm-lg-potential}:
    \[
    W=x^{-1}\exp\left(g(y(q))\right),
    \]
    up to sign. This also works only when one of the two relative markings has contact order $1$. Nevertheless, this special case served as an early motivation for this project. 
\end{remark}

\subsection{Future directions}

There are lots of natural questions that arise. In addition to some applications such as the computation of relative invariants, we also would like to mention some questions that are directly related to possible generalizations of Theorem \ref{thm-main-1}.
\begin{itemize}
    \item Is there a generalization to snc pairs to give a version of the local-log correspondence, hence effectively determining log Gromov--Witten invariants? 
    There are attempts to generalize the local-relative correspondence to snc pairs. However, the naive generalizations proposed in \cite{vGGR}*{Conjecture 1.4} and \cite{TY20b}*{Conjecture 1.8} are not true in general, as a counter-example is provided in \cite{NR}*{Theorem X}. With the counter-example, one may believe that there is not a generalization of the local-log correspondence to snc pairs. It may be interesting to investigate the local-log correspondence for snc pairs in our setting (or under mild modifications). 
    \item Is there a generalization to the higher genus relative Gromov--Witten invariants? An identity (possibly complicated) certainly exists via degeneration and virtual localization. It would be interesting to see how to package the localization computation to study the structures of relative Gromov--Witten theory, similar to the work of \cite{BFGW}.
    \item Is it possible to allow invariants with negative contact orders and determine the genus zero relative Gromov--Witten theory of \cite{FWY}?

\end{itemize}

\section*{Acknowledgement}

The authors would like to thank Mark Gross, Navid Nabijou, Dhruv Ranganathan and Longting Wu for helpful discussions. The authors would also like to thank the anonymous referees for their detailed comments, which greatly improved the exposition of this article.

This project has received funding from the European Union’s Horizon 2020 research and innovation programme under the Marie Skłodowska-Curie grant agreement 101025386, and from the European Research Council (ERC) under the MSAG grant agreement 101019465. The first author would particularly like to express his sincere gratitude to Mark Gross, his PhD supervisor at the time, for his supervision and support.

\section{The degeneration formula for orbifolds}

We briefly review the degeneration formula for the orbifold Gromov--Witten theory in \cite{AF}*{Proposition 5.9.1} when the central fiber of the degeneration has two irreducible components. In particular, we will apply the degeneration formula to the orbifold Gromov--Witten theory of multi-root stacks defined in \cite{TY20c}. We also refer to \cite{You24}*{Section 4}.

\begin{defn}[Definition 4.6, \cite{Li1}]
{\emph {An admissible graph}} $\Gamma$ is a graph without edges plus the following data.
\begin{enumerate}
    \item An ordered collection of legs.
    \item An ordered collection of edges decorated by the contact orders.
    \item A function $\g:V(\Gamma)\rightarrow \mathbb Z_{\geq 0}$.
    \item A function $b:V(\Gamma)\rightarrow H_2(X,\mathbb Z)$.
\end{enumerate}
\end{defn}

Let $\mathcal X$ be a multi-root stack of $X$ along an snc divisor $D_1+\cdots+D_m$ with roots given by $\vec r=(r_1,\ldots,r_m)$ such that the roots $r_j$ are sufficiently large for all $j\in\{1,\ldots,m\}$. Consider a degeneration of $\mathcal X$, a one-parameter family with a smooth total space, such that the central fiber has two irreducible components $\mathcal X_1\cup_{\mathcal D} \mathcal X_2$. We denote the specialisation of $D_1+\cdots+D_m$ to each irreducible component $\mathcal X_i$ of the degeneration by $D_{1i}+\cdots+D_{mi}$. We assume that $D+D_{1i}+\cdots+D_{mi}$ is snc for $i=1,2$, where $D$ denotes the underlying smooth variety of $\mathcal D$.

%Following \cite{Li1}*{Definition 4.11}, we consider admissible triples $(\Gamma_1,\Gamma_2,I)$, where $\Gamma_1$ and $\Gamma_2$ are admissible graphs with matching roots. Roots of $\Gamma_1$ and $\Gamma_2$ pair to form edges. Since we also use the term ``root stacks'', to avoid any confusion, we may also just refer to roots as edges or edges associated with certain vertices.

Suppose that there are $l_1,l_2$ legs in $\Gamma_1, \Gamma_2$, respectively. We have an order preserving inclusion $I:[l_1]\rightarrow [l_1+l_2]$, where $[l]$ means a set of $l$ elements (following the notation in \cite{Li1}*{Definition 4.11}). The legs are used to record the distribution of the markings. We may simply mention the associated markings when we discuss the legs.

For an admissible triple $\Gamma=(\Gamma_1,\Gamma_2,I)$, let
\[
\vec{\eta}=(\vec{\eta}^1,\ldots,\vec{\eta}^{|\eta|}),
\]
where
\[
\vec{\eta}^i=(\eta^i_{0},\eta^i_1,\ldots,\eta^i_{m})
\]
and $|\eta|$ is the length of the partition,
such that $\eta^i_0\in \mathbb Q_{>0}$ is used to record the contact order of the marking with respect to the relative divisor $\mathcal D$. Furthermore, $\eta^i_j\in \mathbb Z$, for $j=1,\ldots,m$ is used to record the contact order $\eta^i_j$ with respect to the divisors $D_j$.  Note that the divisors $D_j$ are orbifold divisors. When we say contact order $\eta^i_j$, we actually mean age $\eta^i_j/r_j$ if $\eta^i_j\geq 0$ or $1+\eta^i_j/r_j$ if $\eta^i_j< 0$, for sufficiently large $r_j$. We remark that $\eta^i_0$ is always positive, but $\eta^i_j$, for $j=1,\ldots,m$, can be either positive, negative, or zero. If the $i$-th edge of $\vec{\eta}$ has contact order 
\[
\vec{\eta}^i=(\eta^i_{0},\eta^i_1,\ldots,\eta^i_{m}),
\]
then the $i$-th edge of $\vec{\eta}^\vee$ has contact order
\[
\vec{\eta}^{i,\vee}=(\eta^i_{0},-\eta^i_1,\ldots,-\eta^i_{m}).
\]
Negative signs come from the orbifold pairing in the orbifold degeneration formula.

\begin{remark}\label{rem-ages}
    We briefly recall that the relative-orbifold correspondence of \cite{ACW}, \cite{TY18}, \cite{FWY} states that genus zero relative Gromov--Witten invariants of a smooth pair $(X,D)$ and orbifold invariants of the root stack $X_{D,r}$ coincide for $r$ sufficiently large. Relative markings of the pair $(X,D)$ and orbifold markings of the root stack $X_{D,r}$ are related as follows. The positive contact order $k\in \mathbb Z_{>0}$ corresponds to a small age $k/r$ for $r\gg 1$; the negative contact order $k\in \mathbb Z_{<0}$ corresponds to a large age $1+k/r$. Invariants of mid-age are studied in \cite{You21} and \cite{You24}. A pair of mid-age markings have ages $k_a/r$ and $k_b/r$ such that $k_a, k_b \gg |D\cdot\beta|$ and $k_a+k_b=r+c$ for a constant $c\in \mathbb Z$. By the definition of the orbifold invariants, there can not be just one marking with mid-age. In other words, when the topological data (e.g. genus, curve class, number of markings) are fixed, if there is one marking with mid-age (i.e. with age $k/r$ such that $k\gg |D\cdot\beta|$), then there is at least one more marking with mid-age. This is generalized to an snc pair $(X,D=D_1+\cdots+D_m)$ in \cite{TY20c} and \cite{You24} where genus zero orbifold invariants of the multi-root stack $X_{D,\vec r}$ is independent of $\vec r=(r_1,\ldots, r_m)$ when $r_i$'s are sufficiently large for $i=1,\ldots,m$. These invariants are now called orbifold invariants of the snc pair $(X,D)$.
\end{remark}

\begin{remark}
    The words ``relative" and ``orbifold" are interchangeable in our context. We adopt the following convention. When we deal with smooth divisors, we use the word ``relative" because the invariants are actually relative invariants. When the divisor is snc, we use the word ``orbifold" and the invariants are orbifold invariants in \cite{TY20c}.   
\end{remark}

We define $\overline{M}_{\Gamma}$ by the Cartesian diagram

\begin{equation}\label{cartesian-diagram} \begin{tikzcd}
\overline{M}_\Gamma \arrow{r}{} \arrow[swap]{d}{} & \overline{M}^\bullet_{\Gamma_1}(\mathcal X_1,\mathcal {D})\times \overline{M}^\bullet_{\Gamma_2}(\mathcal X_2,\mathcal {D}) \arrow{d}{} \\%
 \mathcal {\bar{I}D}^{|\eta|} \arrow{r}{\Delta}& (\mathcal {\bar{I}D}\times \mathcal {\bar{I}D})^{|\eta|},
\end{tikzcd}
\end{equation}
where  $\overline{M}^\bullet_{\Gamma_i}(\mathcal X_i,\mathcal {D})$ is the moduli space of relative stable maps to the pair $(\mathcal X_i,\mathcal {D})$. Note that we have been hiding the additional root structure along $D_{ji}$ in $\mathcal X_i$. Let $X_i$ be the underlying variety of $\mathcal X_i$, then $\mathcal X_i$ is a multi-root stack of $X_1$ along $D_{1i}+\cdots+D_{mi}$. When we take roots to be sufficiently large, we will also write the pair $(\mathcal X_i,\mathcal D)$ as $(X_i, D+D_{1i}+\cdots+D_{mi})$. This is what we will use in Section \ref{sec:proof}.

The degeneration formula \cite{AF}*{Proposition 5.9.1} is 
\begin{equation}
    \begin{split}
        & [\overline{M}_{0,n,\beta}(\mathcal X)]^{\on{vir}}\\
        = &\sum \frac{\prod \eta^i_0 }{|\Aut(\Gamma)|}\Delta^!\left( [\overline{M}^\bullet_{\Gamma_1}(\mathcal X_1,\mathcal D)]^{\on{vir}}\times [\overline{M}^\bullet_{\Gamma_2}(\mathcal X_2,\mathcal D)]^{\on{vir}} \right),
    \end{split}
\end{equation}
where $\Delta^!$ is the Gysin map; the sum ranges over all admissible triples $\Gamma=(\Gamma_1,\Gamma_2,I)$; and $|\Aut(\Gamma)|$ is the order of the automorphism group $\Aut(\Gamma)$.

\section{Proof of the main theorem}\label{sec:proof}

Recall that we would like to prove the following identity in Theorem \ref{thm-main-1}:
\begin{equation}
          \begin{split}
          & \pi_{\on{rel},*}\left([\overline{M}_{0,\vec k,n_0,\beta}(X,D)]^{\on{vir}}\right)\\
          =&(-1)^{k_0} u_*\left( [\overline{M}_{0,(k),n_0,\beta+ (\sum_{i=1}^nk_i)f}(P, X_{\infty}+X_{\sigma})]^{\on{vir}}\cap\on{ev}_0^*(X_\infty-\pi^*D)\right)\\       
        \in & A_*(\overline{M}_{0,n+n_0+1,\beta}(X)\times_{X^{n}} D^{n}),
 \end{split}
        \end{equation}
      where the LHS is the virtual cycle for the moduli space of  genus zero relative stable maps to $(X,D)$ with $(n+1)$ relative markings $k_0,k_1,\ldots,k_n$ and $n_0$ interior markings; the RHS is the virtual cycle for the moduli space of genus zero orbifold stable maps of multi-root stacks of $(P, X_{\infty}+X_{\sigma})$ defined in \cite{TY20c}*{Section 3}. The RHS has $(n_0+1)$ interior marking and $n$-orbifold markings with contact orders $(k_i,k_i)$, $i=1,\ldots, n$.

\subsection{Degeneration}\label{sec:deg}

We first fix our notation here.
\begin{itemize}
    \item $P:=\mathbb P(\mathcal O_X(-D)\oplus \mathcal O_X)$;
    \item $X_{\infty}:=\mathbb P(\mathcal O_X(-D))\subset P$ is the divisor that we add in at the infinity to compactify $\mathcal O_{X}(-D)$.
    \item $X_{\sigma}\subset P$, as defined in \cite{FTY}*{Section 3.1}, is the zero locus of the section of $\mathcal O_P(1)$ defined as the graph of a section $\sigma$ of $\mathcal O_X(D)$, where the  vanishing locus of $\sigma$ is the divisor $D$. In other words, the section $\sigma$ defines a homomorphism of sheaves 
    \[f=(\sigma\oplus 1)^*\in \Hom_X(\mathcal O_X(-D)\oplus\cO_X,\cO_X).\]
The section $f$ determines a section 
\[
\tilde{f}\in H^0(P,\cO_P(1)),
\]
by the canonical identifications
\[
\Hom_X(\mathcal O_X(-D)\oplus\cO_X,\cO_X)=H^0(X,(\mathcal O_X(-D)\oplus\cO_X)^*)=H^0(X,\pi_*\cO_P(1))=H^0(P,\cO_P(1)).
\]
Then
\[
X_\sigma:=\tilde{f}^{-1}(0)\cong X.
\]
We remark that the intersection $X_{\infty}\cap X_{\sigma}$ is $D$ and the divisors $[X_\infty]$ and $[X_\sigma]$ are linearly equivalent.
    \item $Y:=\mathbb P(N_{D/X}\oplus \mathcal O_D)=\mathbb P(\mathcal O_{D}(-D)\oplus \mathcal O_D)$. 
    \item $D_0\subset Y$ is the $0$-section of the normal bundle of $D$ and $D_\infty$ is the $\infty$-section of $Y=\mathbb P(\mathcal O_{D}(-D)\oplus \mathcal O)$.
    \item $P_Y:=\mathbb P(\mathcal O_Y(-D_\infty)\oplus \mathcal O)$.
    \item $Y_\infty$ is the divisor that we add in at infinity to compactify $\mathcal O_Y(-D_\infty)$. 
    \item $Y_\sigma\subset P_Y$ is defined similarly to $X_\sigma$. In particular, $Y_\sigma$ is linear equivalent to $Y_\infty$ and $Y_\sigma\cap Y_\infty=D$, where the intersection is the $\infty$-section of the bundle $Y_\infty$ (which is also the $\infty$-section of the bundle $Y_\sigma$).
\end{itemize}
      
We consider the degeneration in \cite{Wang}*{Section 4}. 
We first consider the degeneration to the normal cone of $D$ in $X$. Let 
\[
\mathfrak X=\on{Bl}_{D\times\{0\}}(X\times \mathbb A^1)\rightarrow \mathbb A^1
\]
and let $\mathfrak D$ be the strict transform of $D\times \mathbb A^1$ in $\mathfrak X$. Then the generic fiber of $\mathfrak X\rightarrow \mathbb A^1$ is $X$ and the central fiber is 
\[
X\cup_D Y,
\]
where we consider $D\subset Y$ as the $0$-section $D_0$ of the normal bundle of $D$.

Now we consider 
\begin{align}\label{deg}
\mathfrak p: \mathfrak P:=\mathbb P(\mathcal O_{\mathfrak X}(-\mathfrak D)\oplus \mathcal O) \rightarrow \mathbb A^1.
\end{align}
For the degeneration $\mathfrak p: \mathfrak P\rightarrow \mathbb A^1$, the generic fiber $\mathfrak p^{-1}(t)$ is $P$ and the central fiber $\mathfrak p^{-1}(0)$ is 
\[
(X\times \mathbb P^1)\cup_{D\times \mathbb P^1} P_Y.
\]
Therefore, we degenerate the pair $(P,X_\infty+X_\sigma)$ into
\[
(X\times \mathbb P^1, (D\times \mathbb P^1)\cup (X\times \infty)\cup (X\times \on{pt})) \text{ and } (P_Y, (D\times \mathbb P^1)\cup Y_\infty\cup Y_\sigma),
\]
where 
\begin{itemize}
\item $D\times \mathbb P^1$ is the common divisor to $(X\times \mathbb P^1)$ and $P_Y$.
\item $X_\infty$ degenerates to $(X\times \infty)\cup_{D\times \infty} Y_\infty$; $X_\sigma$ degenerates to $(X\times \on{pt})\cup_{D\times \on{pt}} Y_\sigma$. 
\item $Y_\infty \cap Y_\sigma=D$ which is the $\infty$-section of the bundle $Y$. So it is away from the divisor $D \times \mathbb P^1=(X\times \mathbb P^1)\cap P_Y$. And the point $\on{pt}$ is away from $\infty\in \mathbb P^1$.
\end{itemize}
%\textcolor{red}{This degeneration may look like the degeneration to the normal cone of $D=X_\infty\cap X_\sigma$ in $P$. As they both come from blowing up $D\times 0$. But it is different, because in our set-up $D\subset X_0$ in $P$ not $D\subset X_\infty$. It is different $D$ in $P$.....Actually, I do not know if our degeneration is from blowing up $D\times \mathbb A^1$ in $P\times \mathbb A^1$, maybe it is different??? It is not important for this paper anyway. It is just a remark.}

As we consider the divisors $X_\infty$ and $X_\sigma$ as orbifold divisors, the invariants that we consider here are orbifold invariants of multi-root stacks in \cite{TY20c}. Observe that the central fiber of the degeneration has two irreducible components, so we are able to use the degeneration formula for the orbifold Gromov--Witten theory \cite{AF}*{Proposition 5.9.1}.

%\textcolor{red}{Check: $X_\sigma$ degenerates to $X\times \on{pt}$ and $Y_\sigma$???}

As the invariants are genus zero, the degeneration graphs are trees (no loops) with vertices over $X\times \mathbb P^1$ and $P_Y$, and edges connecting vertices over $X\times\mathbb P^1$ to vertices over $P_Y$. 
\begin{itemize}
    \item Let $V$ be the set of vertices;
    \item let $E$ be the set of edges.
\end{itemize}
Then we have the following relation:
\begin{align*}
    |V|-|E|=1.
\end{align*}

Recall that 
\[
\overline{M}_{0,(k),n_0,\beta+\sum_{i=1}^nk_if}\left(P,X_{\infty}+X_{\sigma}\right)
\]
is  the moduli space of genus zero orbifold stable maps to the multi-root stack associated to the pair $\left(P,X_{\infty}+X_{\sigma}\right)$ with degree $\beta+\sum_{i=1}^nk_if\in H_2(P)$ and $(n+1)$-markings with contact orders 
\[
(0,0),(k_1,k_1),(k_2,k_2),\ldots,(k_n,k_n)
\]
 and $n_0$ additional interior markings.
The degeneration formula expresses \[
u_*\left( [\overline{M}_{0,(k),n_0,\beta+ (\sum_{i=1}^nk_i)f}(P, X_{\infty}+X_{\sigma})]^{\on{vir}}\cap\on{ev}_0^*(X_\infty-\pi^*D)\right)
\]
in terms of the sum
\begin{equation}\label{deg-formula}
    \begin{split}
        \sum \frac{\prod_{i}\eta^i_0}{|\Aut(\Gamma)|}u_{\Gamma,*}\Delta^!\left([\overline{M}^\bullet_{\Gamma_1}((\mathcal X_1,\mathcal D_1)]^{\on{vir}}\times [\overline{M}^\bullet_{\Gamma_2}(\mathcal X_2,\mathcal D_2)]^{\on{vir}}\right)\cap\on{ev}_0^*(X_\infty-\pi^*D),
    \end{split}
\end{equation}
where 
\begin{itemize}
%    \item $u_\Gamma$ is.....
\item the sum ranges over all admissible triples $\Gamma=(\Gamma_1,\Gamma_2,I)$.
\item $u: \overline{M}_{0,(k),n_0,\beta+ (\sum_{i=1}^nk_i)f}(P, X_{\infty}+X_{\sigma}) \rightarrow \overline{M}_{0,n+n_0,\beta}(X)\times_{X^n}D^n$ is the forgetful map.  Similarly, $u_\Gamma: \overline{M}_\Gamma\rightarrow \overline{M}_{0,n+n_0,\beta}(X)\times_{X^n}D^n$ is the forgetful map. We recall that $\overline{M}_\Gamma$ is defined by the Cartesian diagram (\ref{cartesian-diagram}).
\item The blow-down map $\mathfrak X\rightarrow X\times \mathbb A^1$ induces a natural map $$\mathfrak p:\mathfrak P=\mathbb P(\mathcal O_{\mathfrak X}(-\mathfrak D)\oplus \mathcal O) \rightarrow P\times \mathbb A^1.$$ The class $X_\infty-\pi^*D\in H^2(P)$ is lifted by pull-back via the first factor of the map $\mathfrak p$. We use $\on{ev}_0^*(X_\infty-\pi^*D)$ in (\ref{deg-formula}) to denote the constraint that is obtained by lifting $\on{ev}_0^*(X_\infty-\pi^*D)$ via this pullback and then restricted to the central fiber.
    \item $(\mathcal X_1,\mathcal D_1)$ stands for the pair $(X\times \mathbb P^1, (D\times \mathbb P^1)\cup (X\times \infty)\cup (X\times \on{pt}))$, where $D\times \mathbb P^1$ is the relative divisor and $X\times \infty$ and $X\times \on{pt}$ are orbifold divisors, where $\on{pt}$ is a point away from $\infty\in \mathbb P^1$. For vertices in $\Gamma_1$, we will refer to them as vertices on the $\Gamma_1$-side or vertices over $(X\times \mathbb P^1)$.
    \item $(\mathcal X_2,\mathcal D_2)$ stands for the pair $(P_Y, (D\times \mathbb P^1)\cup Y_\infty\cup Y_\sigma)$, where $D\times \mathbb P^1$ is the relative divisor and $Y_\infty$ and $Y_\sigma$ are orbifold divisors. For vertices in $\Gamma_2$, we will refer to them as vertices on the $\Gamma_2$-side or vertices over $P_Y$.
    \item Taking into account the orbifold conditions along $X_\infty$ and $X_\sigma$, 
the common divisor of $\mathcal X_1$ and $\mathcal X_2$ is actually a root stack 
of $D\times \mathbb P^1$ along $D\times \infty$ and $D\times \on{pt}$. 
When matching the edges of $\Gamma_1$ and $\Gamma_2$, the ages along 
$D\times \infty$ and $D\times \on{pt}$ should be inverse to each other 
on the $\Gamma_1$-side and on the $\Gamma_2$-side.

More precisely, if the $i$-th edge of $\vec \eta$ has contact order (age)
\[
\vec \eta^i=(\eta^i_0,\eta^i_1,\eta^i_2)
\]
along the divisors 
$(D\times \mathbb P^1)\cup (X\times \infty)\cup (X\times 0)$, 
then the $i$-th edge of $\vec \eta^\vee$ has contact order (age)
\[
\vec \eta^{i,\vee}=(\eta^i_0,-\eta^i_1,-\eta^i_2)
\]
along the divisors 
$(D\times \mathbb P^1)\cup Y_\infty\cup Y_\sigma$.

For each edge $e_i$, we obtain evaluation maps
\[
ev^1_{e_i} : M_{\Gamma_1} \longrightarrow 
\mathcal I(D\times \mathbb P^1),
\qquad
ev^2_{e_i} : M_{\Gamma_2} \longrightarrow 
\mathcal I(D\times \mathbb P^1),
\]
to the inertia stack of the root stack over $D\times \mathbb P^1$. 
Let $ev_\eta^1$ and $ev_\eta^2$ denote the products of these evaluation maps 
over all edges.

The moduli space associated with the admissible triple $\Gamma$ is defined 
as the fiber product
\[
M_\Gamma
=
M_{\Gamma_1}
\times_{\mathcal I(D\times \mathbb P^1)^{|\eta|}}
M_{\Gamma_2}.
\]
Its virtual fundamental class is obtained by the refined Gysin pullback 
along the diagonal
\[
[M_\Gamma]^{\mathrm{vir}}
=
\Delta^!
\big(
[M_{\Gamma_1}]^{\mathrm{vir}}
\times
[M_{\Gamma_2}]^{\mathrm{vir}}
\big),
\]
where $\Delta$ denotes the diagonal morphism of 
$\mathcal I(D\times \mathbb P^1)^{|\eta|}$. 
In particular, the matching of insertions along the edges is implemented 
entirely through this refined Gysin pullback, and no decomposition of the 
diagonal in Chow is used.

Because the inertia stack of the root stack over $D\times \mathbb P^1$ 
has a product structure, the evaluation maps factor through the projections 
to $D$ and to the orbifold pair $(\mathbb P^1,0+\infty)$. 
This allows us to describe the relevant state space in terms of the state 
space for the relative Gromov--Witten theory of $(\mathbb P^1,0+\infty)$ 
defined in \cite{FWY}*{Section 7.1}:
\[
\mathfrak H (\mathbb P^1,0+\infty)=\oplus_{i,j\in \mathbb Z} \mathfrak H_{i,j},
\]
where 
\begin{equation}
\mathfrak H_{i,j}:=
\left\{
\begin{array}{cc}
H^*(\mathbb P^1) & (i,j)=(0,0);\\
H^*([\on{pt}]) & (i,j)\neq(0,0)\ \text{and}\ ij=0;\\
0 & ij\neq 0.
\end{array}
\right.
\end{equation}

We remark that the degree of the cohomology classes on the twisted sectors 
are shifted:
\begin{equation}
\deg([1]_{(i,0)})=
\left\{
\begin{array}{cc}
0 & i\ge 0,\\
2 & i<0,
\end{array}
\right.
\end{equation}
and
\begin{equation}
\deg([1]_{(0,j)})=
\left\{
\begin{array}{cc}
0 & j\ge 0,\\
2 & j<0.
\end{array}
\right.
\end{equation}
\end{itemize} 

Let $\{p_i\}_{i=0}^{n+n_0}$ be the set of markings for orbifold stable maps to $(P,X_\infty+X_\sigma)$. We remark that the markings $\{p_i\}_{i=1}^n$ are orbifold markings with respect to both $X_\infty$ and $X_\sigma$. These orbifold markings are mapped to $X_\infty\cap X_\sigma=D$. Under the degeneration (\ref{deg}) these orbifold markings are distributed to the $P_Y$-side. 

Now we consider the marking $p_0$. By assumption $p_0$ maps to $X_0$, so it can be distributed to either the $(X\times \mathbb P^1)$-side or the $P_Y$-side. Later, when we analyse the degeneration graph, we will show that if $p_0$ is on the $(X\times \mathbb P^1)$-side, the virtual cycle will vanish. As a result, we will show that $p_0$ will also be distributed to the $P_Y$-side.

In general, in the orbifold degeneration formula, there are no restrictions to the ages along the orbifold divisors as long as the orbifold pairing condition is satisfied for each edge. For multi-root stacks, that means the ages $\eta^i_j$ are not necessarily small ages or large ages and there is a possibility that orbifold invariants with mid-ages will appear. We refer to Remark \ref{rem-ages} for a brief explanation of the meaning of small ages, large ages, and mid-ages. We claim that mid-age invariants will not appear in our setting in genus zero.

\begin{lemma}\label{lemma-mid-age}
No edges with mid-ages with respect to $X\times \infty, X\times \on{pt}, Y_\infty$ or  $Y_\sigma$ will appear in the degeneration formula.    
\end{lemma}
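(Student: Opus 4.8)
The plan is to analyze how curves in the central fiber distribute, and to show that the geometry of the two pieces of the degeneration forces all contact orders along the relevant orbifold divisors to be "small ages" (equivalently, the contact order data never exceeds what is permitted for a genuine tangency), so no mid-age edges can occur. The key point is that the divisors $X\times\infty$, $X\times\on{pt}$, $Y_\infty$, $Y_\sigma$ are all pulled back, via the $\mathbb P^1$-bundle structures $P\to X$ and $P_Y\to Y$, from (or are fiberwise sections of) the respective bases, so the intersection number of a curve class with each of them is bounded by the fiber-class coefficient of that curve class; the total fiber-class coefficient over all of $\mathfrak P$ is $\sum_{i=1}^n k_i$, which is fixed. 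First I would set up the notation for a fixed admissible triple contributing to the degeneration formula and record, for each vertex $v\in V$, the curve class $\beta_v$ and its decomposition into a base class plus multiples of the various fiber classes.

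Next I would argue vertex by vertex. For a vertex over $X\times\mathbb P^1$: since $X\times\infty$ and $X\times\on{pt}$ are fibers of the projection to $X$ and $D\times\mathbb P^1$ is pulled back from $X$ (it is $\iota(D)\times\mathbb P^1$), the contact order of any edge or marking adjacent to such a vertex along $X\times\infty$ or $X\times\on{pt}$ is controlled by the $\mathbb P^1$-degree of $\beta_v$, which is genus-zero small. More precisely, a genus zero stable map component meeting $X\times\infty$ (resp. $X\times\on{pt}$) with total tangency at its special points at most its $\mathbb P^1$-degree, and since the two sections are disjoint one gets that each individual contact order is at most the degree, hence a small age for $r$ large. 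For a vertex over $P_Y$: here $Y_\infty$ and $Y_\sigma$ are the analogous sections of the $\mathbb P^1$-bundle $P_Y\to Y$, so the same argument applies with the $P_Y\to Y$ fiber degree in place of the $\mathbb P^1$-degree. The subtlety is that $Y$ itself is a $\mathbb P^1$-bundle over $D$ with sections $D_0,D_\infty$, but $D_0,D_\infty$ are not among the four divisors in the lemma, so this second bundle structure does not create a problem; only $D\times\mathbb P^1=D_0$-type behavior is relevant and it is handled separately (it is the splitting divisor $\mathcal D$, whose contact orders $\eta^i_0$ are positive rationals by definition, not ages to be bounded).

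Then I would assemble the local bounds into a global one: summing the $\mathbb P^1$-degrees of all vertices over $X\times\mathbb P^1$ and all $P_Y\to Y$-fiber degrees over $P_Y$, and using that edges split these degrees additively and that the total such degree in the curve class $\beta+(\sum_{i=1}^n k_i)f$ is exactly $\sum_{i=1}^n k_i$ (there is no fiber class attached to $p_0$), I get that every contact order along $X\times\infty$, $X\times\on{pt}$, $Y_\infty$, $Y_\sigma$ is at most $\sum_{i=1}^n k_i$, which for sufficiently large $r$ is strictly less than $r$ and hence is a small age, never a mid-age. Finally I would invoke the genus zero tree structure: since the dual graph is a tree, a curve cannot "wrap around" to accumulate tangency beyond its fiber degree, ruling out the one scenario (a cycle of components sitting inside a fiber) that could otherwise produce a large or mid age.

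The main obstacle I expect is the case analysis for components that are \emph{entirely contained} in one of the orbifold divisors or in a fiber of $P\to X$ or $P_Y\to Y$: for such components the naive "tangency $\le$ degree" bound via the section-disjointness argument does not literally apply, and one must instead use the genus zero constraint together with the fact that a component contained in, say, $X\times\infty$ contributes no tangency to $X\times\infty$ at all (its intersection is improper and handled by the rel/orbifold formalism of \cite{TY20c}), while its tangency to $X\times\on{pt}$ or to $D\times\mathbb P^1$ is again bounded by a fiber degree. Making this dichotomy precise, and checking it is compatible with the age conventions ($\eta^i_j/r_j$ versus $1+\eta^i_j/r_j$) set up in Section~2, is where the real care is needed.
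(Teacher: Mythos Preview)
Your approach has a genuine gap: it conflates orbifold \emph{ages} at edge markings with geometric \emph{tangency orders}. The bound ``contact order $\leq$ fiber degree'' is valid for honest tangencies, but a mid-age $a/r$ with $a$ comparable to $r$ is \emph{not} a geometric tangency of order $a$; it is purely orbifold data. The constraint that a genus zero twisted stable map to a root stack imposes on the ages along a divisor $D_j$ at a vertex $v$ is only
\[
\sum_{\text{markings of }v} a_i \;\equiv\; D_j\cdot\beta_v \pmod{r_j},
\]
not $\sum a_i = D_j\cdot\beta_v$ or $\sum a_i \leq D_j\cdot\beta_v$. So nothing prevents, say, two edges at a single vertex carrying ages $a/r$ and $(D_j\cdot\beta_v + r - a)/r$ with $a\approx r/2$: both are mid-ages, their sum is congruent to $D_j\cdot\beta_v$, and no intersection-theoretic bound on $\beta_v$ rules this out. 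Your global fiber-degree accounting therefore does not bound individual ages.

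The paper's proof is entirely different and much shorter. It invokes a structural fact from \cite{You24} and \cite{You21}: if a vertex carries a mid-age marking along one of these divisors, it must carry at least \emph{two} such mid-age markings (this is the nontrivial input, coming from an analysis of when root-stack invariants with a single mid-age vanish). Granting that, one follows any chain of mid-age edges through the tree; since the dual graph in genus zero has no cycles, the chain terminates at a leaf vertex carrying exactly one mid-age edge, contradicting the cited fact. No curve-class or fiber-degree bookkeeping is needed.
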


\begin{proof}
Recall that $X_\infty\subset P$ degenerated into $(X\times \infty)\cup Y_\infty$ and $X_\sigma\subset P$ degenerated into $(X\times \on{pt})\cup Y_\sigma$. We will show that there are no edges that has mid-ages along $(X\times \infty)$ and $Y_\infty$. The proof for $(X\times \on{pt})$ and $Y_\sigma$ is identical.

    If one of the vertices is connected to an edge that carries a mid-age along $(X\times \infty)$ and $Y_\infty$ , then, as we explained in Remark \ref{rem-ages}, there should be at least one more edge that carries a mid-age associated with this vertex. We consider a chain of vertices and edges that are connected by edges with mid-ages along $(X\times \infty)$ and $Y_\infty$. Recall that the degeneration graphs are trees (no loops) in genus zero, then at the end of the chain, there will be a vertex that only contains one mid-age marking. Then it is a contradiction.
\end{proof} 

%\textcolor{red}{For $r_i$, $i>0$, can there be $r_i$-edges each with weight $\pm 1/r_i$? No, as the number of edges will be limited by the relative divisor $D_0$, as each edege carries a positive weight which form a partition of $D_0\cdot \beta$......When $\beta$ is fixed, the number of edges will not increase with $r_i$.}

\subsection{Virtual cycles in the degeneration formula}

\subsubsection{Virtual cycles on the $(X\times \mathbb P^1)$-side}
The virtual cycles
\[
[\overline{M}^\bullet_{\Gamma_1}(\mathcal X_1,\mathcal D_1)]^{\on{vir}}
\]
on the $(X\times \mathbb P^1)$-side are virtual cycles for the moduli spaces of stable maps to the pair $(X\times \mathbb P^1, (D\times \mathbb P^1)\cup (X\times \infty)\cup (X\times \on{pt}))$, where the divisor $D\times \mathbb P^1$ is a relative divisor and the divisors $(X\times \infty)$ and $(X\times \on{pt})$ are orbifold divisors. By the relative-orbifold correspondence for orbifolds in \cite{TY20c}*{Theorem 9}, the virtual cycles here are equal to the virtual cycles for the orbifold Gromov--Witten theory associated with the pair $(X\times \mathbb P^1, (D\times \mathbb P^1)\cup (X\times \infty)\cup (X\times \on{pt}))$ defined in \cite{TY20c}*{Definition 20}. By the product formula for orbifolds \cite{AJT16}*{Theorem 4.3, Theorem 4.6}, these virtual cycles are the product of the Gromov--Witten virtual cycles of $(X,D)$ and $(\mathbb P^1,0\cup\infty)$:
\[
[\overline{M}^\bullet_{\Gamma_1}(X,D)]^{\on{vir}} \text{ and } [\overline{M}^\bullet_{\Gamma_1}(\mathbb P^1,0+\infty)]^{\on{vir}}.
\]

%\textcolor{red}{Write a more precise formula}

%The product formula of \cite{AJT16}*{Theorem 4.3} was also stated in terms of Gromov--Witten classes on $A_*(\overline{M}_{0,n_v},\mathbb Q)$. As our theorem is stated on $A_*(\overline{M}_{0,n+1,\beta}(X)\times_{X^{n+1}} D^{n+1})$, we will consider the pushforward of $[\overline{M}^\bullet_{\Gamma_1}(\mathbb P^1,0+\infty)]^{\on{vir}}$  to $A_*(\overline{M}_{0,n_v},\mathbb Q)$ and then integrate.

Let $v$ be a vertex over $(X\times \mathbb P^1)$, let $E(v)$ be the set of edges attached to $v$. For each vertex $v$ over $X\times \mathbb P^1$, we push forward
\[
[\overline{M}_{\Gamma_1,v}(\mathbb P^1,0+\infty)]^{\on{vir}}\cap \left(\cup_{e\in E(v)}\on{ev}_e^*([\delta_{e}])\right),
\]
where $[\delta_{e}]\in \mathfrak H (\mathbb P^1,0+\infty)$.

\begin{lemma}\label{lemma-X-P-vanish}
     If $\deg([\delta_{e}])=0$ for all $e\in E(v)$ and $p_0$ is not associated with $v$, then the pushforward of 
     \[
[\overline{M}_{\Gamma_1,v}(\mathbb P^1,0+\infty)]^{\on{vir}}\cap \left(\cup_{e\in E(v)}\on{ev}_e^*([\delta_{e}])\right),
\]
to $\overline{M}_{0,n_v}$ vanishes.
\end{lemma}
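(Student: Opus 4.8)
The plan is to show that, under the stated hypotheses, the class we are pushing forward lives in degree strictly above the dimension of $\overline{M}_{0,n_v}$, hence vanishes after pushforward. The key point is a virtual dimension count for the vertex moduli space $\overline{M}_{\Gamma_1,v}(\mathbb P^1,0+\infty)$ combined with the degree shifts recorded in the state space $\mathfrak H(\mathbb P^1,0+\infty)$.

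First I would set up the dimension bookkeeping. Write $d_v$ for the degree of the curve component over $\mathbb P^1$ attached to $v$, and $n_v = |E(v)|$ for the number of edges/roots attached to $v$ (recall $p_0$ is assumed not to be attached to $v$, and the last $n$ markings have been distributed to the $P_Y$-side, so $v$ carries only edge-markings). Since the curve is genus zero and the target is a curve, the virtual dimension of $\overline{M}_{\Gamma_1,v}(\mathbb P^1,0+\infty)$ is the virtual dimension of the corresponding relative moduli space, which by the usual Riemann--Roch count is $1 + d_v + n_v - \sum_{e\in E(v)}(\text{contact order at }e) + (n_v - 3)$ contributions; the crucial structural feature is the constraint imposed by the contact orders along $0+\infty$. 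Because $\mathbb P^1$ has only the two relative points and the total ramification over each of $0$ and $\infty$ must equal $d_v$, once all $n_v$ insertions $[\delta_e]$ are placed in degree-$0$ sectors (i.e.\ on twisted sectors $(i,0)$ or $(0,j)$ with $i,j>0$, or the untwisted sector) the positivity of the contact orders forces the moduli space to be small: in fact the expected dimension of the constrained space drops below $n_v - 3 = \dim \overline{M}_{0,n_v}$. The cleanest way to see this is: each degree-$0$ insertion $[\delta_e]$ either is the fundamental class of $\mathbb P^1$ (contributing a point constraint of codimension $0$ but forcing $\deg(\delta_e)$ to account for the full cohomology) or is a point class on a \emph{positive} twisted sector $(i,0)$ with $i>0$, which is degree $0$ by the shift formula but still imposes the ramification condition $i$. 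Summing the ramification conditions and using $\sum_i i + \sum_j j = d_v$ on each side, one finds that after imposing all constraints the virtual dimension of the cut-down cycle is $\le n_v - 3 - \delta$ for some $\delta \ge 1$ coming from the fact that a genus-zero $n_v$-pointed stable curve mapping to $\mathbb P^1$ with prescribed nonzero ramification over two points and \emph{no} interior point constraints is over-determined (the only rigidity available is the two relative points, which can absorb at most finitely many conditions). Hence the pushforward to $\overline{M}_{0,n_v}$ is a class of negative dimension, i.e.\ zero.

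The main obstacle I anticipate is making the dimension inequality tight and uniform: I need to rule out the possibility that the vertex is ``rigid in the good way'' — for instance a single multiple cover of $\mathbb P^1$ totally ramified over $0$ and $\infty$ with $n_v$ free marked points, whose vertex moduli space is $\overline{M}_{0,n_v}$ itself and whose cut-down class could a priori be of full dimension. The resolution is that in that totally-ramified case the two insertions landing on $0$ and $\infty$ must lie on the \emph{top}-age twisted sectors matching $d_v$, so at least one $[\delta_e]$ is then forced to be a point class on a sector $(i,0)$ with $i < 0$ or $(0,j)$ with $j<0$ after reading off the orbifold pairing condition from the matching of roots on the $\Gamma_1$- and $\Gamma_2$-sides — and such a class has degree $2$, contradicting the hypothesis $\deg([\delta_e]) = 0$ for all $e$. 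So the hypothesis $\deg([\delta_e])=0$ for \emph{all} edges, together with the absence of $p_0$ and the genus-zero tree structure, is exactly what excludes the rigid case. I would carry this out by: (1) writing the precise virtual dimension formula for $\overline{M}_{\Gamma_1,v}(\mathbb P^1,0+\infty)$ using the state space degree shifts; (2) showing the cut-down cycle has dimension $< n_v - 3$ unless every edge insertion carries a negative twisted sector at the relative points; (3) observing the latter contradicts $\deg([\delta_e])=0$; (4) concluding the pushforward vanishes for dimension reasons. Step (2) is where the real work lies.
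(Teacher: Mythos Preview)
Your overall strategy—a dimension count—is the right one, but you have the inequality pointing the wrong way, and this is a genuine gap. The virtual dimension of $\overline{M}_{\Gamma_1,v}(\mathbb P^1,0+\infty)$ is computed cleanly from the log tangent bundle: since $c_1\bigl(T_{\mathbb P^1}(-\log(0+\infty))\bigr)=0$, one has
\[
\operatorname{vdim}\,\overline{M}_{\Gamma_1,v}(\mathbb P^1,0+\infty)
=(1-0)(1-3)+0+n_v=n_v-2,
\]
independently of how the contact orders are distributed. The hypothesis $\deg([\delta_e])=0$ for every $e$ means you are capping with degree-zero classes, so the capped cycle still sits in $A_{n_v-2}$. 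Since $\dim \overline{M}_{0,n_v}=n_v-3$, the proper pushforward lands in $A_{n_v-2}(\overline{M}_{0,n_v})=0$. That is the whole argument: the cycle is too \emph{large}, not too small. Your claim that ``the positivity of the contact orders forces the moduli space to be small'' and that the cut-down cycle has dimension $\le n_v-3-\delta$ is backwards—degree-zero insertions impose no constraints at all. Likewise, a genus-zero cover of $\mathbb P^1$ ramified only over $0$ and $\infty$ with no interior point conditions is \emph{under}-determined (the $\mathbb C^\times$ scaling acts), not over-determined; this is exactly why the fibre of the forgetful map is positive-dimensional and the pushforward dies.

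The paper's proof is the same observation packaged as a citation: relative Gromov--Witten classes of $(\mathbb P^1,0+\infty)$ with only non-negative contacts and no $[\mathrm{pt}]$ insertion vanish, a standard fact from \cite{OP06}*{Section~2.5}. Your ``main obstacle'' paragraph about the totally ramified cover is also off target: that case has $n_v=2$, which is precisely where the dimension comparison $n_v-2>n_v-3$ breaks down, but there $\overline{M}_{0,2}$ is not a moduli space in the usual sense and the statement is handled by convention (or is outside the scope of the lemma as applied later). Once you reverse the inequality, steps (2)--(4) of your plan collapse into a single line.
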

\begin{proof}
Observe that the markings $\{p_i\}_{i=1}^n$ are distributed to $\Gamma_2$ and the markings in $\Gamma_1$ are from the edges.     

For the relative Gromov--Witten class of $\overline{M}^\bullet_{\Gamma_1}(\mathbb P^1,0+\infty)$ with non-negative contact orders to be non-zero, there needs to be at least one insertion with $[\on{pt}]$. It can be easily proved by the degeneration formula or localization, as observed in \cite{OP06}*{Proposition 2.5}. The localization computation is special case of the computation in Lemma \ref{lemma-rubber} (We also state this lemma here because this is what we need on the $X\times \mathbb P^1$-side).

However, the condition $\deg([\delta_{e}])=0$ means that the insertion $\delta_e$ is an identity class and the contact orders with the divisors are zero or positive. Therefore, the pushforward vanishes.
\end{proof}

\subsubsection{Virtual cycles on the $P_Y$-side}
Now we examine the virtual cycles on the $P_Y$-side:
\[
[\overline{M}^\bullet_{\Gamma_2}(\mathcal X_2,\mathcal D_2)]^{\on{vir}}.
\]
%We first consider the virtual cycle associated with a vertex $v$ containing one of the relative markings $p_i$. 

Observe that $Y_\sigma$ is linearly equivalent to $Y_\infty$. The divisor $D\times \mathbb P^1$ is a relative divisor, by the relative-orbifold correspondence for orbifolds in \cite{TY20c}*{Theorem 9}, we can also consider it as an orbifold divisor. Therefore, the virtual cycle becomes the virtual cycle for the moduli space of orbifold stable maps to the pair $(P_Y, (D\times \mathbb P^1)\cup Y_\infty\cup Y_\sigma)$. 

We first need an orbifold version of \cite{FTY}*{Theorem 3.6} in order to analyse the virtual cycles on the $P_Y$-side later. Let $\mathcal X$ be a smooth Deligne--Mumford stack. Let $\mathcal D$ be a smooth divisor of $\mathcal X$ defined by the zero locus of a section of a convex line bundle $\mathcal O_{\mathcal X}(\mathcal D)$. The root stack $\mathcal X_{\mathcal D,r}$ is a hypersurface of the $\mathbb P^1[r]$-bundle $\mathcal P_{\mathcal X_\infty,r}\rightarrow X$, where $\mathcal P:=\mathbb P(\mathcal O_{\mathcal X}(-\mathcal D)\oplus \mathcal O)$ and $\mathcal P_{X_\infty,r}$ is the $r$-th root stack of $\mathcal P$ along the infinity divisor $\mathcal X_\infty$. The hypersurface is the zero locus of a section of $p^*\mathcal O_{\mathcal P}(1)$, where  $p:\mathcal P_{\mathcal X_\infty,r}\rightarrow \mathcal P$ is the projection that forgets the $r$-th root construction along $\mathcal X_\infty$.
\begin{lemma}\label{lemma-FTY-orb}
    Let $\mathcal X$ be a smooth Deligne--Mumford stack. Let $\mathcal D$ be a smooth divisor of $\mathcal X$ defined by the zero locus of a section of a convex line bundle $\mathcal O_{\mathcal X}(\mathcal D)$. Let $\mathcal X_r$ be the gerbe of $r$-th roots of $\mathcal O_{\mathcal X}(\mathcal D)$ over $\mathcal X$. Then we have
    \[
\pi_{1,*}\left[\overline{M}_{0,n,\beta}(\mathcal X_{\mathcal D,r})\right]^{\on{vir}}=\pi_{2,*}\left(\left[\overline{M}_{0,n,\beta}(\mathcal X_{r})\right]^{\on{vir}}\cap \frac{e_{\mathbb C^\times}(\mathcal O(\mathcal D)_{0,n,\beta})}{e_{\mathbb C^\times}(\mathcal O(\mathcal D/r)_{0,n,\beta})}\right)_{t=0},
    \]
    where 
    \[
    \pi_1:\overline{M}_{0,n,\beta}(\mathcal X_{\mathcal D,r})\rightarrow \overline{M}_{0,n,\beta}(\mathcal X);
    \]
    \[
    \pi_2:\overline{M}_{0,n,\beta}(\mathcal X_{r}) \rightarrow \overline{M}_{0,n,\beta}(\mathcal X)
    \]
    are forgetful maps that forget the orbifold structures on the target; $t$ is the equivariant parameter for the natural $\mathbb C^\times$-action on $\mathcal P$.
\end{lemma}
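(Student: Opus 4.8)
This is the orbifold counterpart of \cite{FTY}*{Theorem~3.6}, and the plan is to rerun that proof with the smooth base $X$ replaced by the Deligne--Mumford stack $\mathcal X$, verifying that each step survives the target orbifold structure. The first thing to check is the geometric setup of \cite{FTY}*{Section~3.1}: because $\mathcal O_{\mathcal X}(\mathcal D)$ is convex, the section of $p^*\mathcal O_{\mathcal P}(1)$ cutting out $\mathcal X_{\mathcal D,r}$ inside the $\mathbb P^1[r]$-bundle $\mathcal P_{\mathcal X_\infty,r}$ is still a regular section transverse to the fibres of $\mathcal P_{\mathcal X_\infty,r}\to\mathcal X$, so $\overline M_{0,n,\beta}(\mathcal X_{\mathcal D,r})$ embeds in $\overline M_{0,n,\beta}(\mathcal P_{\mathcal X_\infty,r})$ as the substack of maps factoring through the hypersurface. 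Alongside this I would record the identifications of $\mathcal O(\mathcal D)$ and $\mathcal O(\mathcal D/r)$ (the pullback of $\mathcal O_{\mathcal X}(\mathcal D)$ and its canonical $r$-th root) with the restrictions of $\mathcal O_{\mathcal P}(1)$ to the hypersurface and to the two $\mathbb C^\times$-fixed sections of the bundle, since these are what make the line bundles in the statement appear.

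Next I would combine two tools. First, the functoriality of the virtual class under the regular embedding above --- the genus-zero orbifold quantum Lefschetz principle, for which convexity of $\mathcal O_{\mathcal X}(\mathcal D)$ is exactly the hypothesis needed --- expresses the pushforward of $[\overline M_{0,n,\beta}(\mathcal X_{\mathcal D,r})]^{\on{vir}}$ to $\overline M_{0,n,\beta}(\mathcal P_{\mathcal X_\infty,r})$ as a cap with the equivariant Euler class of the index of $p^*\mathcal O_{\mathcal P}(1)$. Second, $\mathbb C^\times$-localization for the fibrewise action on $\mathcal P=\mathbb P(\mathcal O_{\mathcal X}(-\mathcal D)\oplus\mathcal O)$ that scales the first summand, lifted to $\mathcal P_{\mathcal X_\infty,r}$: since $\beta$ has no fibre component, only the two fixed sections (which lie over $\mathcal X$) contribute, and reading off their virtual normal bundle weights reorganises the above into $[\overline M_{0,n,\beta}(\mathcal X_r)]^{\on{vir}}$ capped with $e_{\mathbb C^\times}\!\big((\mathcal O(\mathcal D))_{0,n,\beta}\big)\big/e_{\mathbb C^\times}\!\big((\mathcal O(\mathcal D/r))_{0,n,\beta}\big)$, with $t$ the $\mathbb C^\times$-weight along the fibre. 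Passing to the non-equivariant limit $t=0$ and pushing forward to $\overline M_{0,n,\beta}(\mathcal X)$ gives the identity, and I would finish with the dimension count showing both sides lie in the same Chow group.

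The step I expect to be the real obstacle is the existence of the $t=0$ limit. Unlike $\mathcal O_{\mathcal X}(\mathcal D)$, the bundle $p^*\mathcal O_{\mathcal P}(1)$ is \emph{not} convex along orbifold curves mapping to the $\mathbb P^1[r]$-bundle, so both $(\mathcal O(\mathcal D))_{0,n,\beta}$ and $(\mathcal O(\mathcal D/r))_{0,n,\beta}$ are genuine two-term complexes rather than vector bundles; this is precisely why the answer has to be written as a ratio of equivariant Euler classes, and one must show that the $t$-dependence in numerator and denominator cancels so that the limit exists and equals the left-hand side. Following \cite{FTY}, this reduces to an Euler-sequence computation along the orbifold fibres together with the vanishing of $H^1$ of the relevant twists on genus-zero orbifold curves; the only genuinely new input is the orbifold bookkeeping --- the ages of the markings along $\mathcal X_\infty$ must be matched with the root data imposed along $\mathcal D$, and no new stacky point is created in the fibre direction because the fibre degree vanishes --- and this does not interfere with the cancellation. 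Modulo these points, the argument is formally the same as the scheme case in \cite{FTY}.
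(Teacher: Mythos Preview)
Your strategy aligns with the paper's: both extend \cite{FTY}*{Theorem~3.6} by first invoking the orbifold quantum Lefschetz principle \cite{Tseng} to write
\[
i_*[\overline M_{0,n,\beta}(\mathcal X_{\mathcal D,r})]^{\on{vir}}=[\overline M_{0,n,\beta}(\mathcal P_{\mathcal X_\infty,r})]^{\on{vir}}\cap e\big((\pi^*\mathcal O_{\mathcal P}(1))_{0,n,\beta}\big),
\]
and then localizing with respect to the fibrewise $\mathbb C^\times$-action on $\mathcal P_{\mathcal X_\infty,r}$.

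The gap is in your localization step. You write that ``since $\beta$ has no fibre component, only the two fixed sections contribute'' and that reading off normal-bundle weights ``reorganises'' this into an expression supported on $\mathcal X_r$ alone. But you have not explained why the zero-section locus $\mathcal X_0\cong\mathcal X$ drops out: a single-vertex graph over $\mathcal X_0$ is a legitimate fixed component whenever the markings carry trivial age, and the fibre-degree constraint does nothing to exclude it. The paper's argument, following \cite{FTY}, indexes the fixed loci by decorated bipartite graphs and uses the \emph{twisting class} $e((\pi^*\mathcal O_{\mathcal P}(1))_{0,n,\beta})$ itself as the selection mechanism: any graph containing an edge picks up a vanishing factor from the twist, and the same happens to the $\mathcal X_0$ vertex because $\mathcal O_{\mathcal P}(1)$ restricts there to the trivial bundle with trivial weight. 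The unique survivor is the single vertex $\Gamma_0$ over $\pi^{-1}(\mathcal X_\infty)$, whose localization contribution is precisely the right-hand side. So it is the Euler-class twist, not the fibre-degree bookkeeping, that singles out the correct fixed locus. Your third paragraph about the $t=0$ limit is a reasonable side concern, but the \cite{FTY} argument already absorbs it and it is not where the work lies; the point that needs attention is the identification of the surviving fixed locus and the reason the other one vanishes.
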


\begin{proof}
    The proof is a generalization of the proof of \cite{FTY}*{Theorem 3.6} to the orbifold setting. 

Let $i:\overline{M}_{0,n,\beta}(\mathcal X_{\mathcal D,r})\rightarrow \overline{M}_{0,n,\beta}(\mathcal P_{\mathcal X_\infty,r})$ be the map induced by the inclusion $i:\mathcal X_{\mathcal D,r}\rightarrow \mathcal P_{\mathcal X_\infty,r}$.    By the orbifold quantum Lefschetz principle \cite{Tseng}*{Proposition 5.2.3}, the Gromov--Witten class $i_{*}\left[\overline{M}_{0,n,\beta}(\mathcal X_{\mathcal D,r})\right]^{\on{vir}}$ equals the Gromov--Witten class of $(\mathcal P_{\mathcal X_{\infty},r})$ twisted by $(e,p^*\mathcal O_{\mathcal P}(1))$:
    \[
i_{*}\left[\overline{M}_{0,n,\beta}(\mathcal X_{\mathcal D,r})\right]^{\on{vir}}=\left[\overline{M}_{0,n,\beta}(\mathcal P_{\mathcal X_{\infty},r})\right]^{\on{vir}}\cap e((p^*\mathcal O_{\mathcal P}(1))_{0,n,\beta}).
    \]

%\textcolor{red}{Which curve class $\beta$ it is?}
    
    There is a natural $\mathbb C^\times$-action on the fiber of $\mathcal P$ with weight $-1$ on $\mathcal O_{\mathcal X}(-\mathcal D)$ and acts trivially on $\mathcal O$. It induces a $\mathbb C^\times$-action on $\mathcal P_{\mathcal X_\infty,r}$ and a $\mathbb C^\times$-action on the moduli space $\overline{M}_{0,n,\beta}(\mathcal P_{\mathcal X_{\infty},r})$. 
    The fixed loci of the $\mathbb C^\times$-action can be labeled by decorated bipartite graphs. 

As explained in \cite{FTY}*{Theorem 3.6}, the twisting $e((p^*\mathcal O_{\mathcal P}(1))_{0,n,\beta})$ implies that the localization graphs containing an edge always contribute $0$. The only contribution is the graph $\Gamma_0$ which consists of a single vertex over $p^{-1}(\mathcal X_\infty)$ and no edge. The localization contribution of the $\Gamma_0$ is precisely
\[
\pi_{2,*}\left(\left[\overline{M}_{0,n,\beta}(\mathcal X_{r})\right]^{\on{vir}}\cap \frac{e_{\mathbb C^\times}(\mathcal O(\mathcal D)_{0,n,\beta})}{e_{\mathbb C^*}(\mathcal O(\mathcal D/r)_{0,n,\beta})}\right)_{t=0}.
\]
    
\end{proof}

%\textcolor{red}{where does the relation take value? Check these pushforward??}

The following lemma will also be used.
\begin{lemma}\label{lemma-rubber}
    The following cycle
    \begin{align}\label{cycle-rigidification}
        \epsilon_* [\overline M_{0,n,\beta+kf}(Y,D_0+D_\infty)]^{\on{vir}}\in A_*(\overline M_{0,n,\beta+kf}(Y))
    \end{align}    
    vanishes unless $\beta=0\in H_2(D)$ and $n=2$, where $\overline M_{0,n,\beta+kf}(Y,D_0+D_\infty)$ is the moduli space of relative stable maps to $(Y,D_0+D_\infty)$, $f$ is the class of a fiber, and
    \[
    \epsilon: \overline M_{0,n,\beta+kf}(Y,D_0+D_\infty)\rightarrow \overline M_{0,n,\beta+kf}(Y)
    \]
    is the forgetful map.
\end{lemma}

%\textcolor{red}{Similar to \cite{TY18} page 16 footnote. First pushforward to moduli of stable maps to $D$, then follow by an inclusion map.}

\begin{proof}
This is essentially the rigidification lemma of \cite{MP}*{Lemma 2}. We include the proof here for the completeness. This can be proved via a virtual localization computation. Let $t^\prime$ be the equivariant parameter for the $\mathbb C^\times$-action induced by the natural $\mathbb C^\times$-action that scales the fiber $Y\rightarrow D$. If $\beta\neq 0\in H_2(D)$ or $n>2$, there is at least a stable vertex in the virtual localization formula. In the localization formula, the vertex contribution over $D_\infty$ (and the vertex contribution over $D_0$ is similar) is:
\begin{align}\label{contri-infinity}
\prod_{e\in E(v)}C_{(e,v)}\frac{\prod_{e\in E(\Gamma)}d_e}{t^\prime+\psi_\infty},
\end{align}
where $C_{(e,v)}$ is a constant that does not depend on $t^\prime$ and $\psi_\infty$ is the first Chern class of the cotangent line bundle at the relative divisor $D_\infty$.
This always contributes to negative powers of $t^\prime$. Hence the cycle (\ref{cycle-rigidification}) vanishes.
\end{proof}

\subsection{The degeneration graphs}

We first present a sketch of our analysis of the degeneration graph.

\begin{itemize}

\item Edges: we prove Lemma \ref{lemma-edge}: all edges have ages $0$ with the orbifold divisors $X\times \infty$, $X\times \on{pt}$, $Y_\infty$ and $Y_\sigma$. As a consequence of Lemma \ref{lemma-edge}, we conclude Corollary \ref{cor-no-fiber}: the vertices over $X\times \mathbb P^1$ carry no fiber class of the trivial fibration $X\times \mathbb P^1\rightarrow X$. For each vertex over $X\times \mathbb P^1$, there is exactly one edge whose cohomology weight along the $\mathbb P^1$-direction is $[\on{pt}_{\mathbb P^1}]$, and other edges do not carry point constraints along the $\mathbb P^1$-direction. 

\item For vertices with $p_i$ attached, we prove Lemma \ref{lemma-p-i-p-0}: the markings $p_i$ for $i\in \{1,\ldots,n\}$ and the marking $p_0$ are not attached to the same vertex. 

Then we consider the vertex $v$ over $P_Y$ containing one of $p_i$, for $i=1,\ldots,n$. We prove Lemma \ref{lemma-P-Y-vanish}: there can be exactly one edge and one $p_i$ associated with $v$. The contact order of the edge with $D\times \mathbb P^1$ is $k_i$ and the curve classes are fiber classes of $P_Y\rightarrow D$.

\item Then we analyse other vertices in Section \ref{sec:other-vertices}. The conclusion is that there is only one more vertex $v_0$ on the $P_Y$-side with one edge and the marking $p_0$ is attached to $v_0$.

\item We conclude the analysis of the degeneration graph with Proposition \ref{prop-deg-graph}: The term of the degeneration graph $\Gamma$ in the degeneration formula vanishes unless    there is exactly one vertex over $(X\times \mathbb P^1)$ with $(n+1)$-edges connecting this vertex to the $(n+1)$-vertices over $P_Y$. We denote these vertices over $P_Y$ by $v_i$ for $i=0,\ldots,n$.

\item Then in Section \ref{sec:conclusion}, we conclude that all the virtual cycles over the $P_Y$-side are trivial and all additional interior markings are mapped to $(X\times \mathbb P^1)$. 
Therefore, the virtual cycle for the vertex $v_X$ over the $X\times \mathbb P^1$-side  is the virtual cycle on the LHS of (\ref{iden-main}) and this concludes the proof of Theorem \ref{thm-main-1}.

\end{itemize}

\begin{center}

\begin{tikzpicture}

% Center vertex
\fill (0,0) circle (2pt);
\node[left] at (0,0) {$v_X$};

% Explicit top vertex v_0
\fill (3,1.5) circle (2pt);
\node[right] at (3,1.5) {$v_0$};
\draw (0,0) -- (3,1.5);

% Explicit second vertex v_1
\fill (3,0.8) circle (2pt);
\node[right] at (3,0.8) {$v_1$};
\draw (0,0) -- (3,0.8);

% Vertical dots for vertices
\node at (3,0) {$\vdots$};

% Dotted edges
\draw[dotted] (0,0) -- (3,0.3);
\draw[dotted] (0,0) -- (3,-0.3);

% Explicit second-to-last vertex v_{n-1}
\fill (3,-0.8) circle (2pt);
\node[right] at (3,-0.8) {$v_{n-1}$};
\draw (0,0) -- (3,-0.8);

% Explicit last vertex v_n
\fill (3,-1.5) circle (2pt);
\node[right] at (3,-1.5) {$v_n$};
\draw (0,0) -- (3,-1.5);

\end{tikzpicture}
\end{center}

\subsubsection{Edges}
We first study the contact orders/ages of the edges.

\begin{lemma}\label{lemma-edge}
The term of the degeneration graph $\Gamma$ in the degeneration formula vanishes unless  all the edges in  $\Gamma$ carry contact orders of the form $(\eta_0^i,0,0)$. That is, all the edges have ages (contact orders) $0$ with the orbifold divisors $X\times \infty, X\times \on{pt}, Y_\infty$ and  $Y_\sigma$.
\end{lemma}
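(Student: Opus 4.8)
The plan is to analyze each edge of a degeneration graph $\Gamma = (\Gamma_1,\Gamma_2,I)$ separately, using the constraint that an edge records the contact order of a node where a component mapping to $(X\times\mathbb P^1)$ meets a component mapping to $P_Y$ along the relative divisor $D\times\mathbb P^1$. By Lemma \ref{lemma-mid-age} no mid-ages occur along $X\times\infty$, $X\times\on{pt}$, $Y_\infty$ or $Y_\sigma$, so the age of an edge along each of these orbifold divisors is $0$ (a ``small'' age) on one of the two sides of the edge; the matching condition $\vec\eta^{i,\vee}=(\eta^i_0,-\eta^i_1,-\eta^i_2)$ then forces the contact order on the other side to be $\le 0$. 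So it suffices to rule out strictly negative contact orders along these divisors at the nodes lying over the relative divisor.

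First I would observe that the node of an edge lies in the relative divisor $D\times\mathbb P^1$, which on the $(X\times\mathbb P^1)$-side sits away from nothing special but on the $P_Y$-side sits inside the fiber $\mathbb P^1$-direction of $P_Y\to Y$: concretely, $D\times\mathbb P^1$ is the ``base'' copy of $D\times\mathbb P^1\subset P_Y$ and it is disjoint from $Y_\infty$ and from $Y_\sigma$ except along $D = Y_\infty\cap Y_\sigma$. In fact I would identify $D\times\mathbb P^1$ as $\mathbb P(N_{D/X}\oplus\mathcal O_D)$ sitting inside $P_Y=\mathbb P(\mathcal O_Y(-D_\infty)\oplus\mathcal O)$ as the $0$-section of $\mathcal O_Y(-D_\infty)$ restricted over $Y$; the point is that $D\times\mathbb P^1$ meets $Y_\infty$ transversally along $D\times\infty$ and meets $Y_\sigma$ transversally along a section, with both intersections being honest divisors in $D\times\mathbb P^1$, not contained in it. Hence a node on $D\times\mathbb P^1$ that is \emph{not} on $X\times\infty$, $X\times\on{pt}$, $Y_\infty$ or $Y_\sigma$ automatically has contact order $0$ with all four; and the twisted sectors of the root stack over $D\times\mathbb P^1$ relevant here are exactly those supported at the loci $D\times 0$, $D\times\infty$ (for the $\mathbb P^1[r_1,r_2]$-factor) with trivial extra age. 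The remaining task is to show a node cannot sit on $Y_\infty$ or $Y_\sigma$ with negative contact order while still lying on the relative divisor $D\times\mathbb P^1$.

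The crucial step, and the one I expect to be the main obstacle, is excluding negative contact orders on the $P_Y$-side: an edge could \emph{a priori} have contact order $(\eta_0^i, -a, -b)$ with $a$ or $b$ positive, i.e.\ the corresponding twisted sector is a ``large age'' sector of the root stack along $Y_\infty$ or $Y_\sigma$. I would rule this out by a dimension/vanishing argument on the $P_Y$-vertex analogous to Lemma \ref{lemma-X-P-vanish}: negative contact orders along $Y_\infty$ (resp.\ $Y_\sigma$) force cohomology weights of shifted degree $2$ at the node (by the degree-shift formulas $\deg([1]_{(i,0)}) = 2$ for $i<0$ displayed above), and combined with the relative constraint along $D\times\mathbb P^1$ and the orbifold quantum Lefschetz / localization description of the $P_Y$-cycle from Lemma \ref{lemma-FTY-orb}, the corresponding contribution pushes forward to zero on $\overline M_{0,n_v}$ because there is no longer room for a $[\on{pt}]$ insertion in the fiber $(\mathbb P^1,0+\infty)$ direction. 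Concretely I would: (i) use Lemma \ref{lemma-FTY-orb} to write the $P_Y$-side cycle as a $(\mathbb P^1,0+\infty)$-relative cycle over $Y$ (via $P_Y = \mathbb P(\mathcal O_Y(-D_\infty)\oplus\mathcal O)$) twisted appropriately; (ii) apply the product/vanishing principle for $(\mathbb P^1,0+\infty)$ relative invariants with non-negative contacts requiring a $[\on{pt}]$-insertion; (iii) check that a negative contact order along $Y_\infty$ or $Y_\sigma$ at a node is incompatible with this, since it would demote the fiber insertion at that node to an identity class of shifted degree, and the relative marking constraints along $D\times\mathbb P^1$ already use up the available $[\on{pt}]$. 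The symmetry between $(X\times\infty, Y_\infty)$ and $(X\times\on{pt}, Y_\sigma)$, already used in Lemma \ref{lemma-mid-age}, lets me run the $Y_\sigma$ case identically. This yields $\eta^i_1 = \eta^i_2 = 0$ for every edge, as claimed.
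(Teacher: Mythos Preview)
Your approach diverges substantially from the paper's and, as written, has genuine gaps.

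The paper's proof is purely combinatorial and uses no vanishing or dimension count. The ingredients are: (a) since $X\times\infty$ and $X\times\on{pt}$ (resp.\ $Y_\infty$ and $Y_\sigma$) are linearly equivalent, at every vertex the total contact with one must equal the total contact with the other; (b) an edge can carry non-zero contact with at most one divisor in each pair, because the relevant state space $\mathfrak H_{i,j}$ vanishes for $ij\neq 0$; (c) the legs $p_0,\ldots,p_n$ all carry balanced contact $(0,0)$ or $(k_i,k_i)$. Hence any edge with non-zero contact to one of the two divisors unbalances its vertex, forcing another such edge at that vertex; following these edges produces a chain, and since the genus-zero graph is a tree the chain terminates at a vertex with only one such edge, contradicting (a). That is the entire argument, and the Remark immediately following the lemma stresses that it hinges on the specific choice of divisors and marking data here, not on any general vanishing principle.

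Your route through pushforward vanishing has two concrete problems. First, the ``product/vanishing principle for $(\mathbb P^1,0+\infty)$'' you invoke in step~(ii) is essentially Lemma~\ref{lemma-X-P-vanish}, which genuinely relies on the product decomposition of $X\times\mathbb P^1$; on the $P_Y$-side there is no such product, and Lemma~\ref{lemma-FTY-orb} does not manufacture one---it trades the root along $Y_\sigma$ for an Euler-class twist, not a factorization into base and fiber relative cycles. So step~(i) does not produce the input that step~(ii) needs. Second, even if a vanishing on the $P_Y$-side ruled out $\eta^i_j<0$ there, you would still have to exclude $\eta^i_j>0$ on the $P_Y$-side (equivalently $\eta^i_j<0$ on the $X\times\mathbb P^1$-side); the symmetry you appeal to at the end is between the two divisors within a pair, not between the two pieces of the degeneration. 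The linear-equivalence-plus-tree argument handles both signs simultaneously and avoids all of this machinery.
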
 
\begin{proof}
Observe that for each vertex over $P_Y$, the total contact orders with $Y_\infty$ and $Y_\sigma$ should be the same; for each vertex over $X\times \mathbb P^1$, the total contact orders with $X\times \infty$ and $X\times \on{pt}$ should be the same. An edge attached to a vertex over $P_Y$ cannot have non-zero contact orders to both $Y_\infty$ and $Y_\sigma$. An edge attached to a vertex over $X\times \mathbb P^1$ cannot have non-zero contact orders to both $X\times \infty$ and $X\times \on{pt}$. We recall that the contact orders at $p_i$ are $(k_i,k_i)$ for $i\in\{1,\ldots,n\}$.

Suppose one of the edges has non-zero contact with one of the divisors, for example $X\times \infty$, there needs to be at least one more edge associated with the same vertex that carries a non-zero contact with $X\times \infty$ or $X\times \on{pt}$, otherwise the total contact order with  $X\times \infty$ and $X\times \on{pt}$ will not be the same for this vertex.

This will form a chain of vertices and edges that have non-zero contact with $X\times \infty$ or $X\times \on{pt}$. At the end of the chain, there will be a vertex with just one edge that has non-zero contact with $X\times \infty$ or $X\times \on{pt}$. Contradiction!     
\end{proof}

\begin{remark}
    The proof of Lemma \ref{lemma-edge} looks very similar to the proof of Lemma \ref{lemma-mid-age}. The difference is that Lemma \ref{lemma-edge} depends on our specific choice of divisors and the contact orders at the markings. While Lemma \ref{lemma-mid-age} works in general.
\end{remark}

The following result is a direct consequence of Lemma \ref{lemma-edge}.
\begin{corollary}\label{cor-no-fiber} 
The vertices over $X\times \mathbb P^1$ carry no fiber class of the trivial fibration $X\times \mathbb P^1\rightarrow X$. For each vertex over $X\times \mathbb P^1$ there is exactly one edge whose cohomology weight along the $\mathbb P^1$-direction is $[\on{pt}_{\mathbb P^1}]$, and other edges do not carry point constraints along the $\mathbb P^1$-direction. 
\end{corollary}

\subsubsection{Vertices with $p_i$ attached.}\label{sec:vertices-p-i}
Let $v_0$ be the vertex that contains the marking $p_0$. At this moment, we do not know if $v_0$ is a vertex over $(X\times \mathbb P^1)$ or over $P_Y$.

\begin{lemma}\label{lemma-p-i-p-0}
The term of the degeneration graph $\Gamma$ in the degeneration formula vanishes unless     the markings $p_i$ for $i\in \{1,\ldots,n\}$ are not attached to the vertex $v_0$.
\end{lemma}
\begin{proof}

As $p_i$'s are distributed to the $P_Y$-side, if $v_0$ is a vertex over $(X\times \mathbb P^1)$, then $p_i$'s are not attached to $v_0$. Now we consider the case when $p_0$ is distributed to the $P_Y$-side.

    Suppose that a marking $p_i$, for some $i\in \{1,\ldots,n\}$, is attached to $v_0$. Then by Lemma \ref{lemma-FTY-orb}, the virtual cycle $\overline{M}_{\Gamma_2,v}(\mathcal X_2,\mathcal D_2)$ of $(P_Y, (D\times \mathbb P^1)\cup Y_\infty\cup Y_\sigma)$ becomes a twisted virtual cycle of $(P_Y, (D\times \mathbb P^1)\cup Y_\infty)_{r}$ by $\frac{e(\mathcal O(Y_\infty)_{0,n_v,\beta_v})}{e((Y_\infty/r)_{0,n_v,\beta_v})}$, where $(P_Y, (D\times \mathbb P^1)\cup Y_\infty)_{r}$ is a $\mu_r$-gerbe over the multi-root stack associated with the pair $(P_Y, (D\times \mathbb P^1)\cup Y_\infty)$. 

The orbifold quantum Lefschetz principle of \cite{Tseng}*{Proposition 5.2.3} can be applied to the twist $e(\mathcal O(Y_\infty)_{0,n_v,\beta_v})$ because $\mathcal O(Y_\infty)$ is convex. Therefore, the virtual cycle $\overline{M}_{\Gamma_2,v}(\mathcal X_2,\mathcal D_2)$ becomes the virtual cycle of $(Y_\sigma=Y,D_0\cup D_\infty)_r$ twisted by $\frac{1}{e(D_\infty/r)_{0,n_v,\beta_v}}$, where $p_i$ becomes a relative marking along $D_\infty$ with contact order $k_i$. We remark that $D_0=Y_\sigma\cap (D\times \mathbb P^1)$ and $D_\infty=Y_\sigma\cap Y_\infty$.

On the other hand, we require $p_0$ to map to $Y_0$ which does not intersect $Y_\sigma$. Observe that $i^*X_\infty-D=0$, where $i:Y_\sigma\hookrightarrow P$ is the inclusion. This implies that $v_0$ cannot contain $p_0$. It contradicts the definition of $v_0$. 

One can also prove it via a virtual localization computation, similar to the argument in the proof of Lemma \ref{lemma-P-Y-vanish} below. If we consider the virtual localization formula for the $\mathbb C^\times$-action induced by the natural $\mathbb C^\times$-action of $Y\rightarrow D$, then there will only be negative powers of $t^\prime$, where $t^\prime$ is the equivariant parameter, unless there are two markings and the curve classes are fiber classes. 
\end{proof}

%\textcolor{red}{ The above argument is not precise, because if we apply the same argument before degeneration then it implies the original invariant is zero. What we need is the following. 
%The vertex over $(X\times \mathbb P^1)$ has an edge with $[\on{pt}_{\mathbb P^1}]$. This edge has to connect to $v_0$, otherwise there will only be negative power of $t$ in the localization computation for the vertex. If there are $[\on{pt}_{\mathbb P^1}]$ to cancel with $\frac{1}{e(...)}$, then $X_\infty-D$ restricts to zero. Otherwise, there is a factor $t$ coming from $[X_\infty-D]$ which will cancel with $\frac{1}{e(...)}$. Then it will look like the theory of $(Y,D_0+D_\infty)$ with no classes $[D_0]$ or $[D_\infty]$. There can only be two markings, so $p_i$, $i\neq 0$, can not be in $v_0$. \\There are two different $t$'s as there are two different $\mathbb C^\times$-action directions. Let $t_1$ be the vertical $\mathbb C^\times$ parameter and $t_2$ be the horizontal $\mathbb C^\times$ parameter. Then $[\on{pt}_{\mathbb P^1}]$ does not affect $t_2$. We will have $[t_1-t_2]$. In a word, the marking $p_0$ contributes either $t_1$ or $t_2$. If contributes $t_1$, then there are only negative powers of $t_2$. If not, then the restriction to $Y$ is zero.}

\begin{lemma}\label{lemma-P-Y-vanish}
The term of the degeneration graph $\Gamma$ in the degeneration formula vanishes unless the following holds. For each vertex $v$ over $P_Y$ that contains one of $p_i$, for $i=1,\ldots,n$, there can be exactly one edge and one $p_i$ associated with $v$. The contact order of the edge with $D\times \mathbb P^1$ is $k_i$, and the curve classes are fiber classes of $P_Y\rightarrow D$.
\end{lemma}

\begin{proof}

Recall that the pushforward of the virtual cycle $\overline{M}_{\Gamma_2,v}(\mathcal X_2,\mathcal D_2)$ to the moduli space of stable maps to $Y$ is the pushforward of the virtual cycle for the Gromov--Witten theory of $(Y_\sigma=Y,D_0\cup D_\infty)_r$ twisted by $\frac{1}{e(D_\infty/r)_{0,n_v,\beta_v}}$.

By Lemma \ref{lemma-rubber},   the pushforward of the virtual cycle of $(Y, D_0+D_\infty)$ without a non-negative contact vanishes unless there are two markings and the curve class is a fiber class of the fibration $Y\rightarrow D$.

   Observe that there are two different $\mathbb C^\times$-actions for $P_Y$. One comes from the bundle $P_Y\rightarrow Y$ and the other comes from the bundle $Y\rightarrow D$.
   
   Here we have an extra twist $\frac{1}{e_{\mathbb C^\times}(D_\infty/r)_{0,n_v,\beta_v}}$. The pushforward of the localization contribution of $\frac{1}{e_{\mathbb C^\times}(D_\infty/r)_{0,n_v,\beta_v}}$ was calculated in \cite{TY20c}*{Section 2.2.2}. There will only be non-positive powers of $t^\prime$ in the localization formula when there are no negative contacts (large ages). More precisely, the contribution of a stable vertex $v$ in the degeneration formula is given in \cite{TY20c}*{Equation (2.2)} which in genus zero specializes to:
   \begin{align}\label{eqn:r_dep}
&\left(\prod_{e\in E(v)}C_{(e,v)}\frac{rd_e}{t^\prime+\on{ev}_{e}^*c_1(N_{D/X})-d_e\bar{\psi}_{(e,v)}}\right)\cdot\left(\sum_{i=0}^{\infty}(t^\prime/r)^{-1+|E(v)|-i}c_i(-R^\bullet\pi_*\mathcal L)\right)\\
\notag =& (t^\prime)^{-1}\left(\prod_{e\in E(v)}C_{(e,v)}\frac{d_e}{1+(\on{ev}_{e}^*c_1(N_{D/X})-d_e\bar{\psi}_{(e,v)})/t^\prime}\right)\cdot\left(\sum_{i=0}^{\infty}(t^\prime)^{-i}(r)^{i+1}c_i(-R^\bullet\pi_*\mathcal L)\right),
\end{align}
where 
\begin{itemize}
    \item $C_{(e,v)}$ is a constant that does not involve $t^\prime$.
    \item $E(v)$ is the set of edges associated with the vertex $v$.
    \item Set $L=N_{D/X}$. Then
    \[
\pi: \mathcal C_{0,n(v)}(\sqrt[r]{L/D}, \beta(v))\rightarrow \overline{\mathcal{M}}_{0,n(v)}(\sqrt[r]{L/D}, \beta(v))
\]
 is the universal curve, 
\[
\mathcal L\rightarrow \mathcal C_{0,n(v)}(\sqrt[r]{L/D}, \beta(v))
\] 
is the universal $r$-th root. Here $n(v)$ is the number of marked points of the vertex $v$ and $\beta(v)$ is the degree assigned to the vertex $v$.
    
    \item $d_e$ is the degree of the edge $e\in E(v)$.
    \item $\text{ev}_e$ is the evaluation map at the node corresponding to $e$.
    \item $\bar{\psi}_{(e,v)}$ is the descendant class at the marked point corresponding to the pair $(e,v)$.
\end{itemize}

As the curve class is a fiber class of the fibration $Y\rightarrow D$ and the marking $p_i$ carries the contact order $k_i$ with $D_\infty$, the edge should carry contact order $k_i$ with $D_0$. Therefore, the contact order of the edge with the divisor $D\times \mathbb P^1$ is $k_i$. We can see from (\ref{eqn:r_dep}) that there is only negative powers of $t^\prime$. 
\end{proof}

%\textcolor{red}{More details here....If there are classes $X_\infty$, then $\iota^*X_{\infty}=D$ and $\iota^*(X_{\infty}-D)=0$. Not contributing any $t$. The twisting cancels with $\on{pt}_{\mathbb P^1}=D$???}

In summary, there is one vertex $v_i$ for each marking $p_i$, $i\in \{1,\ldots,n\}$ and the curve classes are fiber classes of $P_Y\rightarrow D$. Each vertex $v_i$ carries the contact order $(k_i,k_i)$ with $Y_\infty+Y_\sigma$ at $p_i$. The unique edge associated with the vertex $v_i$ carries the contact order $k_i$ with $D\times \mathbb P^1$ and contact order zero with $Y_\infty$ and $Y_\sigma$ (This follows from Lemma \ref{lemma-edge} but it is even more special here, since there is only one edge. The unique edge cannot have non-zero contact with both $Y_\infty$ and $Y_\sigma$ at the same time and the total contact order with $Y_\infty$ and $Y_\sigma$ should be the same for $v_i$, so the unique edge has contact order zero with both $Y_\infty$ and $Y_\sigma$). The remaining part of the degeneration graph carries a total contact order $0$ with $X_\infty$, as well as with $X_\sigma$.

\subsubsection{Other vertices}\label{sec:other-vertices}

Now we examine other vertices over $P_Y$. For a vertex $v^\prime$ over $P_Y$ that is not $\{v_i\}_{i=0}^n$, recall that, by Lemma \ref{lemma-edge}, the edges have contact orders zero with $Y_\infty$ and $Y_\sigma$ and positive contact orders with $D\times \mathbb P^1$. Therefore, there is no fiber class along the fibration $P_Y\rightarrow Y$, the curves must be in one of the sections. Suppose that the curves are in $Y_\infty$. Then the virtual cycle becomes the virtual cycle of the moduli space of relative stable maps to $(Y_\infty,D_0+D_\infty)$. However, since we have positive contact along $D_0$, we also need a relative marking with positive contact with $D_\infty$ because $D$ is nef, but there are no such relative markings here. Hence, the curves cannot be in $Y_\infty$.

%\textcolor{red}{None of these matters, the virtual cycles on the $P_Y$-side is trivial unless there are only fiber classes and two markings. We only have one marking here.}

We have the following observation. 
\begin{lemma}\label{lemma-v-prime-insertion}
    The insertions (cohomology weights) of the edges associated with $v^\prime$ along the $\mathbb P^1$-direction are of the form $[\on{1}_{\mathbb P^1}]$.
\end{lemma} 
\begin{proof}
   Otherwise, if there is an edge whose insertion along the $\mathbb P^1$-direction is of the form $[\on{pt}_{\mathbb P^1}]$, then we can take the point $\on{pt}$ to be the point at $\infty\in \mathbb P^1$. Then the curve will be in $Y_\infty$, which cannot happen.  
\end{proof}

%\textcolor{red}{The class $[p^*_D1\cdot p^*_{\mathbb P^1}[\on{pt}]_{\mathbb P^1}]$ is the class $X_\infty$. If restricted to $X_0$ (if the point is in $X_0$), then the class $X_\infty$ is zero, also vanishes. Or maybe we do not even need this. As discussed below, there will be only one edge for each vertex, so there is one vertex over $X\times \mathbb P^1$. Then everything follows.}

Therefore, the virtual cycle associated with $v^\prime$ is the virtual cycle for the local-relative Gromov--Witten theory of $(\mathcal O_{Y}(-D_\infty),D_0)$. As $D_\infty$ is nef, we can apply the local-relative correspondence \cite{vGGR}. This becomes the virtual cycle of the relative stable map to $(Y, D_0+D_\infty)$ with maximal contact along $D_\infty$. By Lemma \ref{lemma-rubber}, the virtual cycle vanishes after pushing forward, unless there are only two markings and the curve class is the fiber class. Therefore, for each of these vertices, there can be only one edge associated with this vertex.

%\textcolor{red}{Such graphs are not allowed by \cite{vGGR}.....}

We recall from Corollary \ref{cor-no-fiber} that, for vertices over $X\times \mathbb P^1$, as there is no fiber class along the $\mathbb P^1$-direction either, the virtual cycle for $(\mathbb P^1,0+\infty)$ becomes $\mathbb P^1\times \overline{M}_{0,n_v}$. Therefore, each vertex over $(X\times \mathbb P^1)$ has exactly one edge whose insertion (cohomology weight) along the $\mathbb P^1$-direction is of the form $[\on{pt}_{\mathbb P^1}]$, and other edges should carry insertions (cohomology weights) of the form  $[1_{\mathbb P^1}]$ along the $\mathbb P^1$-direction.

\begin{lemma}
The term of the degeneration graph $\Gamma$ in the degeneration formula vanishes unless the marking $p_0$ is distributed to a vertex on the $P_Y$-side.
\end{lemma}
\begin{proof}

Suppose $p_0$ is distributed to a vertex $v_0$ over $(X\times \mathbb P^1)$. As the evaluation with the class $[X_\infty-p^*D]=[X_0]$ means that $p_0$ maps to $X_0$, it gives a point constraint along the $\mathbb P^1$-direction under the degeneration.  Then the edges associated with $v_0$ should all have cohomology weights of the form $[\on{1}_{\mathbb P^1}]$ along the $\mathbb P^1$-direction. By the pairing, if a vertex $v$ over $P_Y$ is connected to $v_0$ via an edge $e$, then the cohomology weight of edge on the $v$-side is $[\on{pt}_{\mathbb P^1}]$ along the $\mathbb P^1$-direction.  Therefore, by Lemma \ref{lemma-v-prime-insertion}, these edges cannot connect to a vertex $v^\prime$ over $P_Y$ if $v^\prime$ is not one of $v_i$ that contains one of $p_i$, for $i=1,\ldots,n$. As other vertices over $P_Y$ each can only have one edge as well, to form a tree, there cannot be vertices over $P_Y$ other than $\{v_i\}_{i=1}^n$. By Lemma \ref{lemma-P-Y-vanish}, each $v_i$ has one edge with contact order $k_i$ to $D\times \mathbb P^1$ and the curve classes are fiber classes of $P_Y\rightarrow D$. This is a contradiction because the total contact order with $D\times \mathbb P^1$ is $D\cdot\beta>\sum_{i=1}^n k_{i}$, so there should be at least one more vertex over $P_Y$. 

\end{proof}

We conclude that $p_0$ has to be distributed to the $P_Y$-side. Similar to other vertices, the vertex $v_0$, containing $p_0$ and mapping to the $P_Y$-side also only has one edge with an insertion (cohomology weight)  of the form $[\on{1}_{\mathbb P^1}]$ along the $\mathbb P^1$-direction. 

%\textcolor{red}{as there is already a class $[X_\infty-D]$, we do not allow another point insertion along $\mathbb P^1$-direction}

\subsubsection{The degeneration graphs}

As there is only one edge for each vertex over $P_Y$, there can be only one vertex $v$ over $(X\times \mathbb P^1)$ and $v$ is connected to a vertex over $P_Y$ via an edge. This implies that the degeneration graph is of the star shape.

Now we know that, by Lemma \ref{lemma-v-prime-insertion}, each vertex over $P_Y$ that is not $\{v_i\}_{i=1}^n$ has one edge whose cohomology weight along the $\mathbb P^1$-direction is of the form $[\on{1}_{\mathbb P^1}]$. And, by Corollary \ref{cor-no-fiber}, each vertex over $(X\times \mathbb P^1)$ has exactly one edge whose cohomology weight along the $\mathbb P^1$-direction is of the form $[\on{pt}_{\mathbb P^1}]$. As there is only one vertex over $(X\times \mathbb P^1)$, there can be only one vertex over $P_Y$ other than $\{v_i\}_{i=1}^n$.

Therefore, there is exactly one vertex $v_0$ over $P_Y$ other than the vertices $\{v_i\}_{i=1}^n$ and there is one unique vertex $v$ over $X\times \mathbb P^1$. The marking $p_0$ is attached to $v_0$.  We conclude the following:

\begin{proposition}\label{prop-deg-graph}
The term of the degeneration graph $\Gamma$ in the degeneration formula vanishes unless    there is exactly one vertex over $(X\times \mathbb P^1)$ with $(n+1)$-edges connecting this vertex to the $(n+1)$-vertices over $P_Y$. We denote these vertices over $P_Y$ by $v_i$ for $i=0,\ldots,n$. Each $v_i$ contains one leg/marking $p_i$. 
\end{proposition}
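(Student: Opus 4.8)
The plan is to obtain the proposition as the combinatorial conclusion of the edge- and vertex-constraints already established --- Lemmas~\ref{lemma-edge} and~\ref{lemma-P-Y-vanish} and the analysis of the remaining vertices --- organised through the tree relation $|V|-|E|=1$ and the bipartite structure of the degeneration graph. First I would collect what Lemma~\ref{lemma-edge} gives: every edge has contact order $0$ with each of $X\times\infty$, $X\times\on{pt}$, $Y_\infty$ and $Y_\sigma$, so no vertex over $X\times\mathbb P^1$ carries a fibre class of $X\times\mathbb P^1\to\mathbb P^1$. By the product formula the $(\mathbb P^1,0+\infty)$-factor at such a vertex is $\mathbb P^1\times\overline{M}_{0,n_v}$, and pushing forward to $\overline{M}_{0,n_v}$ forces exactly one incident edge to carry the $\mathbb P^1$-weight $[\on{pt}_{\mathbb P^1}]$ (Lemma~\ref{lemma-X-P-vanish} excludes zero such edges, and the single $\mathbb P^1$-integration excludes two or more). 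Dually, a vertex over $P_Y$ which is not one of the $v_i$ has no fibre class of $P_Y\to Y$, hence its curve lies in the zero section, where its contribution reduces by the local--relative correspondence \cite{vGGR} to a relative invariant of $(Y,D_0+D_\infty)$ with maximal contact along $D_\infty$; in the absence of interior insertions dual to $D_0$ or $D_\infty$ this vanishes after pushforward unless there are exactly two markings with a fibre class of $Y\to D$, so such a vertex has exactly one incident edge, of $\mathbb P^1$-weight $[1_{\mathbb P^1}]$.

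Next I would feed in the vertex-level statements already proved. By Lemma~\ref{lemma-P-Y-vanish} (together with the lemma preceding it, which places each $p_i$ with $i\geq 1$ on a $P_Y$-vertex disjoint from $v_0$), each $p_i$ with $i\in\{1,\dots,n\}$ lies on its own vertex $v_i$ over $P_Y$, with curve a fibre of $Y\to D$ and a single incident edge of contact order $k_i$ with $D\times\mathbb P^1$ and $0$ with $Y_\infty$ and $Y_\sigma$. By the lemma that $p_0$ cannot be distributed to the $X\times\mathbb P^1$-side, $p_0$ lies on a vertex $v_0$ over $P_Y$; since $v_0$ is none of the $v_i$, it has by the previous paragraph a single incident edge of $\mathbb P^1$-weight $[1_{\mathbb P^1}]$, and its only two markings are $p_0$ and that edge.

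Now the count. Every vertex over $P_Y$ has a unique incident edge, hence is a leaf of the genus-zero degeneration tree; the graph is bipartite with edges only between the $X\times\mathbb P^1$-side and the $P_Y$-side and satisfies $|V|-|E|=1$, so it must consist of a single vertex $v$ over $X\times\mathbb P^1$ to which every $P_Y$-vertex is joined --- a star. At $v$ there is exactly one edge of $\mathbb P^1$-weight $[\on{pt}_{\mathbb P^1}]$; since the gluing pairing interchanges $[\on{pt}_{\mathbb P^1}]$ and $[1_{\mathbb P^1}]$, this edge is attached to a $P_Y$-vertex carrying a $[1_{\mathbb P^1}]$-weighted edge, that is, to a vertex not among $v_1,\dots,v_n$, while the $v_i$ (whose edges carry $\mathbb P^1$-weight $[\on{pt}_{\mathbb P^1}]$, as their curves meet the common divisor $D\times\mathbb P^1$ along $D_0$) are attached to $v$ by the remaining $n$ edges. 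Hence there is exactly one $P_Y$-vertex other than $v_1,\dots,v_n$, namely $v_0$; the $P_Y$-vertices are precisely $v_0,v_1,\dots,v_n$; and $v$ has exactly $n+1$ incident edges, one to each $v_i$, with $v_i$ carrying the single leg $p_i$. This is the assertion of the proposition.

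The step I expect to be the main obstacle is the uniqueness of the ``extra'' $P_Y$-vertex in the last paragraph: it rests on the two structural facts that each $X\times\mathbb P^1$-vertex admits exactly one $[\on{pt}_{\mathbb P^1}]$-weighted edge and that $p_0$ lands on the $P_Y$-side, and it is precisely here that the specific choice of divisors $X_\infty$, $X_\sigma$ and the insertion $[X_\infty-D]=[X_0]$ at $p_0$ is used (through Lemmas~\ref{lemma-edge} and~\ref{lemma-X-P-vanish} and the preceding lemma on the position of $p_0$). Granting those inputs, the remainder is bookkeeping with the tree relation $|V|-|E|=1$ and the bipartite structure, and I would expect no further difficulty.
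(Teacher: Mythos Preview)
Your proposal is correct and follows essentially the same route as the paper: you invoke Lemmas~\ref{lemma-edge}, \ref{lemma-X-P-vanish}, \ref{lemma-P-Y-vanish}, the local--relative analysis for the remaining $P_Y$-vertices, and the lemma placing $p_0$ on the $P_Y$-side, then finish with the same bipartite-tree/weight count the paper uses in Section~3.3.4. Your explicit appeal to $|V|-|E|=1$ and the leaf property of the $P_Y$-vertices is a slightly cleaner packaging of the paper's ``star shape'' step, but the substance is identical.
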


\subsection{Conclusion}\label{sec:conclusion}

For $v_i$, $i=1,\ldots,n$, the orbifold marking $p_i$ with contact order $(k_i,k_i)$ is attached to this vertex $v_i$. There is one more marking $p_{e_i}$ which is coming from the unique edge $e_i$ attached to $v_i$. The $p_{e_i}$ has positive contact with $D\times \mathbb P^1$ and has a contact order zero to both $Y_{\infty}$ and $Y_\sigma$. At this marking, the insertion  along the $\mathbb P^1$-direction should be of the form $[\on{pt}_{\mathbb P^1}]$. By Lemma \ref{lemma-rubber}, the virtual cycle vanishes unless the curve class is a fiber class and the pushforward of the virtual cycle is the same as the virtual cycle for the moduli space of relative stable maps to $(Y,D_0+D_\infty)$ with fiber curve class and two markings. Recall that there is an automorphism factor $\eta_0^i=k_i$ in the degeneration formula. On the other hand, along the $\mathbb P^1$-direction, we have a relative invariant of $(\mathbb P^1,0+\infty)$ with maximal contacts to both divisor at two different markings and it is straightforward to compute that
\[
\langle [1]_{k_i}| \, |[1]_{k_i} \rangle_{0,2,k_i}^{(\mathbb P^1,0+\infty)}=\frac{1}{k_i}.
\]
This cancels with the automorphism factor $\eta_0^i=k_i$.
Therefore, the virtual cycle associated with $v_i$ times $\eta_0^i$ contributes $1$ in the degeneration formula. 

 For $v_0$, we have the interior marking $p_0$.  We recall that there is no fiber classes along the fibration $P_Y\rightarrow Y$ by Lemma \ref{lemma-edge}. %We can also understand it from the nefness of $X_{\infty}$ and $Y_{\infty}$ as follows: The edges of the degeneration graph $\Gamma_1$ and $\Gamma_2$ carry inverse contact orders $\eta^i_j$ and $-\eta^i_j$ to $X_{\infty}$ and $Y_{\infty}$. As the total contact order to $X_{\infty}$, as well as to $Y_{\infty}$, cannot be negative. The edges of $\Gamma_1$ and $\Gamma_2$ have to  carry zero contact order to both divisors. Therefore, the nefness also implies that there are no fiber classes on either side. 
The marking $p_0$ is mapped to $Y_0$, so the curve is entirely in $Y_0$. 
As the marking maps to $Y_0$, the virtual cycle just becomes the virtual cycle for the local-relative Gromov--Witten theory of $(\mathcal O_{Y}(-D_\infty),D_0)$:
\[
[\overline{M}_{\Gamma_2,v_0}(\mathcal O_{Y}(-D_\infty),D_0)]^{\on{vir}}\cap \on{ev}_0^*(-[D_\infty]).
\]
%where the class $-[D_\infty]$ comes from the Gysin pullback $\iota^!$.
Recall that $D_\infty$ is nef. Then by the local-relative correspondence \cite{vGGR} (more precisely, we refer to a slightly different form stated in \cite{TY20b}*{Equation (2)}.), we have the virtual cycle for genus zero relative Gromov--Witten theory of $(Y,D_0+D_\infty)$:
\[
[\overline{M}_{\Gamma_2,v_0}(\mathcal O_{Y}(-D_\infty),D_0)]^{\on{vir}}\cap \on{ev}_0^*(-[D_\infty])=(-1)^{k_0}\pi_{\on{rel},*}[\overline{M}_{\Gamma_2,v_0}(Y,D_\infty+D_0)]^{\on{vir}}.
\]
By Lemma \ref{lemma-rubber}, the cycle vanishes after pushforward unless it is of degree zero in $D$ and has two markings.  The contact orders for both markings are $D\cdot \beta-\sum_{i=1}^n k_i=k_0$ to $D_0$ and $D_\infty$ respectively. The automorphism factor $\eta_0^0=k_0$ cancels again with the pushforward of the moduli space. Note that the sign $(-1)^{k_0}$ appears when we apply the local-relative correspondence. The total contribution from the vertex $v_0$ is $(-1)^{k_0}$.

For other interior markings, as we concluded above that each vertex $v_i$, for $i=0,\ldots,n$, can only have two markings. One of the markings is $v_i$ and the other marking is the relative marking corresponding to the edge. Therefore, these interior markings are distributed to the vertex over the $X\times \mathbb P^1$-side.

Combining the above, the virtual cycle for the vertex over the $X\times \mathbb P^1$-side is the virtual cycle on the LHS of (\ref{iden-main}). This concludes the proof.

\begin{remark}
    If we remove the vertices $v_1,\ldots,v_n$ and their associated edges, the degeneration graph has one vertex over $X\times \mathbb P^1$, one vertex over $P_Y$ and one edge connecting these two vertices. There are no fiber classes remaining. Indeed, this remaining part of the degeneration graph is the same as the degeneration graph in the proof of \cite{vGGR}*{Theorem 1.1}. Therefore, this gives the one extra relative marking for $(X,D)$.  
\end{remark}

%\begin{remark}   The proof holds if we add several interior markings to both sides of (\ref{iden-main}). Note that on the RHS of (\ref{iden-main}), we also require that the insertions of the additional interior markings are in $H^*(X)$ instead of $H^*(P)$. By the above degeneration analysis, these additional interior markings will not be distributed to the $P_Y$-side. Therefore, they are attached to the unique vertex over the $(X\times \mathbb P^1)$-side. Then the identity follows. \end{remark}

\section{Special cases}

\subsection{The local-relative correspondence}

Recall that we can rewrite the local-relative correspondence of \cite{vGGR} as follows:

\begin{align}\label{iden-local-rel}
\langle [\iota^*\gamma]_{D\cdot \beta}\rangle_{0,1,\beta}^{(X,D)}=(-1)^{D\cdot \beta+1}\langle [\gamma\cup D]\rangle_{0,1,\beta}^{\mathcal O_X(-D)},
\end{align}
where $D\cdot\beta>0$.

Recall that we consider a section $\sigma$ of $\mathcal O_X(D)$ such that the zero locus of $\sigma$ is the divisor $D$. The section $\sigma$ induces a section $\tilde f$ of $\mathcal O_{P}(1)$ such that the zero locus $\tilde f^{-1}(0)$ of $\tilde f$ is $X_\sigma:=\sigma(X)\cong X$ and the intersection of $\tilde f^{-1}(0)$ and $X_{\infty}$ is $D$. See, for example, \cite{FTY}*{Section 3.1}.

On the other hand, for the special case of Theorem \ref{thm-main-1} when $n=n_0=0$, we have

\begin{align*}
\langle [\iota^*\gamma]_{D\cdot \beta}\rangle_{0,1,\beta}^{(X,D)}=(-1)^{D\cdot \beta}\langle [\gamma\cup (X_\infty-\pi^*D)]_0\rangle_{0,1,\beta+0f}^{(P,X_\infty+X_\sigma)}.
\end{align*}
 Since the curve class is $\beta+0f$, the curves have no fiber components. As the marking maps to $X_0$, we actually have the local invariant:
\begin{align*}
   (-1)^{D\cdot \beta}\langle [\gamma\cup (X_\infty-\pi^*D)]_0\rangle_{0,1,\beta+0f}^{(P,X_\infty+X_\sigma)}&=(-1)^{D\cdot \beta}\langle [\gamma\cup (-D)]_0\rangle_{0,1,\beta}^{\mathcal O_X(-D)}\\
   &=(-1)^{D\cdot \beta+1}\langle [\gamma\cup D]_0\rangle_{0,1,\beta}^{\mathcal O_X(-D)}. 
\end{align*}
This is precisely the local-relative correspondence (\ref{iden-local-rel}).

%\textcolor{red}{Maybe more explanation how the marking becomes a relative marking......We can see it by directly computing the age at the marking.... Alternative, applying \cite{FTY}*{Theorem 3.6}, then..... }

\subsection{Relative invariants with two contacts}

In this section, we specialize Theorem \ref{thm-main-1} to relative invariants with two contacts. Theorem \ref{thm-main-1} becomes

\begin{equation}
          \begin{split}
          &\pi_{\on{rel},*}\left([\overline{M}_{0,(k_0,k_1),\beta}(X,D)]^{\on{vir}}\right)      \\
 =      &   (-1)^{k_0} u_{*}\left( [\overline{M}_{0,((0,0),(k_1,k_1)),\beta+k_1f}(P,X_{\infty}+ X_{\sigma})]^{\on{vir}}\cap\on{ev}_0^*(X_\infty-\pi^*D)\right).
     \end{split}
        \end{equation}   
The last line features orbifold invariants of  $(P, X_{\infty}+X_\sigma)$ with maximal contact, so we can apply the local-orbifold correspondence of \cite{BNTY}.

\begin{corollary}\label{cor-2-pt}
  Given a smooth projective variety $X$ with a smooth nef divisor $D$, genus zero relative invariants of $(X,D)$ satisfy the following identity:
        \begin{equation}
          \begin{split}
          & \pi_{\on{rel},*}\left([\overline{M}_{0,(k_0,k_1),\beta}(X,D)]^{\on{vir}}\right)\\    
 =         & (-1)^{k_0} u_{*}\left(\on{ev}_1^*(X_{\infty}\cup X_{\infty})\cap [\overline{M}_{0,2,\beta+k_1f}(\mathcal O_{P}(-X_{\infty})\oplus \mathcal O_{P}( -X_{\infty}))]^{\on{vir}}\right)\in A_*(\overline{M}_{0,2,\beta}(X)),
 \end{split}
        \end{equation}
        where
the RHS is the virtual cycle for the genus zero local Gromov--Witten theory of $\mathcal O_{P}(-X_{\infty})\oplus \mathcal O_{P}( -X_{\infty})$.

\end{corollary}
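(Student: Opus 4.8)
The plan is to deduce the identity by composing two results already established: the $n=1$ case of Theorem~\ref{thm-main-1}, and the local-orbifold correspondence with maximal contacts of \cite{BNTY} applied to the snc pair $(P, X_\infty + X_\sigma)$.

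First I would record the $n=1$ specialization of Theorem~\ref{thm-main-1}: $\pi_{\on{rel},*}[\overline{M}_{0,(k_0,k_1),\beta}(X,D)]^{\on{vir}}$ equals $(-1)^{k_0}u_*$ of the orbifold virtual cycle of the multi-root stack of $(P, X_\infty + X_\sigma)$ in class $\beta + k_1 f$, with contact data $(0,0)$ at the interior marking $p_0$ and $(k_1,k_1)$ at the relative marking $p_1$, the class $\on{ev}_0^*(X_\infty - D)$ being inserted at $p_0$ (recall $X_\infty - D = [X_0]$ in $H^2(P)$). Next I would check that $p_1$ carries maximal contact, so that \cite{BNTY} applies. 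Relative stable maps to $(X,D)$ produce curve classes supported on the zero section $X_0 \cong X \subset P$, which is disjoint from $X_\infty$; hence $\beta \cdot X_\infty = 0$ in $P$, and with $f \cdot X_\infty = 1$ we get $(\beta + k_1 f)\cdot X_\infty = k_1$. Since $X_\sigma \sim X_\infty$, also $(\beta + k_1 f)\cdot X_\sigma = k_1$. As $p_1$ is the only relative marking (the interior marking $p_0$ contributing $(0,0)$), the contact orders $(k_1,k_1)$ at $p_1$ are the maximal ones. Moreover $X_\infty$ and $X_\sigma$ are nef on $P$ because $D$ is nef on $X$, so the hypotheses of \cite{BNTY} are satisfied.

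Then I would apply \cite{BNTY}: genus zero orbifold Gromov--Witten cycles of the multi-root stack of $(P, X_\infty + X_\sigma)$ with maximal contact along both components coincide with genus zero local Gromov--Witten cycles of $\mathcal O_P(-X_\infty) \oplus \mathcal O_P(-X_\sigma)$, the cohomology weight at the maximal-contact marking $p_1$ being modified by cupping with $c_1(\mathcal O_P(X_\infty)) \cup c_1(\mathcal O_P(X_\sigma))$, while the interior marking $p_0$ and its insertion are left untouched. Replacing $\mathcal O_P(-X_\sigma)$ by $\mathcal O_P(-X_\infty)$ and $\on{ev}_1^*(X_\infty \cup X_\sigma)$ by $\on{ev}_1^*(X_\infty \cup X_\infty)$ via $X_\sigma \sim X_\infty$ then produces the asserted formula (carrying along $\on{ev}_0^*(X_\infty - D)$ at $p_0$).

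The only point requiring care is the sign, which is where I expect the bookkeeping to be slightly subtle. Theorem~\ref{thm-main-1} contributes $(-1)^{k_0}$. The local-orbifold correspondence, performed one divisor at a time via the product formula for multi-root stacks, contributes for each divisor a sign $(-1)^{m+1}$, where $m$ is the intersection of the curve class with that divisor (its maximal contact), exactly as in the rank-one identity~\eqref{iden-local-rel-1}; here $m = k_1$ for both $X_\infty$ and $X_\sigma$, so the combined contribution is $(-1)^{(k_1+1)+(k_1+1)} = 1$. The total sign is therefore $(-1)^{k_0}$, as stated. The $p_0$-state-space matching is exactly as in the remark following Theorem~\ref{thm-main-1} (the insertion at $p_0$ is pulled back from $H^*(X)$, cupped with $X_\infty - D$, and passes through the local-orbifold correspondence unchanged); beyond this the argument is a formal composition of the two cited theorems, so I anticipate no substantive obstacle.
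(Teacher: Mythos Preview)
Your proposal is correct and follows exactly the paper's approach: specialize Theorem~\ref{thm-main-1} to $n=1$, observe that the single relative marking then carries maximal contact along both $X_\infty$ and $X_\sigma$, and apply the local--orbifold correspondence of \cite{BNTY}. You supply more detail than the paper does (the intersection-number check for maximality, the nefness of $X_\infty$, and the sign bookkeeping), but the logical skeleton is identical.
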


Let $\gamma_0,\gamma_1\in H^*(X)$ and $k_0, k_1\in \mathbb Z_{>0}$, such that $k_0+ k_1=D\cdot \beta$, then we can write Corollary \ref{cor-2-pt} on the level of invariants
   \begin{equation} 
   \begin{split}
    \langle [\iota^*\gamma_0]_{k_0},[\iota^*\gamma_1]_{k_1}\rangle_{0,2,\beta}^{(X,D)}=(-1)^{k_0}\langle [\gamma_0\cup (X_\infty-\pi^*D)]_0, [\gamma_{1}\cup X_{\infty}\cup X_{\infty}]_{0}\rangle_{0,2,\beta+k_1f}^{\mathcal O_{P}(-X_{\infty})\oplus \mathcal O_{P}( -X_{\infty})},
    \end{split}
    \end{equation}
      where $\gamma_0$ and $\gamma_1$ are considered as classes in $H^*(P)$ via the pullback induced by the projection $\pi: P\rightarrow X$.

%\section{Relative invariants with primitive insertions in terms of relative invariants with ambient insertions}

\section{Computing relative Gromov--Witten invariants}\label{sec: computation}
In this section, we use the special case of Theorem \ref{thm-main-1} in Corollary \ref{cor-2-pt} to explain how it simplifies the computation of relative Gromov--Witten invariants.

In \cite{You22}*{Definition 1.1} (see also \cite{GRZ} and \cite{Wang}), the proper Landau--Ginzburg potential  of a smooth log Calabi--Yau pair $(X,D)$ is written as a generating function of two-point relative Gromov--Witten invariants. When $D$ is nef, this generating function of two-point invariants is computed in \cite{You22} via the relative mirror theorem of \cite{FTY}. One can see from \cite{You22}*{Section 4 and Section 5} that the computation of two-point invariants is quite complicated. By Corollary \ref{cor-2-pt}, these invariants are local invariants of a rank $2$ bundle over $P$. These local invariants can be computed via mirror symmetry. 

\subsection{Identifying the invariants}

We further specialize to the case of two-point relative invariants of a smooth log Calabi--Yau pair $(X,D)$ with  insertions $[\on{pt}]_n$ and $[1]_1$:
\[
\langle [\on{pt}]_{n}, [1]_{1}\rangle_{0,2,\beta}^{(X,D)},
\]
where $D\cdot \beta=n+1$.  After multiplying these invariants by $n$, we have the terms in the theta function $\vartheta_1$ in \cite{GRZ}, \cite{You22} and \cite{Wang}:
\[
\vartheta_1:=x^{-1}+x^{-1}\sum_{n=1}^\infty\sum_{\beta:D\cdot\beta=n+1} n\langle [\on{pt}]_{n}, [1]_{1}\rangle_{0,2,\beta}^{(X,D)}q^\beta.
\]

We first need to prove the following identity which was proved in \cite{GRZ}*{Theorem 5.4} in dimension $2$ which is a generalization of \cite{CC}*{Theorem 6.6}.
\begin{lemma}
    For a smooth log Calabi--Yau pair $(X,D)$, the following identity holds.
    \[
n\langle [\on{pt}]_{n}, [1]_{1}\rangle_{0,2,\beta}^{(X,D)}=\frac{1}{n}\langle [1]_{n},[\on{pt}]_{1}\rangle_{0,2,\beta}^{(X,D)}.
\]
\end{lemma}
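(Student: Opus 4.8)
The plan is to prove the identity
\[
n\langle [\on{pt}]_{n}, [1]_{1}\rangle_{0,2,\beta}^{(X,D)}=\frac{1}{n}\langle [1]_{n},[\on{pt}]_{1}\rangle_{0,2,\beta}^{(X,D)}
\]
by relating both sides, via the relative divisor equation (rigidification), to a common one-pointed invariant, or alternatively by a direct degeneration to the normal cone. Since $(X,D)$ is log Calabi--Yau, $D$ is anticanonical, so $D\cdot\beta=-K_X\cdot\beta=n+1$ and the virtual dimension bookkeeping for two-point invariants with contact orders $(n,1)$ matches on both sides (the insertion $[\on{pt}]$ at the contact-$n$ marking on the left corresponds dimension-wise to $[\on{pt}]$ at the contact-$1$ marking on the right). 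First I would set up the comparison: both invariants are integrals over $\overline{M}_{0,2,\beta}(X,D)$ of the product of evaluation classes $\on{ev}_i^*([\on{pt}])$ at one marking with the identity at the other. The identity insertion signals that the corresponding marking can be forgotten, up to correction terms, which is where the factors of $n$ and $1/n$ must come from.

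The cleanest route I expect is the relative divisor/string-type manipulation: an interior marking carrying $[1]$ in a relative invariant is governed by a relative forgetful map, and forgetting it produces a one-point relative invariant $\langle [\on{pt}]_{n+1}\rangle_{0,1,\beta}^{(X,D)}$ with maximal contact $n+1=D\cdot\beta$ — but \emph{only} after accounting for the combinatorial factor by which the contact order is redistributed. Concretely, when one forgets the contact-$1$ marking $p_1$ carrying $[1]$, the contact orders must sum to $D\cdot\beta$, so the remaining marking's contact jumps from $n$ to $n+1$; the multiplicity with which this happens is exactly the quotient of contact orders (this is the standard ``rubber''/contact-order weight appearing in e.g. the relative divisor equation of \cite{FWY}). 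I would compute that forgetting $p_1$ from the left-hand configuration contributes a factor proportional to $1/n$ (or $n$, depending on normalization conventions for orbifold/root-stack markings), and similarly forgetting the $[1]$-marking from the right-hand configuration contributes the reciprocal factor, so that both $n\langle [\on{pt}]_{n}, [1]_{1}\rangle^{(X,D)}_{0,2,\beta}$ and $\frac{1}{n}\langle [1]_{n},[\on{pt}]_{1}\rangle^{(X,D)}_{0,2,\beta}$ equal the same multiple of $\langle [\on{pt}]_{n+1}\rangle_{0,1,\beta}^{(X,D)}$.

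If the forgetful-map argument is delicate (the relative divisor equation has genuine subtleties when the forgotten marking has nonzero contact, which is the case for the $[1]_1$ marking), I would instead degenerate $X$ to the normal cone of $D$, i.e.\ $X\rightsquigarrow X\cup_D Y$ with $Y=\mathbb P(N_{D/X}\oplus\mathcal O_D)$, and apply the degeneration formula to both sides. On the $Y$ side one gets invariants of $(Y, D_0+D_\infty)$ with fiber classes, which are explicit; the $[1]$ insertion combined with the point insertion forces most of the graph sum to collapse, leaving a single term whose $Y$-contribution is a multiple-cover/fiber integral evaluating to the relevant power of $n$. Comparing the two degeneration expansions term by term yields the identity. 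I expect the \textbf{main obstacle} to be getting the combinatorial weight exactly right — tracking how the $1/n$ and $n$ prefactors interact with the automorphism factors $\prod\eta^i_0/|\Aut(\Gamma)|$ in the degeneration formula (or with the contact-order weights in the relative divisor equation), since an off-by-$n$ error is precisely what the statement is asserting does \emph{not} occur. A useful consistency check will be to verify the identity against the closed formula for $\vartheta_1$ in Theorem \ref{thm-lg-potential}, where both normalizations must produce the same generating function $x^{-1}\exp(g(y(q)))$.
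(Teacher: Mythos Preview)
Your proposal does not match the paper's argument, and more importantly, neither of your suggested routes actually goes through as stated.

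Your primary approach --- forgetting the $[1]$-carrying relative marking to reduce both sides to a common one-point invariant $\langle [\on{pt}]_{n+1}\rangle_{0,1,\beta}^{(X,D)}$ --- fails for two concrete reasons. First, there is no forgetful map in relative Gromov--Witten theory that removes a marking of \emph{positive} contact order: the contact profile is part of the discrete data defining the moduli space, and there is no morphism from the space with profile $(n,1)$ to the space with profile $(n+1)$ that behaves like a universal curve. The ``relative divisor equation'' you allude to in \cite{FWY} applies to markings with contact order zero (interior markings) carrying a divisor class, not to relative markings carrying $[1]$. Second, even the target invariant does not make sense: for a log Calabi--Yau pair the virtual dimension of $\overline{M}_{0,1,\beta}(X,D)$ with maximal contact is $\dim X-2$, while the point class of $D$ has real degree $2(\dim X-1)$, so $\langle [\on{pt}]_{n+1}\rangle_{0,1,\beta}^{(X,D)}$ vanishes for dimension reasons whenever $\dim X>1$. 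Your backup degeneration idea is too schematic to evaluate; it is not clear which graphs survive or why the answer comes out as $n^2$ rather than some other weight.

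The paper proves the lemma by an entirely different mechanism: the WDVV equation in the relative quantum cohomology of \cite{FWY}. Citing \cite{You22}*{Proposition 3.7 \& Remark 3.8}, one has a recursion among the numbers $N_{k,p}^\beta:=\langle [\on{pt}]_k,[1]_p\rangle_{0,2,\beta}^{(X,D)}$ of the form
\[
(k+1)N_{k+1,p}^\beta+(k+p)N_{k+p,1}^\beta+\cdots = kN_{k,p+1}^\beta+\cdots,
\]
with quadratic correction terms. Specializing to $k=n,n-1,\ldots,1$ with $k+p=n$ fixed and summing produces a telescoping cancellation on the linear terms, leaving $N_{1,n}^\beta=n^2N_{n,1}^\beta$; the two triple sums of quadratic terms cancel against each other. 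This is exactly the identity claimed. The argument follows the pattern of \cite{GRZ}*{Theorem 5.4}. No forgetful map or degeneration is used.
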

\begin{proof}
    This is a result of the WDVV equation for the relative Gromov--Witten theory of $(X,D)$ \cite{FWY}*{Proposition 7.5}. The proof is similar to that of \cite{GRZ}*{Theorem 5.4}. Set
    \[
    N_{k,p}^\beta:=\langle [\on{pt}]_{k},[1]_{p}\rangle_{0,2,\beta}^{(X,D)}.
    \]
    By \cite{You22}*{Proposition 3.7 \& Remark 3.8}, the WDVV equation implies the following identity:
    \begin{equation}\label{iden-wdvv}
    \begin{split}
&(k+1)N_{k+1,p}^\beta+(k+p)N_{k+p,1}^\beta+\sum_{a,b>0, a+b=k}\sum_{\beta_1+\beta_2=\beta}abN_{a,1}^{\beta_1} N_{b,p}^{\beta_2}\\
=& kN_{k,p+1}^\beta+ \sum_{r=1}^{p-1}(p-r)\sum_{\beta_1+\beta_2=\beta}N_{p-r,1}^{\beta_1}kN_{k,r}^{\beta_2},
\end{split}
\end{equation}
where $D\cdot \beta=k+p+1$. Set $n:=k+p$, we can rewrite (\ref{iden-wdvv}) as
\begin{equation*}
    \begin{split}
        &kN_{k,n-k+1}\\
        =&(k+1)N_{k+1,n-k}^\beta+nN_{n,1}^\beta+\sum_{a,b>0, a+b=k}\sum_{\beta_1+\beta_2=\beta}abN_{a,1}^{\beta_1} N_{b,n-k}^{\beta_2}-\sum_{r=1}^{n-k-1}(n-k-r)\sum_{\beta_1+\beta_2=\beta}N_{n-k-r,1}^{\beta_1}kN_{k,r}^{\beta_2}.
    \end{split}
\end{equation*}

Following the proof of \cite{GRZ}*{Theorem 5.4}, we specialize this equation to $k=n,,n-1,1$, we have
\[
nN_{n,1}=nN_{n,1},
\]
\[
(n-1)N_{n-1,2}=nN_{n,1}+nN_{n,1}+\ldots,
\]
\[
\vdots
\]
\[
N_{1,n}=2N_{2,n-1}+nN_{n,1}+\ldots.
\]
Summing over these $n$ identities and subtracting $\sum_{k=2}^n k N_{k,n-k+1}$ to both sides, we have
\[
N_{1,n}=n^2N_{n,1}+\sum_{k=1}^n\sum_{a,b>0, a+b=k}\sum_{\beta_1+\beta_2=\beta}abN_{a,1}^{\beta_1} N_{b,n-k}^{\beta_2}-\sum_{k=1}^n\sum_{r=1}^{n-k-1}(n-k-r)\sum_{\beta_1+\beta_2=\beta}N_{n-k-r,1}^{\beta_1}kN_{k,r}^{\beta_2}.
\]
Observe that the two triple sums cancel with each other. Therefore, we have
\[
N_{1,n}=n^2N_{n,1}.
\]
We conclude that
\[
n\langle [\on{pt}]_{n}, [1]_{1}\rangle_{0,2,\beta}^{(X,D)}=\frac{1}{n}\langle [1]_{n},[\on{pt}]_{1}\rangle_{0,2,\beta}^{(X,D)}.
\]
\end{proof}

We assume that $D$ is nef, then we have
\begin{align*}
    n\langle [\on{pt}]_{n}, [1]_{1}\rangle_{0,2,\beta}^{(X,D)}&=\frac{1}{n}\langle [1]_{n},[\on{pt}]_{1}\rangle_{0,2,\beta}^{(X,D)}\\
    &=(-1)^{n}\frac{1}{n}\langle [X_{\infty}-\pi^*D],[\on{pt}]\rangle_{0,2,\beta+f}^{\mathcal O_{P}(-X_{\infty})\oplus \mathcal O_{P}( -X_{\infty})}\\
    &=(-1)^{n+1}\langle [\on{pt}]\rangle_{0,1,\beta+f}^{\mathcal O_{P}(-X_{\infty})\oplus \mathcal O_{P}( -X_{\infty})}.
\end{align*}

The computation of these two-point relative invariants are now reduced to the computation of one-point local invariants of $\mathcal O_{P}(-X_{\infty})\oplus \mathcal O_{P}( -X_{\infty})$.

\subsection{Mirror theorem for toric bundles}

The bundle 
\[
E:=\mathcal O_{P}(-X_{\infty})\oplus \mathcal O_{P}( -X_{\infty})
\]
is a toric bundle over $X$. By the mirror theorem for toric bundles \cite{Brown}, $1$-point genus zero invariants of $\mathcal O_{P}(-X_{\infty})\oplus \mathcal O_{P}( -X_{\infty})$ can be computed in terms of genus zero invariants of $X$ and combinatorial data. 

\begin{remark}
    As the mirror theorem of \cite{Brown} is stated for compact targets, which is not the case here. To be more precise, we can either use the mirror theorem of \cite{JTY} for toric stack bundles (a generalization of \cite{CCIT15}) or consider the twisted $I$-function of $P$ by the bundle $E$.
\end{remark}

Recall that the small $J$-function for absolute Gromov--Witten theory of $E$ is
\[
J_{E}(q,z)=e^{\sum_{i=0}^r p_i\log q_i/z}\left(z+\sum_{ \beta\in \on{NE(X)}\setminus\{0\},n\geq 0}\sum_{\alpha}q^{\beta}q_0^n\left\langle \frac{\phi_{P,\alpha}}{z-\psi}\right\rangle_{0,1, \beta+nf}^{E}\phi_{P}^{\alpha}\right),
\]
where $\{p_i\}_{i=1}^r$ is an integer, nef basis of $H^2(X)$; $\{p_i\}_{i=0}^r$ is an integer, nef basis of $H^2(P)$; $\tau_{0,2}=\sum_{i=0}^r p_i \log q_i\in H^2(P)$; $\{\phi_{P,\alpha}\}$ is a basis of $H^*(P)$; $\{\phi_{P}^\alpha\}$ is the dual basis under the Poincar\'e pairing.

Now, we consider the $I$-function of $E$. We choose

\[
p_0:=h:=c_1(\mathcal O_{P}(1)).
\]
The $I$-function of $E$ is
\begin{align}
I_{E}(y,z)= \sum_{\beta\in \on{NE}(X),n\geq 0} J_{X, \beta}(q,z)y^{\beta}y_0^n\frac{\prod_{a\leq 0}(h+az)}{\prod_{a\leq n}(h+az)}\frac{\prod_{a\leq 0}(h-D+az)}{\prod_{a\leq n-D\cdot \beta}(h-D+az)}\frac{\prod_{a\leq 0}(-h+az)^2}{\prod_{a\leq -n}(-h+az)^2},
\end{align}
where $J_{X, \beta}(q,z)$ is the degree $\beta$-part of the $J$-function of $X$. That is,
\begin{align*}
J_{X}(q,z)& =\sum_{\beta} J_{X, \beta}(q,z)q^\beta\\
&=e^{\sum_{i=1}^r p_i\log q_i/z}\left(z+\sum_{ \beta\in \on{NE(X)}\setminus\{0\}}\sum_{\alpha}q^{\beta}\left\langle \frac{\phi_{X,\alpha}}{z-\psi}\right\rangle_{0,1, \beta}^{X}\phi_X^{\alpha}\right).
\end{align*}

%\textcolor{red}{Maybe should add the equivariant parameter here.}

We assume that $(X,D)$ is a log Calabi--Yau pair. Therefore, $E$ is local Calabi--Yau. The mirror theorem of \cite{Brown} implies that
\[
J_E(q(y),z)=I_E(y,z),
\]
where the mirror map is given by the coefficient of $z^0$. We compute that
\[
\sum_{i=0}^{r} p_i\log q_i=\sum_{i=1}^r p_i\log y_i+p_0\log y_0+\sum_{\substack{\beta\in \on{NE}(X)\\ D\cdot\beta\geq 2}} \langle [\on{pt}]\psi^{D\cdot \beta-2}\rangle_{0,1,\beta}^Xy^\beta(-1)^{D\cdot\beta-1}(D\cdot\beta-1)!(h-D).
\]

%\textcolor{red}{Check}

Therefore, the mirror map is given by
\[
\sum_{i=1}^{r} p_i\log q_i=\sum_{i=1}^r p_i\log y_i-g(-y)D,
\]
and
\[
p_0\log q_0=p_0\log y_0+g(-y)p_0,
\]
where
\[
g(-y)=\sum_{\substack{\beta\in \on{NE}(X)\\ D\cdot\beta\geq 2}} \langle [\on{pt}]\psi^{D\cdot \beta-2}\rangle_{0,1,\beta}^Xy^\beta(-1)^{D\cdot\beta-1}(D\cdot\beta-1)!.
\]
Therefore, 
\begin{align}\label{mirror-map}
q^\beta=\exp(g(-y)D\cdot\beta)y^\beta
\end{align}
and
\begin{align}\label{mirror-map-0}
q_0=y_0\exp\left(-g(-y)\right).
\end{align}

\subsection{Computation via the mirror theorem}
We would like to extract the coefficient of $\frac{1}{z}$ that takes value in $h^2$. The coefficient for the $J$-function is
\[
\sum_{ \beta+nf\in \on{NE(E)}\setminus\{0\},n\geq 0} q^\beta q_0^n\langle [\on{pt}]\rangle_{0,1,\beta+nf}^E,
\]
where $[\on{pt}]\in H^*(X)$ is the point-class of $H^*(X)$. We further take the coefficient of $q_0$,
%\textcolor{red}{Take the coefficient of $q_0^1$ (or $y_0^1$)? Maybe we can compute all the theta functions $\vartheta_k$ this way and check their relations? We will also need Lemma 4.1 for....}
then we have
\[
q_0+\sum_{ \beta\in \on{NE(X)}\setminus\{0\}} q^\beta q_0\langle [\on{pt}]\rangle_{0,1,\beta+f}^E,
\]
where the term $q_0$ comes from the contribution from $\beta=0$.

Now we consider the corresponding coefficient of the $I$-function. By the change of the variables (\ref{mirror-map}) and (\ref{mirror-map-0}), the coefficient of $q_0$ of the $J$-function is exactly the coefficient of $y_0$ of the $I$-function under the change of variables $q_0=y_0\exp\left(-g(-y)\right)$. Therefore, we will extract the coefficient of $\frac{y_0}{z}h^2$ of the $I$-function.

For $n=1$, that is, the coefficient of $y_0^1$, we have 
\[
\frac{\prod_{a\leq 0}(h+az)}{\prod_{a\leq 1}(h+az)}=\frac{1}{h+z},
\]
and 
\[
\frac{\prod_{a\leq 0}(-h+az)^2}{\prod_{a\leq -1}(-h+az)^2}=h^2.
\]
Therefore, to extract the coefficient of $\frac{y_0}{z}h^2$, we just need to extract the coefficient of $\frac{1}{z}$ that takes value in $[1]\in H^*(X)$ of
\[
\sum_{\beta\in \on{NE}(X)} J_{X, \beta}(q,z)y^{\beta}\frac{1}{h+z}\frac{\prod_{a\leq 0}(h-D+az)}{\prod_{a\leq 1-D\cdot \beta}(h-D+az)}.
\]
For $\beta=0$, the coefficient is $1$. For $\beta\neq 0$, 
\[
J_{X, \beta}(q,z)=\sum_{ \beta\in \on{NE(X)}\setminus\{0\}}\sum_{\alpha}q^{\beta}\left\langle \frac{\phi_{X,\alpha}}{z-\psi}\right\rangle_{0,1, \beta}^{X}\phi_X^{\alpha}.
\]
For $\phi_X^{\alpha}=[1]$, we have $D\cdot \beta\geq 2$, and
\[
\left\langle \frac{\phi_{X,\alpha}}{z-\psi}\right\rangle_{0,1, \beta}^{X}=\langle  [\on{pt}]\psi^{D\cdot\beta-2}\rangle_{0,1,\beta}^{X}\frac{1}{z^{D\cdot\beta-1}}.
\]
We also have
\[
\frac{\prod_{a\leq 0}(h-D+az)}{\prod_{a\leq 1-D\cdot \beta}(h-D+az)}=(-1)^{D\cdot\beta-1}\prod_{0\leq a\leq D\cdot \beta-2}(D-h+az)
\]
and
\[
\frac{1}{h+z}=\frac{1}{z}\sum_{k\geq 0} \frac{(-h)^k}{z^k}.
\]
This shows that there is no coefficient of $\frac{1}{z}$ that takes value in $[1]\in H^*(X)$ when $\beta\neq 0$. Therefore, the coefficient here is just $1$. 

%\textcolor{red}{check....}

By the mirror theorem, we have
\[
q_0+\sum_{ \beta\in \on{NE(X)}\setminus\{0\}} q^\beta q_0\langle [\on{pt}]\rangle_{0,1,\beta+f}^E=y_0.
\]
Therefore,
\begin{equation}
    \begin{split}
        &1+\sum_{ \beta\in \on{NE(X)}\setminus\{0\}} q^\beta \langle [\on{pt}]\rangle_{0,1,\beta+f}^E\\
        =&y_0/q_0\\
        =&\exp\left(-g(-y)\right)\\
        =&\exp\left(\sum_{\substack{\beta\in \on{NE}(X)\\ D\cdot\beta\geq 2}} \langle [\on{pt}]\psi^{D\cdot \beta-2}\rangle_{0,1,\beta}^Xy^\beta(-1)^{D\cdot\beta}(D\cdot\beta-1)!\right).
    \end{split}
\end{equation}

After a change of variable $y\mapsto -y$, we have
\begin{align*}
&x^{-1}\vartheta_1\\
=&1+\sum_{n}\sum_{\beta: D\cdot\beta=n+1} n\langle [\on{pt}]_{1}, [1]_{n}\rangle_{0,\beta,2}^{(X,D)}q^\beta\\
=&1+\sum_{n}\sum_{\beta: D\cdot\beta=n+1}(-1)^{n+1}\langle [\on{pt}]\rangle_{0,1,\beta+f}^{\mathcal O_{P_1}(-X_{\infty,1})\oplus \mathcal O_{P_1}( -X_{\infty,1})}q^\beta\\
=& \exp\left(g(y(q))\right),
\end{align*}
where $y(q)$ is the inverse of the mirror map.
We recover the computation of the theta function $\vartheta_1$ in \cite{You22}*{Theorem 5.1} (=Theorem \ref{thm-lg-potential}.)

\section{Proof of Theorem \ref{thm-local-orb}}\label{sec:proof-local-orb}

 The proof of Theorem \ref{thm-local-orb} is almost identical to the proof of Theorem \ref{thm-main-1}. We now explain why the proof is still true when additional root constructions are added.

The smooth pair $(X,D)$ in Section \ref{sec:proof} is replaced by the orbifold pair $(\mathcal X_{m-1},\mathcal D_m)=(P_m,\pi^*_m D_1+\cdots+\pi^*_m D_{m-1}+X_{\infty,m}+X_{\sigma,m})$.

 \begin{itemize}
     \item[{\bf Step 1}] We consider the degeneration of $P_m: =\mathbb P(\mathcal O_{X}(-D_m)\oplus \mathcal O_X)$ as in Section \ref{sec:deg}. Then we degenerate the pair $(\mathcal P_m,\mathcal X_{\infty,m}+\mathcal X_{\sigma,m})$ into
\[
(\mathcal X_{m-1}\times \mathbb P^1, (\mathcal D_{m}\times \mathbb P^1)\cup (\mathcal X_{m-1}\times \infty)\cup (\mathcal X_{m-1}\times \on{pt})) \text{ and } (\mathcal P_{Y_m}, (\mathcal D_m\times \mathbb P^1)\cup \mathcal Y_{\infty,m}\cup \mathcal Y_{\sigma,m}),
\]
where the notation is similar to Section \ref{sec:proof}:
\begin{itemize}
\item $Y_m:=\mathbb P(N_{D_m/X}\oplus \mathcal O_{D_m})=\mathbb P(\mathcal O_{D_m}(-D_m)\oplus \mathcal O_{D_m})$. 
    \item $D_{\infty,m}$ is the $\infty$-section of $Y_m$.
    \item $P_{Y_m}:=\mathbb P(\mathcal O_Y(-D_{\infty,m})\oplus \mathcal O)$.
    \item $Y_{\infty,m}$ and $Y_{\sigma,m}$ are infinity and $\sigma$-divisors of $P_{Y_m}$ as before.
    \item $\mathcal P$, $\mathcal D$, $\mathcal X$, $\mathcal Y$'s are obtained from $P$, $D$, $X$, $Y$ by adding root constructions along (the pullback of) $D_1+\cdots+D_{m-1}$ respectively.
\end{itemize}
\item[{\bf Step 2}] The degeneration formula of \cite{AF}*{Proposition 5.9.1} can still be applied for this degeneration. We just need to have additional labeling for ages along $D_i$ for $i=1,\ldots,m-1$ to record the additional orbifold structure. Lemma \ref{lemma-mid-age} still holds when we add root structures to $D_i$. So, no edges with mid-ages with respect to $X\times \infty, X\times \on{pt}, Y_\infty$ or  $Y_\sigma$ will appear in the degeneration formula.

\item[{\bf Step 3}] For virtual cycles on the $(X\times \mathbb P^1)$-side. The vanishing result of Lemma \ref{lemma-X-P-vanish} is along the $\mathbb P^1$-factor, so it is not affected by additional root constructions along $D_i$. On the $P_Y$-side, Lemma \ref{lemma-FTY-orb} is already stated in the orbifold setting. The vertex contribution in the localization formula used in Lemma \ref{lemma-rubber} is given in \cite{TY20c}*{Section 2.2.2} for orbifolds.

\item[{\bf Step 4}] Edges of the degeneration graph. Lemma \ref{lemma-edge} is still true, because the total contact orders with $X\times \infty$ and $X\times \on{pt}$ are still the same. Similarly for $Y_\infty$ and $Y_\sigma$. Again, there is no fiber class of the trivial fibration $\mathcal X_{m-1}\times \mathbb P^1\rightarrow \mathcal X_{m-1}$ and Corollary \ref{cor-no-fiber} is true.

\item[{\bf Step 5}] Vertices with $p_i$ attached.  We can still apply the orbifold quantum Lefschetz principle of \cite{Tseng}*{proposition 5.2.3} because the divisor $\mathcal D_m$ is pulled back from $D_m\subset X$, so $\mathcal O_{\mathcal X_{m-1}}(\mathcal D_m)$ and $\mathcal O_{\mathcal P_{Y,m}}(Y_\infty)$ are still convex. Therefore, Lemma \ref{lemma-p-i-p-0} is true. Lemma \ref{lemma-P-Y-vanish} uses the virtual localization computation in \cite{TY20c}*{Section 2.2.2} which was already stated for general orbifolds. So the conclusion of Section \ref{sec:vertices-p-i} remains unchanged.

\item[{\bf Step 6}] The analysis of other vertices is the same as Section \ref{sec:other-vertices}, except that we will need to use the orbifold version of the local-relative correspondence \cite{vGGR} which was proved in \cite{BNTY}*{Section 1.3}.

\item[{\bf Step 7}] From the previous steps, we can draw the same conclusion for the degeneration graph as Proposition \ref{prop-deg-graph}. Then the rest follows.

 \end{itemize}

\bibliographystyle{amsxport}
\bibliography{main}

\end{document}